\date{}
\DeclareSymbolFont{eulargesymbols}{U}{zeuex}{m}{n}
\DeclareMathSymbol{\intop}{\mathop}{eulargesymbols}{"52}
\newcommand{\acr}[1]{\textsmaller{#1}}
\theoremstyle{plain}
\newtheorem{theo} {Theorem}[section]
\newtheorem{lem}  [theo]{Lemma}
\newtheorem{prop} [theo]{Proposition}
\theoremstyle{definition}
\newtheorem{defi}  [theo]{Definition}
\newtheorem{rem}   [theo]{Remark}
\newtheorem{expl}  [theo]{Example}
\newtheorem{constr}[theo]{Construction}
\newtheorem{nota}  [theo]{Notation}
\newtheorem{setting}[theo]{Setting}
\newenvironment{addr}{
~\\[-10px]
\begingroup
\scriptsize
\begin{flushright}
}{
\end{flushright}
\endgroup
}
\newcommand{\auth}[5]{
\begin{minipage}[t]{#1}
\scriptsize
{\footnotesize\textbf{#2}}\\[3px]
\scriptsize #4\\[3px]
\texttt{\href{mailto:#5}{#5}}
\end{minipage}
}
\newlength{\fixmidfigure}
\newenvironment{tfigure}{
  \setlength{\fixmidfigure}{\lastskip}\addvspace{-\lastskip}
  \begin{figure}[t]
  }{
  \end{figure}
  \addvspace{\fixmidfigure}
}
\let\setminus\smallsetminus
\let\backslash\smallsetminus
\let\le\leqslant
\let\ge\geqslant
\let\leq\leqslant
\let\geq\geqslant
\let\emptyset\varnothing
\let\phi\varphi
\let\epsilon\varepsilon
\let\theta\vartheta
\newcommand{\on}     [1]{\text{\textup{#1}}}
\newcommand{\ul}     [1]{\underline{#1}}
\newcommand{\quot}   [2]{\left.\raise0.5ex\hbox{$#1$} \right/\hspace*{-2px}\lower0.5ex\hbox{$#2$}}
\newcommand{\pa}[1]{\mathopen{}\left(#1\right)\mathclose{}}     %
\newcommand{\abs}[1]{\lvert #1\rvert}                           %
\newcommand{\set}[1]{\mathopen{}\left\{#1\right\}\mathclose{}}  %
\newcommand{\N}{\mathbb{N}}     %
\newcommand{\Z}{\mathbb{Z}}     %
\newcommand{\tZ}{\tilde{Z}}  %
\newcommand{\bbD}{\mathbb{D}}
\newcommand{\bbI}{\mathbb{I}}
\newcommand{\bbO}{\mathbb{O}}
\newcommand{\bbP}{\mathbb{P}}
\newcommand{\bbQ}{\mathbb{Q}}
\newcommand{\bbZ}{\mathbb{Z}}
\newcommand{\bZ}{\mathbb{Z}}
\newcommand{\caS}{\mathcal{S}}
\newcommand{\scA}{\mathscr{A}}
\newcommand{\scB}{\mathscr{B}}
\newcommand{\scD}{\mathscr{D}}
\newcommand{\scG}{\mathscr{G}}
\newcommand{\scM}{\mathscr{M}}
\newcommand{\scO}{\mathscr{O}}
\newcommand{\scP}{\mathscr{P}}
\newcommand{\scQ}{\mathscr{Q}}
\newcommand{\bfC}{\mathbf{C}}
\newcommand{\bfI}{\mathbf{I}}
\newcommand{\bfM}{\mathbf{M}}
\bfq\renewcommand{\bfq}{\mathbf{q}}\else \newcommand{\bfq}{\mathbf{q}}\fi
\newcommand{\frC}{\mathfrak{C}}
\newcommand{\frH}{\mathfrak{H}}
\newcommand{\frI}{\mathfrak{I}}
\newcommand{\frM}{\mathfrak{M}}
\newcommand{\frS}{\mathfrak{S}}
\newcommand{\frp}{\mathfrak{p}}
\newcommand{\frv}{\mathfrak{v}}
\newcommand{\ob}{\on{ob}}       %
\newcommand{\tref}[2]{#1~\ref{#2}}
\newcommand{\iref}[2]{#1~\ref{#2}}
\newcommand{\qq}{\sslash}
\newcommand{\fS}{\mathfrak{S}} %
\newcommand{\bm}[1]{\mathbold{#1}}
\newcommand{\fg}{\mathfrak{g}} %
\newcommand{\one}{\mathbb{1}} %
\newcommand{\del}{\partial}
\renewcommand{\phi}{\varphi}
\renewcommand{\epsilon}{\varepsilon}
\DeclareMathOperator{\UConf}{C}
\DeclareMathOperator{\Diff}{Diff}
\newcommand{\BrG}{\mathrm{Br}}
\newcommand{\op}{{\operatorname{op}}}
\newcommand{\Inj}{\mathbf{Inj}}
\newcommand{\bSig}{\mathbf{\Sigma}}
\newcommand{\Top}{\mathbf{Top}}
\newcommand{\bEn}{\mathbb{1}}
\newcommand{\Th}[1]{\textsuperscript{#1}}
\newcommand{\bmG}{{\bm{G}}}
\newcommand{\bmX}{{\bm{X}}}
\newcommand*\lon{%
     \nobreak
     \mskip6mu plus1mu
     \mathpunct{}%
     \nonscript
     \mkern-\thinmuskip
     {:}%
     \mskip2mu
     \relax
}
\newcommand{\Alg}[1]{#1\text{-}\mathbf{Alg}}
\let\binom\tbinom
\definecolor{dgrey}{rgb}{.4,.4.,.4}
\definecolor{bgreen}{rgb}{.7,.9.,.6}
\definecolor{dyellow}{rgb}{.85,.7,0}
\definecolor{byellow}{rgb}{.95,.93,.4}
\definecolor{dred}{rgb}{.9,.3,.3}
\definecolor{dblue}{rgb}{.3,.3,.8}
\definecolor{bred}{rgb}{.95,.6,.6}
\definecolor{bblue}{rgb}{.5,.5,.9}
\definecolor{dgrey}{rgb}{.6,.6,.6}
\definecolor{lila}{rgb}{.8,.6,.9}
\definecolor{dgreen}{rgb}{.25,.7,.3}
\definecolor{dred}  {rgb}{.7,.3,.25}
\definecolor{bbblue}{rgb}{.7,.75,.9}
\definecolor{bgrey}{rgb}{.7,.7,.7}
\definecolor{lila}{rgb}{.8,.5,.8}
\newcommand{\new}[1]{#1}
\newcommand{\LF}{\tilde F}
\newcommand{\tB}{\tilde B}
\newcommand{\tbO}{\tilde\bbO}
\newcommand{\tbD}{\tilde\bbD}
\newcommand{\tF}{\tilde F}
\newcommand{\swr}{\kern.4px\wr\kern.4px}
\newcommand{\fComm}{\includegraphics[height=6.4pt]{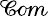}}
\title{Parametrised moduli spaces of\\surfaces as infinite loop spaces}
\author{Andrea Bianchi, Florian Kranhold,\\and Jens Reinhold}
\begin{document}

\maketitle

\begin{abstract}
  We study the $E_2$-algebra $\Lambda\frM_{*,1}\coloneqq\coprod_{g\ge0}\Lambda\frM_{g,1}$
  consisting of free loop spaces of moduli spaces of Riemann surfaces with one
  parametrised boundary component, and compute the homotopy type of the group
  completion $\Omega B\Lambda\frM_{*,1}$: it is the product of
  $\Omega^{\infty}\mathbf{MTSO}(2)$ with a certain free $\Omega^{\infty}$-space
  depending on the family of all boundary-irreducible mapping classes in all
  mapping class groups $\Gamma_{g,n}$ with $g\ge0$ and $n\ge1$.\par{}\vspace*{8px}
  \footnotesize\noindent%
  \textbf{Date.} 12\textsuperscript{th} May, 2021.
  \textbf{Last change.} \today.\\[2px]
  \textbf{Key words.}  mapping class group, moduli space, free loop space,
  infinite loop space,\\[-.5px]\hspace*{1em} mapping space, arc system,
  coloured operad, homological stability.\\[2px]
  \textbf{2020 \acr{MSC}.}
  \texttt{18M60}, %
  \texttt{55P48}, %
  \texttt{57K20}, %
  \texttt{55P47}, %
  \texttt{55P15}, %
  \texttt{55R35}, %
  \texttt{20E45}, %
  \texttt{20E22}, %
  \texttt{55P50}.\vspace*{3px}
\end{abstract}

\section{Introduction}
The Madsen–Weiss theorem \cite{MadsenWeiss} can be formulated as follows: let
$\frM_{g,1}$ denote the moduli space of Riemann surfaces of genus $g\ge 0$ with
one para\-metrised boundary curve. By \cite{miller} and \cite{cfb2}, the
collection
\[\frM_{*,1}\coloneqq\coprod_{g\ge0}\frM_{g,1}\]
admits the structure of an $E_2$-algebra, more precisely an algebra over the
little 2-discs operad $\scD_2$. Madsen and Weiss identify the group completion
$\Omega B\frM_{*,1}$ with the infinite loop space
$\Omega^{\infty}\mathbf{MTSO}(2)$, where $\mathbf{MTSO}(2)$ is the
two-dimensional oriented tangential Thom spectrum \cite{GMTW}.

One can consider the analogous problem with $\frM_{*,1}$ replaced by the mapping
space $\on{map}(X,\frM_{*,1})$. This space is again a $\scD_2$-algebra by
pointwise composition and it is our goal to understand its group completion
$\Omega B\,\on{map}(X,\frM_{*,1})$. Note that the $\scD_2$-algebra structure
extends to an algebra structure over Tillmann's surface operad built out of
$\frM_{*,1}$, so the main theorem of \cite{Tillmann00} implies the homotopy type
we wish to understand is an infinite loop space.

In this article we will focus on the simplest non-trivial case $X=S^1$, i.e.\ we
consider the free loop space $\Lambda\frM_{*,1}\coloneqq\on{map}(S^1,\frM_{*,1})$;
we will briefly discuss in the appendix the general case, which is very similar.

For any discrete group $\Gamma$, one can identify
$\Lambda B\Gamma \simeq \coprod_{[\gamma] \in \on{Conj}(\Gamma)}
BZ(\gamma,\Gamma)$, where $\on{Conj}(\Gamma)$ denotes the set of conjugacy
classes of $\Gamma$, and $Z(\gamma,\Gamma)$ is the centraliser of
$\gamma \in \Gamma$. Note also that the isomorphism type of the group
$Z(\gamma,\Gamma)$ only depends on the conjugacy class of $\gamma\in\Gamma$.
The problem we address in this paper is strongly related to analysing the
structure of centralisers of elements of mapping class groups: indeed, recall
that $\frM_{g,1}$ is a classifying space for the mapping class group
$\Gamma_{g,1}$ of a smooth oriented surface of genus $g$ with one parametrised
boundary curve; we then have a homotopy equivalence
\[\Lambda\frM_{*,1}\simeq \coprod_{g\ge 0}\Lambda B\Gamma_{g,1}
  \simeq \coprod_{g\ge 0}\coprod_{[\varphi]\in\on{Conj}(\Gamma_{g,1})}
  BZ(\varphi,\Gamma_{g,1}).\]

\paragraph{Results}
The free loop space of the moduli space of surfaces of genus $g$ with $n$
parametrised boundary circles, $\Lambda\frM_{g,n}$, admits an action by the
isometry group of the disjoint union of $n$ oriented circles, i.e.\ by
$T^n \rtimes \frS_n=(S^1)^n\rtimes \frS_n$.

We introduce an irreducibility criterion for mapping classes that is invariant
under conjugation. We then consider, for any $n\ge 1$ and $g \ge 0$, the
subspace $\mathfrak C_{g,n} \subseteq \Lambda\frM_{g,n}$ of free loops whose
corresponding conjugacy classes of elements in
$\pi_0 (\frM_{g,n}) \cong \Gamma_{g,n}$ are irreducible. The pointwise action of
$T^n \rtimes \frS_n$ on $\Lambda\frM_{g,n}$ restricts to an action on
$\mathfrak C_{g,n}$, and the main result of this work is the following
identification, where $\qq$ stands for homotopy quotient.

\begin{theo}\label{theo:main} There is a weak homotopy equivalence
  \[\Omega B\Lambda\frM_{*,1}\simeq \Omega^\infty\mathbf{MTSO}(2)
    \times \Omega^\infty\Sigma^\infty_+\coprod_{n\ge 1}
    \coprod_{g\ge 0} \mathfrak C_{g,n}  \qq (T^n \rtimes \frS_n)
    \label{eq:a}\]
\end{theo}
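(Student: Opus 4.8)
\medskip
\noindent\emph{Proof strategy.} The plan is to reduce the statement to a description of centralisers of mapping classes and then feed that into the group-completion machinery. To begin, evaluation at the basepoint of $S^1$ is a map of $\scD_2$-algebras $\Lambda\frM_{*,1}\to\frM_{*,1}$, split by the inclusion of constant loops, which is again a $\scD_2$-map. By Tillmann's theorem both group completions are infinite loop spaces, so the resulting split surjection $\Omega B\Lambda\frM_{*,1}\to\Omega B\frM_{*,1}$ of grouplike $E_\infty$-spaces exhibits its target as a direct factor; by Madsen--Weiss this target is $\Omega^\infty\mathbf{MTSO}(2)$. It then remains to identify the complementary factor, i.e.\ the homotopy fibre $F$ of $\Omega B\,\on{ev}$, with $\Omega^\infty\Sigma^\infty_+\coprod_{n\ge1}\coprod_{g\ge0}\mathfrak{C}_{g,n}\qq(T^n\rtimes\frS_n)$.

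Next I would describe $\Lambda\frM_{*,1}\simeq\coprod_{g}\coprod_{[\varphi]\in\on{Conj}(\Gamma_{g,1})}BZ(\varphi,\Gamma_{g,1})$ using the conjugation-invariant irreducibility criterion. Every mapping class $\varphi$ of $\Sigma_{g,1}$ carries a canonical --- hence centraliser-invariant --- system of reducing curves, cutting along which presents $\Sigma_{g,1}$ as glued, along tubes, out of one distinguished piece containing the parametrised boundary on which $\varphi$ restricts trivially (using that $\Gamma_{g',m}$ is torsion-free for $m\ge1$) together with finitely many pieces $\Sigma_{g_i,n_i}$ on which $\varphi$ restricts to a boundary-irreducible class. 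The centraliser then breaks up as the automorphism group of this decorated gluing pattern: the irreducible pieces contribute their own centralisers inside $\Gamma_{g_i,n_i}$, the freedom to rotate and permute the circles along which pieces are attached contributes the groups $T^{n_i}\rtimes\frS_{n_i}$, and the distinguished piece contributes a factor which, after stabilising the genus, is governed by $\frM_{*,1}$ --- the factor already split off above. Taking classifying spaces, this exhibits $\Lambda\frM_{*,1}$ as built by freely gluing the blocks $\mathfrak{C}_{g,n}\qq(T^n\rtimes\frS_n)$ onto copies of $\frM_{*,1}$.

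To make this precise and extract the group-completed statement, I would encode the gluing by a coloured operad of arc systems --- a decorated variant of Tillmann's surface operad --- and resolve the algebra $\Lambda\frM_{*,1}$ by a semisimplicial space whose $p$-simplices are arc systems realising part of the reduction pattern. Contractibility of arc complexes on bordered surfaces, together with a Nielsen--Thurston uniqueness statement for the canonical reduction system, makes this resolution highly connected, so on homology with $\pi_0$ inverted it identifies $\Lambda\frM_{*,1}$ with a bar construction assembling the genus-building $\frM_{*,1}$-part with the freely glued irreducible blocks. Group completing and using Tillmann's infinite loop structure once more, the $\frM_{*,1}$-part contributes precisely the factor $\Omega^\infty\mathbf{MTSO}(2)$, while --- because the space of arc systems attaching a given block is contractible --- a group-completion argument identifies the contribution of the blocks $\mathfrak{C}_{g,n}\qq(T^n\rtimes\frS_n)$ with the free grouplike $E_\infty$-space they generate, namely $\Omega^\infty\Sigma^\infty_+$ of their disjoint union. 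Summing over $g$ and $n$ yields $F$, and hence the theorem.

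The hard part will be the second and third steps: setting up the irreducibility criterion so that the reduction of a mapping class is simultaneously conjugation-invariant and compatible with operadic gluing, reading off exactly the homotopy quotients by $T^n\rtimes\frS_n$ as the automorphisms of a gluing, and proving that the arc-system resolution is connected enough for the group-completion comparison to go through.
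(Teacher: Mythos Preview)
Your centraliser decomposition via boundary-irreducible pieces is correct and essentially matches the paper's Sections~\ref{sec:arcSystems}--\ref{sec:centralisers}, with two caveats: the paper proves uniqueness of its cut locus directly from the fixed-arc complex (Proposition~\ref{prop:cutlocus}), not via Nielsen--Thurston, which is a genuinely different reduction system and would not carve out the right white/yellow pieces; and the triviality of $\phi$ on the white region holds by construction of the cut locus, not as a consequence of torsion-freeness.

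The real divergence is in how the splitting is extracted. The paper does not run a semisimplicial arc-system resolution. Instead it defines an $\N_{\ge1}$-coloured surface operad $\scM$ containing a suboperad $\scR$ built from the groups $R_n=T^n\rtimes\frS_n$, and proves directly (Theorem~\ref{theo:isfreealg}, using Proposition~\ref{prop:Zdelphistructure}) that $\Lambda\frM_{*,1}$ is the colour-$1$ part of the relatively free algebra $F^\scM_{\bm R}(\bm{\frC})$. The product decomposition then follows from a general result (Corollary~\ref{cor:maincor}), a coloured extension of the Basterra--Bobkova--Ponto--Tillmann--Yeakel framework: for any coloured operad with homological stability freely containing a compact Lie group family, the group completion of a relatively free algebra splits as the initial-algebra factor times a free infinite loop space on the homotopy orbits of the generators. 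Harer stability is what makes $\scM$ satisfy the hypothesis. Your proposed route---contractibility of arc complexes, a bar construction, then a group-completion comparison---is plausible in outline, but the step ``the space of arc systems attaching a given block is contractible, hence the blocks contribute a free $E_\infty$-factor after group completion'' is exactly where the OHS machinery does the work, and it is not clear your resolution would deliver this without effectively reproving that theorem. Your evaluation-map splitting is valid but unnecessary: both factors appear simultaneously from the OHS theorem.
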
  

The key to proving this theorem is a good understanding of mapping class groups
of surfaces (also with more than one boundary component) as well as an extension
of classical operadic techniques to a coloured setting. As a first step, we
prove a structure result for centralisers of mapping classes in $\Gamma_{g,n}$
which might be of independent interest: see
\tref{Proposition}{prop:Zdelphistructure}.

Second, we develop a machinery for $N$-coloured operads with homological
stability $\scO$ containing a suboperad $\scP$, such as a family of topological
groups: the group completion of the derived relatively free algebra
$\tF_{\scP}^{\scO}(\bm{X})$ over a $\scP$-algebra $\bm{X}$ is computed
colourwise as an infinite loop space, under suitable assumptions on $\scO$ and
$\scP$, see \tref{Theorem}{theo:splitOHS}. This part of the work is a
generalisation of \cite{Tillmann00} and \cite{BasterraEtAl} to the coloured and
relative setting.

The two ingredients are put together by proving that $\Lambda \frM_{*,1}$ is the
colour-$1$ part of a relatively free algebra over a coloured version $\scM$ of
Tillmann’s surface operad, relative to a suboperad built out of $T^n
\rtimes \frS_n$; the ‘relative generators’ are precisely the spaces
$\mathfrak C_{g,n}$ mentioned above: see \tref{Theorem}{theo:isfreealg}.

\paragraph{Related work}
One approach to studying classifying spaces of diffeomorphism groups pertains to
the notion of cobordism categories. It was pioneered with the breakthrough
theorem by Madsen and Weiss and refined by Galatius, Madsen, Tillmann, and Weiss
\cite{GMTW}.

Recall that, in the orientable setting, the cobordism category $\mathrm{Cob}_d$
is a topological category, with object space given by the union of all moduli
spaces of closed, oriented $(d-1)$-manifolds, and morphism space given by the
union of all moduli spaces of compact, oriented $d$-manifolds with incoming and
outgoing boundary.

It is natural to study two related generalisations of $\mathrm{Cob}_d$,
in an \emph{equivariant} and a \emph{parametrised} direction:
\begin{enumerate}
\item for a (topological) group $G$ we can consider the $G$-equivariant
  cobordism category $\mathrm{Cob}_d^G$: objects and morphisms are, respectively,
  $(d-1)$- and $d$-manifolds endowed with an (continuous) action of $G$ by
  orientation-preserving diffeomorphisms;
\item for a topological space $Y$ we can consider the $Y$-parametrised
  cobordism category $\mathrm{Cob}_d(Y)$: objects and morphisms are, respectively,
  orientable $(d-1)$- and $d$-manifold bundles over $Y$.
\end{enumerate}
In the case $G=\Z$ and $Y=S^1$ there is a continuous functor
$\mathrm{Cob}_d^{\Z}\to \mathrm{Cob}_d(S^1)$, given by taking mapping tori:
using that every smooth bundle over $S^1$ is induced from a diffeomorphism, this
functor is in fact a levelwise equivalence. For more general groups $G$ and
$Y = BG$, the analogous argument pertaining to $G$-actions and bundles over $BG$
can fail: see \cite{Jens} for a discussion of this phenomenon and
counterexamples in case $G = \text{SU}(2)$.

Our work can be seen as a contribution toward understanding the homotopy type of
$\mathrm{Cob}_d^{\Z}\simeq \mathrm{Cob}_d(S^1)$: gluing a pair of pants gives rise to a
map of monoids \mbox{$\Lambda\frM_{*,1}\to \mathrm{Cob}_2(S^1)|_{S^1}$}, where
$\mathrm{Cob}_2(S^1)|_{S^1}$ denotes the full subcategory of
$\mathrm{Cob}_2(S^1)$ on a single object represented by a trivial $S^1$-bundle
over $S^1$.

In the non-parametrised setting, the composition
$\frM_{*,1}\to \mathrm{Cob}_2|_{S^1}\to \mathrm{Cob}_2$ is known to induce an
equivalence after taking classifying spaces; we hope that in a similar way, the
understanding of $B\Lambda\frM_{*,1}$ can shed some light on the homotopy type
of $B\mathrm{Cob}_2^{\Z} \simeq B\mathrm{Cob}_2(S^1)$ in future work.

In the case of a finite group $G$, the homotopy type of $\mathrm{Cob}_d^{G}$ was
recently determined by Szűcs and Galatius \cite{Gergely}.  In work by Raptis and
Steimle \cite{RaptisSteimle}, parametrised cobordism categories
$\mathrm{Cob}_d(Y)$ featured as a tool to prove index theorems; however, it was
not necessary for the scopes of that work to describe the homotopy type of the
classifying spaces of these categories.  The much older work of Kreck on
bordisms of diffeomorphisms \cite{Kreck} can be seen as a description of
$\pi_0\pa{\mathrm{Cob}_d(S^1)}$.

\paragraph{Outline}
In \tref{Section}{sec:arcSystems}, we recall Alexander's method concerning arc
systems. We apply these concepts to associate with a mapping class
$\phi\in\Gamma_{g,n}$ a canonical decomposition of $\Sigma_{g,n}$ along a system
of simple closed curves, called the \emph{cut locus} of $\phi$.  The goal of
\tref{Section}{sec:centralisers} is a detailed understanding of centralisers of
mapping classes: this uses the canonical decomposition described in the previous
section in a crucial way.\looseness-1

In \tref{Section}{sec:colouredOHS}, we recall some basic definitions and
constructions related to coloured operads, and introduce the coloured surface
operad $\scM$. In \tref{Section}{sec:infiniteLSFromOHS}, we introduce the notion
of a coloured operad with homological stability and prove a levelwise splitting
result in the spirit of \cite{BasterraEtAl}, which applies in particular to
$\scM$.  Finally, in \tref{Section}{sec:LM1asFreeAlg}, we show that
$\Lambda\frM_{*,1}$ is the colour-1 part of a relatively free $\scM$-algebra,
which in combination with the splitting result concludes the proof of
\tref{Theorem}{theo:main}.

We briefly discuss in \tref{Appendix}{apx:generalX} the analogue of
\tref{Theorem}{theo:main} for a general parametrising space $X$ (see
\tref{Theorem}{theo:maingeneralised}) as well as a weak form of naturality in
$X$ of the equivalence (see \tref{Theorem}{theo:maingeneralisedfunctorial}); in
\tref{Appendix}{apx:braidSpaces}, we address two similar problems concerning
group completion of free loop spaces, related to braid groups and symmetric
groups, respectively.

\paragraph{Acknowledgements}
We are indebted to Søren Galatius for suggesting the problem treated in this
paper to the third named author, who would also like to thank Rachael Boyd and
Adva Wolf for helpful discussions that were part of a first attempt to solve
it. Moreover, we are grateful to Carl-Friedrich Bödigheimer, Søren Galatius, and
Oscar Randal-Williams for fruitful conversations.

We would also like to thank Manuel Krannich, Ulrike Tillmann, and Nathalie Wahl
for useful comments they made on a first draft. Finally, we are deeply grateful
to the anonymous referee for a detailed list of corrections on the first
version, and for suggesting a way to sensibly simplify and at the same time
generalise the discussion in \tref{Section}{sec:infiniteLSFromOHS}.

\paragraph{Funding}
Andrea Bianchi was partially supported by the \emph{Deutsche
  Forschungsgemeinschaft} (\acr{DFG}, German Research Foundation) under Germany’s
Excellence Strategy (\texttt{EXC-2047/1}, \texttt{390685813}), by the
\emph{European Research Council} under the European Union’s Seventh Framework
Programme (\texttt{ERC StG 716424 – CASe}, PI Karim Adiprasito), and by the
\emph{Danish National Research Foundation} through the \emph{Copenhagen Centre
  for Geometry and Topology} (\texttt{DNRF151}).

Florian Kranhold was supported by the \emph{Max Planck Institute for
  Mathematics} in Bonn, by the \emph{Deutsche Forschungsgemeinschaft} (\acr{DFG},
German Research Foundation) under Germany’s Excellence Strategy
(\texttt{EXC-2047/1}, \texttt{390685813}) and by the \emph{Promotionsförderung}
of the \emph{Stu\-dien\-stif\-tung des Deutschen Volkes}.

Jens Reinhold was supported by the \emph{Deutsche Forschungsgemeinschaft} (\acr{DFG},
German Research Foundation) -- \texttt{SFB 1442 427320536},
Geometry:~\emph{Deformations and Rigidity}, as well as under Germany's
Excellence Strategy \texttt{EXC-2044}, Mathematics Münster:
\emph{Dynamics – Geometry – Structure}.

\section{Arc systems and the cut locus}
\label{sec:arcSystems}
The aim of this and the next section is to study centralisers in mapping class
groups of surfaces. This interest is motivated by the following observation: for
$g\ge1$ the space $\Lambda \frM_{g,1}\simeq \Lambda B\Gamma_{g,1}$ has one
connected component for each conjugacy class $[\phi]\in\on{Conj}(\Gamma_{g,1})$;
this component is homotopy equivalent to $B Z(\phi,\Gamma_{g,1})$, where we
denote by $Z(\phi,\Gamma_{g,1})\subset\Gamma_{g,1}$ the centraliser of $\phi$ in
$\Gamma_{g,1}$, i.e.\ the subgroup of all mapping classes $\psi\in\Gamma_{g,1}$
commuting with $\phi$.

In this section, we will first introduce some notation for surfaces and mapping
class groups, and then define the cut locus of a mapping class.

\subsection{Surfaces and mapping class groups}
We work in the entire article with smooth, oriented surfaces and
orientation-preserving diffeomorphisms of surfaces.
\begin{nota}
  \label{nota:surface}
  We usually denote by $\caS$ a smooth, compact, oriented, possibly disconnected
  surface, such that each component of $\caS$ has non-empty boundary; we denote
  the boundary of $\caS$ by $\partial \caS\subset \caS$.
  
  The boundary $\del\caS$ is equipped with a decomposition
  $\del\caS=\del^{\text{in}}\caS\sqcup\del^{\text{out}}\caS$, into unions of
  connected components: the \emph{incoming} boundary $\del^{\text{in}}\caS$ is
  allowed to be empty, whereas each component of $\caS$ is required to intersect
  the \emph{outgoing} boundary in at least one curve; see
  \tref{Figure}{fig:surface} for an example.
  
  Both parts of the boundary are equipped with an \emph{ordering} and a
  \emph{parametrisation}: i.e.\ there are preferred diffeomorphisms
  $\vartheta^{\text{in}}\colon \{1,\dotsc,n\}\times S^1\to \partial^{\text{in}}
  \caS$ and
  $\vartheta^{\text{out}}\colon \{1,\dotsc,n'\}\times S^1\to
  \partial^{\text{out}} \caS$, where $n=\#\pi_0(\del^{\text{in}}\caS)$ and
  $n'=\#\pi_0(\del^{\text{out}}\caS)$.
  
  Note that each boundary component $c\subset\del\caS$ is endowed with two
  natural orientations: the first is induced from the orientation of $\caS$,
  i.e. it is the unique orientation of $c$ that, concatenated with a vector
  field along $c$ pointing out of $\caS$, returns the orientation of $\caS$; the
  second orientation comes from the parametrisation of $c$.  For an
  \emph{incoming} boundary component $c\subset\del^{\text{in}}\caS$ we require
  that these two orientations coincide, whereas for an \emph{outgoing} boundary
  component $c\subset\del^{\text{out}}\caS$ we require that these two
  orientations differ.
\end{nota}
We denote a surface by $(\caS,\theta)$, or shortly by $\caS$ when it is
not necessary to mention the parametrisation of the boundary; here $\theta$ is
the map \mbox{$\set{1,\dots,n+n'}\times S^1\to \del\caS$} obtained by concatenation of
$\theta^{\text{in}}$ and $\theta^{\text{out}}$.\vspace*{6px}

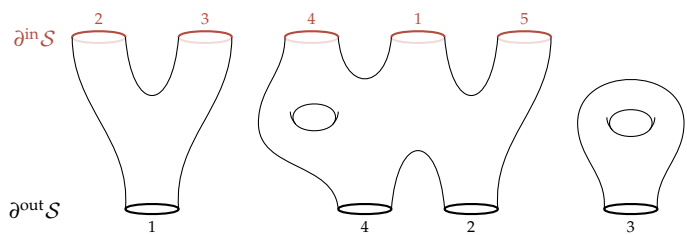
\begin{figure}[h]
  \centering
  \begin{tikzpicture}[xscale=.7,yscale=.76]
    \draw[looseness=.35,thick,dred] (0,3) to[out=-90,in=-90] ++(1,0) to[out=90,in=90] (0,3);
    \draw[looseness=.35,thick,dred] (2,3) to[out=-90,in=-90] ++(1,0) to[out=90,in=90] (2,3);
    \draw[looseness=.35,thick,dred] (4,3) to[out=-90,in=-90] ++(1,0) to[out=90,in=90] (4,3);
    \draw[looseness=.35,thick,dred] (6,3) to[out=-90,in=-90] ++(1,0) to[out=90,in=90] (6,3);
    \draw[looseness=.35,thick,dred] (8,3) to[out=-90,in=-90] ++(1,0) to[out=90,in=90] (8,3);
    \fill[white, opacity=.8] (0,2.5) rectangle (11,3);   
    \draw[looseness=.3,thick] (1,0) to[out=-90,in=-90] ++(1,0) to[out=90,in=90] (1,0);
    \draw[looseness=.3,thick] (5,0) to[out=-90,in=-90] ++(1,0) to[out=90,in=90] (5,0);
    \draw[looseness=.3,thick] (7,0) to[out=-90,in=-90] ++(1,0) to[out=90,in=90] (7,0);
    \draw[looseness=.3,thick] (10,0) to[out=-90,in=-90] ++(1,0) to[out=90,in=90] (10,0);
    \draw (1,0) to[out=85,in=-90]  (0,3);
    \draw (2,0) to[out=95,in=-90]  (3,3);
    \draw[looseness=3.5] (1,3) to[out=-85,in=-95] (2,3);
    \draw[looseness=2.5] (5,3) to[out=-85,in=-95] (6,3);
    \draw[looseness=3.5] (7,3) to[out=-85,in=-95] (8,3);
    \draw[looseness=3.5] (6,0) to[out=85,in=95]   (7,0);
    \draw (4,3) to [out=-90,in=90] ++(-.5,-1.5) to [out=-90,in=95] (5,0);
    \draw (8,0) to[out=95,in=-90]  (9,3);
    \draw (10,0) to[out=85,in=-90] (9.5,1.5);
    \draw (11,0) to[out=95,in=-90] (11.5,1.5);
    \draw[looseness=1.3] (9.5,1.5) to[out=90,in=90] (11.5,1.5);
    \draw[looseness=1] (10.1,1.5) to[out=-90,in=-90] ++(.8,0) to[out=90,in=90] (10.1,1.5);
    \draw[thin] (10.15,1.38) to[out=130,in=-90] (10.05,1.6);
    \draw[thin] (10.85,1.38) to[out=50 ,in=-90] (10.95,1.6);
    \draw[looseness=1] (4.15,1.6) to[out=-90,in=-90] ++(.8,0) to[out=90,in=90] (4.15,1.6);
    \draw[thin] (4.2,1.48) to[out=130,in=-90] (4.1,1.7);
    \draw[thin] (4.9,1.48) to[out=50 ,in=-90] (5,1.7);
    \node[anchor=base,dred] at (.5,3.23) {\tiny $2$};
    \node[anchor=base,dred] at (6.5,3.23) {\tiny $1$};
    \node[anchor=base,dred] at (2.5,3.23) {\tiny $3$};
    \node[anchor=base,dred] at (4.5,3.23) {\tiny $4$};
    \node[anchor=base,dred] at (8.5,3.23) {\tiny $5$};
    \node[dred] at (-.7,3) {\scriptsize $\partial^{\mathrm{in}}\caS$};
    \node at (-.7,0) {\scriptsize $\partial^{\mathrm{out}}\caS$};
    \node[anchor=base] at (1.5,-.4) {\tiny $1$};
    \node[anchor=base] at (5.5,-.4) {\tiny $4$};
    \node[anchor=base] at (7.5,-.4) {\tiny $2$};
    \node[anchor=base] at (10.5,-.4) {\tiny $3$};
  \end{tikzpicture}
  \caption{A surface $\caS$ with $5$ incoming and $4$ outgoing boundary curves.}
  \label{fig:surface}
\end{figure}

\begin{defi}
\label{defi:presbparam}
Let $\Phi\colon(\caS,\theta)\to(\caS',\theta')$ be an orientation-preserving
diffeomorphism of surfaces. We say that $\Phi$ \emph{preserves the boundary
  parametrisation} if the following conditions hold:
\begin{itemize}
\item $\Phi$ restricts to diffeomorphisms
  $\Phi\colon\del^{\text{in}}\caS\overset{\cong}{\to}\del^{\text{in}}\caS'$ 
  and $\Phi\colon\del^{\text{out}}\caS\overset{\cong}{\to}\del^{\text{out}}\caS'$.
\item If $n\coloneqq \#\pi_0(\del^{\text{in}}\caS)=\#\pi_0(\del^{\text{in}}\caS')$
  and $n'\coloneqq\#\pi_0(\del^{\text{out}}\caS)=\#\pi_0(\del^{\text{out}}\caS')$, then
  there exist permutations $\sigma^{\text{in}}\in \frS_n$ and
  $\sigma^{\text{out}}\in \frS_{n'}$ such that
  \begin{itemize}
  \item $(\Phi\circ\vartheta^{\text{in}})(j,\zeta) = (\vartheta')^{\text{in}}(\sigma^{\text{in}}(j),\zeta)$
    for each $1\le j\le n$ and $\zeta\in S^1$;
  \item $(\Phi\circ\vartheta^{\text{out}})(j,\zeta) = (\vartheta')^{\text{out}}(\sigma^{\text{out}}(j),\zeta)$
    for each $1\le j\le n'$ and $\zeta\in S^1$.
  \end{itemize}
\end{itemize}
\end{defi}
Note that in the previous definition we do not require that $\Phi$ also
preserves the orderings of the incoming and outgoing components of $\del\caS$
and $\del\caS'$, i.e.\ the permutations $\sigma^{\text{in}}\in \frS_n$ and
$\sigma^{\text{out}}\in \frS_{n'}$ may be non-trivial. To emphasise this we
distinguish between the words ‘ordering’ and ‘parametrisation’. In
\iref{Section}{sec:colouredOHS}, when introducing the coloured operad $\scM$, we
will also consider surfaces equipped with a parametrisation of \emph{collar
  neighbourhoods} of the incoming and the outgoing boundary.

\begin{nota}
  For all $g\ge0$ and $n\ge1$, we fix a model surface $\Sigma_{g,n}$: it is a
  connected surface of genus $g$ with $n$ outgoing and no incoming boundary
  components. We say that $\caS$ is of type $\Sigma_{g,n}$ if there exists a
  diffeomorphism $\caS\to\Sigma_{g,n}$ preserving the boundary parametrisation.
\end{nota}

\begin{defi}
  \label{defi:mcg}
  The \emph{mapping class group} $\Gamma(\caS,\del\caS)$ is the group of isotopy
  classes of diffeomorphisms $\Phi\colon \caS\to \caS$ that fix the boundary
  pointwise, i.e.\ $\Phi\circ\vartheta=\vartheta$. Such a $\Phi$ is called a
  \emph{diffeomorphism of $(\caS,\partial \caS)$}.  For $\caS=\Sigma_{g,n}$, we
  also write $\Gamma_{g,n}$ for $\Gamma(\caS,\del\caS)$. We usually denote
  isotopy classes by small Greek letters $\varphi$, and use capital Greek
  letters for diffeomorphisms.
\end{defi}

\begin{rem}
  \label{rem:Xiconj}
  Note that any diffeomorphism $\Xi\colon\caS\to\caS'$ induces an identification
  of the groups $\Gamma(\caS,\del\caS)\cong\Gamma(\caS',\del\caS')$ by
  conjugation with $\Xi$: the mapping class $\phi\in \Gamma(\caS,\del\caS)$,
  represented by the diffeomorphism $\Phi$, corresponds to the mapping class
  $\phi^\Xi\in \Gamma(\caS',\del\caS')$, represented by
  $\Xi\circ\Phi\circ \Xi^{-1}$.
\end{rem}

\begin{defi}
  \label{defi:extendedmcg}
  Let
  $\frH\subset\fS_{\pi_0(\del^{\text{out}}\caS)}\times\fS_{\pi_0(\del^{\text{in}}\caS)}$
  be a subgroup, where $\frS$ denotes the symmetric group on the finite set
  given as index. We define the \emph{extended mapping class group}
  $\Gamma^\frH(\caS)$ as the group of isotopy classes of diffeomorphisms
  $\Phi\colon\caS\to\caS$ that preserve the orientation of $\caS$ and the
  boundary parametrisation, and permute the components of
  $\del^{\text{out}}\caS$ and $\del^{\text{in}}\caS$ according to a pair of
  permutations in $\frH$.
  
  If we take
  $\frH=\fS_{\pi_0(\del^{\text{out}}\caS)}\times\fS_{\pi_0(\del^{\text{in}}\caS)}$,
  we also write $\Gamma(\caS)$ for the extended mapping class group.  If
  $\caS=\Sigma_{g,n}$ we also write $\Gamma_{g,n}^\frH=\Gamma^{\frH}(\caS)$ and
  $\Gamma_{g,(n)}=\Gamma(\caS)$ for the extended mapping class groups.
\end{defi}
Note that we have an extension
$1\to\Gamma(\caS,\del\caS)\to\Gamma^\frH(\caS)\to \frH\to 1$.

\begin{defi}
  \label{def:centraliser}
  If $G$ is a group, we denote by $\on{Conj}(G)$ the set of conjugacy classes of
  $G$. For a group element $\gamma\in G$, we denote by $Z(\gamma,G)\subseteq G$
  the \emph{centraliser} of $\gamma$, i.e.\ the subgroup of all elements
  $\gamma'\in G$ that commute with $\gamma$.
\end{defi}

\begin{nota}
  \label{nota:cG}
  We fix, once and for all, for all conjugacy classes in
  $\on{Conj}(\Gamma_{g,n})$, a representative of the class. We denote by
  $\fg\colon\Gamma_{g,n}\to\Gamma_{g,n}$ the function of sets assigning to each
  element of $\Gamma_{g,n}$ the representative of its class.
\end{nota}

\begin{defi}
 \label{defi:modulispace}
 Let $\caS$ be a surface. We denote by $\frM(\caS)$ the moduli space of Riemann
 structures on $\caS$; two Riemann structures on $\caS$ are considered
 equivalent if there is a diffeomorphism $\Psi\colon\caS\to\caS$ fixing
 $\del\caS$ pointwise and pulling back one Riemann structure to the other. If
 $\caS=\Sigma_{g,n}$ we also write $\frM_{g,n}$ for the moduli space
 $\frM(\Sigma_{g,n})$.\looseness-1
\end{defi}
The hypothesis that every connected component of $\caS$ has non-empty boundary
implies that $\frM(\caS)$ is a classifying space for the group
$\Gamma(\caS,\del\caS)$.

\subsection{Arcs and the Alexander method}

For the rest of this section we fix a connected surface $\caS$ of type
$\Sigma_{g,n}$, with $g\ge0$ and $n\ge1$, and focus on the mapping class group
$\Gamma(\caS,\del\caS)$.  Given a mapping class $\phi\in\Gamma(\caS,\del\caS)$,
we construct a system of simple closed curves on $\caS$ cutting $\caS$ into two
subsurfaces $W$ and $Y$: the subsurface $W\subset\caS$ is the \emph{white}
subsurface and is, up to isotopy, the maximal subsurface of $\caS$ satisfying
the following conditions:
\begin{itemize}
\item all connected components of $W$ touch $\del\caS$;\vspace*{-3px}
\item $\phi$ can be represented by a diffeomorphism of $\caS$ fixing $W$ pointwise.
\end{itemize}

We start by recalling some standard facts about embedded arcs in surfaces. The
material of this subsection is taken, up to minor changes, from \cite{FarbMargalit}.
For the following definition see \cite[§\,1.2.7]{FarbMargalit}.

\begin{defi}
  \label{defn:arc}
  An \emph{arc} in $\caS$ is a smooth embedding
  $\alpha\colon[0;1]\hookrightarrow \caS$ such that
  $\alpha^{-1}(\del \caS)=\{0,1\}$ and $\alpha$ is transverse to $\del \caS$.
  Two arcs are \emph{disjoint} if their images are disjoint (also at the
  endpoints).  An arc is \emph{essential} if it does not cut $\caS$ in two
  parts, one of which is a disc.
  
  Two arcs $\alpha$ and $\alpha'$ are \emph{directly isotopic} if
  $\alpha(0)=\alpha'(0)$, $\alpha(1)=\alpha'(1)$, and there is an isotopy of
  embeddings $[0;1]\to \caS$ that is stationary on $\set{0,1}$ and connects
  $\alpha$ to $\alpha'$.  Two arcs are \emph{inversely isotopic} if the previous
  holds after reparametrising one of the two arcs in the opposite direction.
  Two arcs are \emph{isotopic} if they are directly or inversely isotopic; we
  write $\alpha\sim\alpha'$ if $\alpha$ and $\alpha'$ are isotopic.
  
  Two arcs $\alpha$ and $\beta$ are in \emph{minimal position} if they are
  disjoint at their endpoints, they intersect transversely, and the number of
  intersection points in $\alpha\cup\beta$ is minimal among all choices of
  $\alpha'\sim\alpha$ and $\beta'\sim\beta$ with $\alpha'$ and $\beta'$
  transverse.
\end{defi}

Note that we only consider isotopy classes of arcs relative to their endpoints; two
arcs sharing one endpoint are never considered in minimal position (and, by convention,
cannot be isotoped to be in minimal position). In particular, unless $\caS$ is a disc,
there are more than countably many isotopy classes of essential arcs in $\caS$.

If $\chi(\caS)=2-2g-n\le0$, then according to \cite[§\,1.2.7]{FarbMargalit} the
following statement holds: given a collection of essential arcs
$\alpha_1,\dots,\alpha_k$ in $\caS$ which have all distinct endpoints and are
pairwise non-isotopic, one can replace each $\alpha_i$ with an arc
$\alpha'_i\sim\alpha_i$ so that $\alpha'_1,\dots,\alpha'_k$ are pairwise in
minimal position. In fact it suffices to choose a Riemannian metric of constant
curvature on $\caS$ such that $\del\caS$ is geodesic, and replace each
$\alpha_i$ with its geodesic representative relative to the endpoints: the
hypothesis on $\chi(\caS)$ ensures that we get a non-positively curved metric,
so that geodesic representatives are unique; moreover geodesic representatives
are automatically pairwise in minimal position.

Among all connected surfaces with non-empty boundary, the only one with positive
Euler characteristic is $\Sigma_{0,1}$, i.e.\ the disc: note that the statement
holds vacuously also for the disc, which contains no essential arc.

The following is a special case of the Alexander method
\cite[Prop.\,2.8]{FarbMargalit}.
\begin{prop}
  \label{prop:Alexandermethod}
  Let $\alpha_0,\dots,\alpha_k$ and $\beta$ be a collection of essential arcs in
  $\caS$, and assume the following:
  \begin{itemize}
  \item all arcs are pairwise in minimal position;\vspace*{-5px}
  \item the arcs $\alpha_0,\dots,\alpha_k$ are pairwise disjoint.
  \end{itemize}
  Let $\Phi$ be a diffeomorphism of $(\caS,\del\caS)$, and suppose that $\Phi$
  fixes each of $\alpha_0,\dots,\alpha_k$ and $\beta$ up to isotopy relative to
  the endpoints. Then $\Phi$ can be isotoped to a diffeomorphism $\Phi'$ of
  $\caS$ that fixes $\alpha_0\cup\dotsb\cup\alpha_k\cup\beta$ pointwise.
\end{prop}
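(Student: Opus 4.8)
The plan is to run the cut-and-induct argument behind the Alexander method; concretely, to reprove the part of \cite[Prop.\,2.8]{FarbMargalit} that produces a representative fixing the arcs pointwise, using that $\alpha_0,\dots,\alpha_k$ are disjoint to keep the bookkeeping light. (If $\caS$ is a disc there is nothing to prove, so I assume $\chi(\caS)\le0$.) First I would isolate the basic move: if $\Psi$ is a diffeomorphism of a surface $(\caR,\del\caR)$ fixing pointwise a compact $1$-submanifold $B\subseteq\caR$ with $\del B\subseteq\del\caR$, and $\gamma\subseteq\caR$ is an arc disjoint from $B$ that $\Psi$ fixes up to isotopy relative to the endpoints, then $\Psi$ is isotopic, rel $B\cup\del\caR$, to a diffeomorphism fixing $B\cup\gamma$ pointwise. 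This follows from two applications of the isotopy extension theorem relative to $\del\caR$, with supports in a neighbourhood of $\gamma$ disjoint from $B$: one to arrange $\Psi(\gamma)=\gamma$ setwise, a second --- using that a self-diffeomorphism of $[0;1]$ fixing $\{0,1\}$ is isotopic rel endpoints to the identity --- to arrange $\Psi|_\gamma=\mathrm{id}$. Essentiality of $\gamma$ plays no role here.

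Since $\Phi$ fixes $\del\caS$ pointwise it fixes all arc endpoints, and the disjoint arcs $\alpha_0,\dots,\alpha_k$ are handled by iterating the basic move: isotope $\Phi$ rel $\del\caS$ to fix $\alpha_0$, then rel $\alpha_0\cup\del\caS$ to fix $\alpha_0\cup\alpha_1$ (legitimate since $\Phi$, altered only by isotopies, still fixes $\alpha_1$ up to isotopy rel endpoints), and so on, until $\Phi$ fixes $A\coloneqq\alpha_0\cup\dotsb\cup\alpha_k$ pointwise. To bring in $\beta$ I would cut $\caS$ along $A$: since $\Phi$ fixes $A$ pointwise it descends to a diffeomorphism $\Phi_A$ of the cut surface $\caS_A$ fixing $\del\caS_A$ pointwise. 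Here minimal position enters. Because $\beta$ forms no bigon with any $\alpha_i$ and $\Phi$ fixes $A$ pointwise, $\Phi(\beta)$ also forms no bigon with $A$ and meets $A$ in \emph{exactly} the same finite set of points as $\beta$ does --- any $p\in\beta\cap A$ is fixed by $\Phi$ and hence lies on $\Phi(\beta)\cap A$, and the reverse inclusion uses $\Phi^{-1}$. So $\beta$ and $\Phi(\beta)$ are cut by $A$ into matching sequences of pairwise-disjoint sub-arcs $\beta_1,\dots,\beta_m$ and $\beta_1'=\Phi_A(\beta_1),\dots,\beta_m'=\Phi_A(\beta_m)$ with $\del\beta_j=\del\beta_j'$. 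Granting that $\beta_j\sim\beta_j'$ rel endpoints in $\caS_A$, I apply the basic move to the $\beta_j$ one at a time inside $\caS_A$, isotoping $\Phi_A$ rel $\del\caS_A$ to fix $\beta_1\cup\dotsb\cup\beta_m$ pointwise; since $\del\caS_A$ contains the two copies of every $\alpha_i$, regluing along $A$ turns this into an isotopy of $\Phi$ rel $A\cup\del\caS$, producing the required $\Phi'$ fixing $A\cup\beta$ pointwise.

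The step I expect to be the main obstacle is the remaining claim: $\beta_j\sim\beta_j'$ rel endpoints in $\caS_A$, knowing only $\Phi(\beta)\sim\beta$ rel endpoints in $\caS$ together with the two facts just recorded. This is the topological content --- that cutting along an essential, pairwise-disjoint arc system in minimal position with the arcs in play neither creates nor destroys isotopies of arcs rel endpoints, equivalently that each component of $\caS_A$ embeds $\pi_1$-injectively and detects such isotopy classes. I would prove it by induction on $\#(\beta\cap A)$: the base case $\beta\cap A=\emptyset$ is the incompressibility of the complement of a disjoint essential arc system, which follows from the non-positively curved geodesic representatives recalled above; the inductive step uses the bigon criterion relative to $A$ to upgrade an isotopy $\beta\rightsquigarrow\Phi(\beta)$ in $\caS$ to one that keeps the intersection with $A$ pointwise fixed, hence descends to $\caS_A$. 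The one degeneracy to watch is that $\caS_A$ may have disc components, but on a disc any two arcs with the same endpoints are isotopic rel endpoints, so nothing is lost. For these standard facts about arcs, minimal position and the bigon criterion I would cite \cite[§\,1.2.7]{FarbMargalit}.
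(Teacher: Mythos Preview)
The paper does not actually prove this proposition: it records it as a special case of the Alexander method and cites \cite[Prop.\,2.8]{FarbMargalit}. Your reconstruction of the cut-and-induct argument is essentially the standard proof and is correct in outline. One spot deserves tightening: your ``basic move'' asserts that the ambient isotopy can be taken with support disjoint from $B$, but the hypothesis only gives $\Psi(\gamma)\sim\gamma$ rel endpoints in $\caR$, not in $\caR\setminus B$. Passing from the former to the latter is exactly the $\pi_1$-injectivity of the complement of an essential arc system --- the fact you correctly isolate as the crux for the $\beta$-pieces, but which is already needed at each step of the iteration over the $\alpha_i$. Once invoked there too (or, equivalently, once you cut along $\alpha_0\cup\dots\cup\alpha_{i-1}$ at each stage and work in the cut surface), the argument goes through. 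Compared with the paper's bare citation, your approach has the virtue of making explicit where essentiality and the bigon criterion actually enter.
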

In \iref{Proposition}{prop:Alexandermethod}, one can enhance the requirement on
$\Phi'$ to be the following: the map $\Phi'$ fixes pointwise a small
neighbourhood $U\subset \caS$ of the union
\mbox{$\alpha_0\cup\dotsb\cup\alpha_k\cup\beta\cup\del\caS$}. Here and in the
following, a \emph{small neighbourhood} is required to deformation retract onto
$\alpha_0\cup\dotsb\cup\alpha_k\cup\beta\cup\del\caS$ by restriction of an
ambient homotopy, defined on $\caS$ and stationary on
$\alpha_0\cup\dotsb\cup\alpha_k\cup\beta\cup\del\caS$.

\begin{rem}
  The Alexander method, as stated in \cite{FarbMargalit}, only applies under the
  additional hypothesis that the arcs $\alpha_0,\dots,\alpha_k$ and $\beta$ are
  pairwise non-isotopic. We remark, however, that this hypothesis is not
  essential.

  To see this, suppose that $\alpha_0,\dots,\alpha_k$ and $\beta$ is a
  collection of arcs as in \iref{Proposition}{prop:Alexandermethod}: up to
  reordering the arcs $\alpha_i$, we can assume that there is $0\le k'\le k$
  such that the arcs $\alpha_0,\dots,\alpha_{k'}$ and $\beta$ are pairwise
  non-isotopic, and, moreover, each $\alpha_i$ with $i\ge k'+1$ is isotopic to
  some $\alpha_j$ with $j\le k'$.

  We can then apply the Alexander method to the collection
  $\alpha_0,\dots,\alpha_{k'}$ and $\beta$, obtaining a diffeomorphism $\Phi'$
  that fixes a small neighbourhood $U$ of
  \mbox{$\alpha_0\cup\dotsb\cup\alpha_{k'}\cup\beta$} pointwise. We then argue as
  follows: for each index $i\ge k'+1$, there is an index $j\le k'$ such that
  $\alpha_i$ and $\alpha_j$ are isotopic and in minimal position: this implies
  that they cobound (together with two segments in $\partial \caS$) a rectangle
  in $\caS$, and, up to shrinking, we can assume that this rectangle lies
  already inside $U$, i.e.\ we can assume that $\alpha_i$ is fixed pointwise by
  $\Phi'$ as well.
\end{rem} 

\subsection{The cut locus of a mapping class}

In this subsection we fix a class $\phi\in\Gamma(\caS,\del\caS)$, represented by
a diffeo\-morphism $\Phi$, and study the isotopy classes of arcs and curves that
it fixes.

\begin{defi}
  \label{defi:parallelarcs}
  Two arcs $\alpha$ and $\alpha'$ in $\caS$ are \emph{directly parallel} if they
  are disjoint and there is an \emph{embedding}
  $[0;1]\times[0;1]\hookrightarrow\caS$ restricting to $\alpha$ on
  $[0;1]\times\set{0}$ and to $\alpha'$ on $[0;1]\times\set{1}$, and restricting
  to an embedding $\set{0,1}\times[0;1]\hookrightarrow\del\caS$.
  
  Two arcs $\alpha$ and $\alpha'$ are \emph{inversely parallel} if the previous
  holds after reparametrising one of the two arcs in the opposite direction. Two
  arcs $\alpha$ and $\alpha'$ are \emph{parallel} if they are directly or
  inversely parallel.
\end{defi}

\begin{figure}
  \centering
  \begin{tikzpicture}[xscale=1.45,yscale=1.45]
    \draw[looseness=1.5] (2,0) to[out=90,in=90] (4,0);
    \draw (1,0) to[out=90,in=-90] ++(-.6,1.6) to[out=90,in=180] ++(1.1,.8) to[out=0,in=160]
    (3,2) to[out=-20,in=90] (5,0);
    \draw[thick,dblue!20] (1.2,.08) to[out=90,in=-90] (.7,1.5) to[out=90,in=180]
    (1.5,2.1) to[out=0,in=90] (2.3,1.5) to[out=-90,in=90] (1.8,.08);
    \draw[thick,dblue] (1.8,.08) -- (1.8,-.08);
    \draw[thick,dblue] (1.2,.08) -- (1.2,-.08);
    \draw[thick,dblue,looseness=1.5] (1.9,.06) to[out=90,in=90] (4.1,.07);
    \draw[thick,dblue,looseness=1] (1.3,.09) to[out=90,in=-90] (.8,1.5) to[out=90,in=180]
    (1.5,2) to[out=0,in=90] (2.2,1.5) to[out=-90,in=90] (1.7,.09);
    \draw[thick,dblue,looseness=1] (1.4,.09) to[out=90,in=-90] (.95,1.5) to[out=90,in=180]
    (1.5,1.85) to[out=0,in=160] (1.8,1.67);
    \draw[thick,dblue!20] (1.8,1.67) to[out=-20,in=90] (2,1.4) to[out=-90,in=90] (1.65,.09);
    \draw[thick,dblue] (1.65,.09) -- (1.65,-.09);
    \draw[thick,bbblue,looseness=1] (1.5,1.26) to[out=50,in=90] (1.56,1.05) to[out=-90,in=90] (1.55,.1);
    \draw[thick,dblue] (1.55,.1) -- (1.55,-.09);
    \draw[looseness=1] (1.1,1.5) to[out=-90,in=-90] ++(.8,0) to[out=90,in=90] (1.1,1.5);
    \draw[thick,dblue,looseness=.7] (1.5,.1) to[out=90,in=-90] (1.47,1.1) to[out=90,in=230] (1.5,1.27);
    \draw[thin] (1.15,1.38) to[out=130,in=-90] (1.05,1.6);
    \draw[thin] (1.85,1.38) to[out=50 ,in=-90] (1.95,1.6);
    \draw[thick,dblue,looseness=1] (1.1,.06) to[out=90,in=-90] (.58,1.55) to[out=90,in=180]
    (1.5,2.2) to[out=0,in=150] (2.4,1.88) to[out=-30,in=90] (4.9,.07);
    \draw[looseness=.3,thick] (1,0) to[out=-90,in=-90] ++(1,0) to[out=90,in=90] (1,0);
    \draw[looseness=.3,thick] (4,0) to[out=-90,in=-90] ++(1,0) to[out=90,in=90] (4,0);
    \draw[dblue,very thick,fill=dblue] (1.55,2) -- (1.45,1.97) -- (1.45,2.03) -- (1.55,2);
    \draw[dblue!20,very thick,fill=dblue!20] (1.55,2.1) -- (1.45,2.07) -- (1.45,2.13) -- (1.55,2.1);
    \draw[dblue,very thick,fill=dblue] (1.35,1.84) -- (1.26,1.78) -- (1.24,1.84) -- (1.35,1.84);
    \draw[dblue,very thick,fill=dblue] (1.55,2.2) -- (1.45,2.17) -- (1.45,2.23) -- (1.55,2.2);
    \draw[dblue,very thick,fill=dblue] (3.05,1.035) -- (2.95,1.005) -- (2.95,1.065) -- (3.05,1.035);
    \draw[dblue,very thick,fill=dblue] (1.475,.8) -- (1.51,.7) -- (1.45,.7) -- (1.475,.8);
    \node[dblue] at (3.2,1.11) {\tiny $\alpha_0$};
    \node[dblue] at (3.4,1.55) {\tiny $\alpha_1$};
    \node[dblue] at (1.98,1.75) {\tiny $\alpha_2$};
    \node[dblue] at (1.37,1) {\tiny $\alpha_3$};
    \node[dblue] at (1.1,1.2) {\tiny $\alpha_4$};
    \node[dblue!70] at (2.41,1.5) {\tiny $\alpha_5$};
    \node at (1.5,-.3) {\tiny $1$};
    \node at (4.5,-.3) {\tiny $2$};
  \end{tikzpicture}
  \caption{A maximal collection of six pairwise non-parallel arcs
    $\alpha_0,\dotsc,\alpha_5$ on a surface of type $\Sigma_{1,2}$}
  \label{fig:nonparallelarcs}
\end{figure}
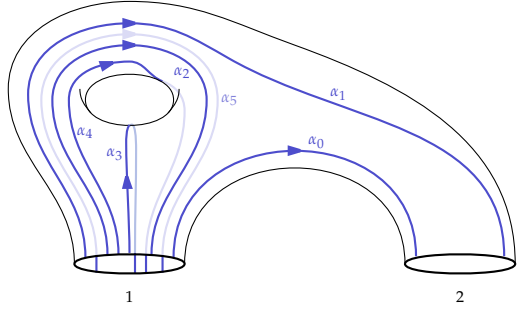
Note that in the previous definition, we do not insist that the embedding
\mbox{$[0;1]\times[0;1]\hookrightarrow\caS$} is orientation-preserving; see
\iref{Figure}{fig:nonparallelarcs}.
\begin{defi}
  \label{defi:fixedarccomplex}
  The \emph{fixed-arc complex of $\phi$} is an abstract simplicial complex whose
  vertices are all isotopy classes of essential arcs $\alpha$ in $\caS$ which
  are fixed by $\phi$.  A collection of isotopy classes of arcs
  $\alpha_0,\dotsc,\alpha_k$ span a $k$-simplex if the arcs
  $\alpha_0,\dotsc,\alpha_k$ can be isotoped to disjoint, pairwise non-parallel
  arcs $\alpha'_0,\dotsc,\alpha'_k$.
  The mapping class $\phi$ is called \emph{$\del$-irreducible} if its fixed-arc
  complex is empty and if $\caS$ is not of type $\Sigma_{0,1}$.
\end{defi}

\begin{expl}
  Every isotopy class of essential arcs in $\caS$ is fixed (up to isotopy) by
  the identity $\one\in\Gamma(\caS,\del\caS)$.  Therefore $\one$ is not
  $\del$-irreducible, provided that $\caS$ admits some essential arc; if $\caS$
  does not admit essential arcs, then $\caS$ is a disc $\Sigma_{0,1}$ and we
  have prescribed, also in this case, that $\one\in\Gamma_{0,1}$ is \emph{not}
  $\del$-irreducible.  The reason to regard $\one\in\Gamma_{0,1}$ as not being
  $\del$-irreducible will become clear later; for the moment we make a simple
  comparison and say that, in a similar way, $1\in\Z$ is not considered a prime
  number.
\end{expl}

\begin{expl}
  The fixed-arc complex of $\phi\in\Gamma_{0,2}\cong\Z$ is empty if
  $\phi\neq\one$ and consists of uncountably many vertices, joined by no higher
  simplex if $\phi=\one$.  Therefore every non-trivial element in $\Gamma_{0,2}$
  is $\del$-irreducible.
\end{expl}

\begin{expl}
  \label{expl:boundarydehn}
  For $g\ge1$ the boundary Dehn twist $T_{\del}\in\Gamma_{g,1}$ is
  $\del$-irreducible, though there are plenty of isotopy classes of \emph{simple
    closed curves} in $\Sigma_{g,1}$ that are fixed by $T_{\del}$: in fact,
  \emph{all} simple closed curves are fixed, up to isotopy, by
  $T_{\del}$. Nevertheless, \emph{no isotopy class} of essential arcs is fixed by
  $\phi$; here it is crucial to consider isotopy classes of arcs relative to the
  endpoints.
\end{expl}

Note that a simplex in the fixed-arc complex of a class
$\phi\in\Gamma(\caS,\del\caS)$ has dimension at most $-3\chi(\caS)-1$ if
$\chi(\caS)<0$, and at most $0$ if $\caS$ is of type $\Sigma_{0,2}$.  In the
second case, just note that each two disjoint essential arcs in $\Sigma_{0,2}$
are parallel. In the first case, let $\alpha_0,\dots,\alpha_k$ be disjoint,
pairwise non-parallel and essential arcs in $\caS$; then cutting $\caS$ along
the arcs $\alpha_i$ yields a surface whose connected components are either
hexagons or \new{connected} surfaces of negative Euler characteristic. Up to
adjoining more arcs (and thus increase $k$), we can assume to have only
hexagons. Each hexagon has 3 sides coming from $\del\caS$ and 3 sides coming
from the cuts. If $\ell\ge1$ denotes the number of hexagons, we have
$3\ell=2(k+1)$, as each arc contributes to 2 hexagons; moreover
$\chi(\caS)=\ell-(k+1)$, implying $3\chi(\caS)=3\ell-3(k+1)=-k-1$.

\begin{constr}
  \label{constr:cutlocus}
  Let $\caS$ be not of type $\Sigma_{0,1}$, let $\phi\in\Gamma(\caS,\del\caS)$,
  and let $\alpha_0,\dots,\alpha_k$ be disjoint, essential, pairwise
  non-parallel arcs in $\caS$, representing a \emph{maximal} simplex in the
  fixed-arc complex of $\phi$. Let $U$ be a closed, small neighbourhood of the
  union $\alpha_0\cup\dotsb\cup\alpha_k\cup\del \caS$. The complement
  $\caS\setminus U$ consists of many regions, some of which may be discs: let
  $W\subset\caS$ denote the union of $U$ and all discs in $\caS\setminus U$.
  Then $W$ is a closed, possibly disconnected subsurface of $\caS$; we denote by
  $Y$ the closure of $\caS\setminus W$.  If $\del W$ denotes the union of all
  boundary components of $W$, and $\del Y$ denotes the union of all boundary
  components of $Y$, then $\del W$ takes the form
  $\del \caS\cup c_1\cup\dotsb\cup c_h$, for some $h\geq0$ and some curves
  $c_1,\dots,c_h\subset\caS$; similarly $\del Y=c_1\cup\dotsb \cup c_h$.  The
  curves $c_1,\dots,c_h$ inherit a canonical boundary orientation from $Y$,
  which is oriented as subsurface of $\caS$.
\end{constr}

\begin{defi}
  \label{defi:cutlocus}
  For $\phi$ and $\alpha_0,\dots,\alpha_k$ as above, we define the \emph{cut
    locus} of $\phi$, re\-la\-tive to the simplex $\alpha_0,\dots,\alpha_k$, as
  the isotopy class of the multicurve $c_1,\dots,c_h$, de\-noted
  $[c_1,\dots,c_h]$. Here and in the following, a \emph{multicurve} is an
  unordered collection of disjoint and \emph{oriented} simple closed curves, and
  two multicurves are considered isotopic if there is an ambient isotopy
  bringing the first to the second.
  
  The two regions $W$ and $Y$ are called the associated \emph{white} and
  \emph{yellow} regions or subsurfaces of $\caS$, and they depend, as subsets of
  $\caS$, on a choice of a multicurve representing the cut locus.  If
  $\phi=\one\in\Gamma_{0,1}$, we declare the cut locus to be empty and $W$ to be
  the entire surface $\Sigma_{0,1}$.
\end{defi}

\begin{figure}
  \centering
  \begin{tikzpicture}[scale=.9]
    \draw[dgreen,thick,looseness=.2] (3.5,3.4) to[out=0,in=0] (3.5,4.3)
    to[out=180,in=180] (3.5,3.4);
    \fill[white,opacity=.8] (3.5,5) rectangle (3.8,3);
    \draw[dgreen,semithick,looseness=.25] (.5,4.24) to[out=0,in=0] (.5,4.69)
    to[out=180,in=180] (.5,4.24);
    \fill[white,opacity=.8] (.5,4.8) rectangle (.7,4.2);
    \draw[dgreen,semithick,looseness=.25] (6.5,4.24) to[out=0,in=0] (6.5,4.69)
    to[out=180,in=180] (6.5,4.24);
    \fill[white,opacity=.8] (6.5,4.8) rectangle (6.7,4.2);
    \draw[dgreen,semithick,looseness=.25] (8.5,4.24) to[out=0,in=0] (8.5,4.69)
    to[out=180,in=180] (8.5,4.24);
    \fill[white,opacity=.8] (8.5,4.8) rectangle (8.7,4.2);
    \node[dgreen] at (.5,4.9) {\tiny $d_1$};
    \node[dgreen] at (3.5,4.5) {\tiny $d_3$};
    \node[dgreen] at (6.5,4.9) {\tiny $d_5$};
    \node[dgreen] at (8.5,4.9) {\tiny $d_7$};
    \node[dgreen] at (-.5,4) {\tiny $d_2$};
    \node[dgreen] at (5.5,4) {\tiny $d_4$};
    \node[dgreen] at (9.5,4) {\tiny $d_6$};
    \node at (.2,3.25) {\tiny $c_1$};
    \node at (2.2,3.25) {\tiny $c_2$};
    \node at (4.2,3.25) {\tiny $c_3$};
    \node at (6.2,3.25) {\tiny $c_4$};
    \node at (8.2,3.25) {\tiny $c_5$};
    \draw[dgreen,semithick,looseness=1.36] (-.1,4) to[out=90,in=90] ++(1.2,0)
    to[out=-90,in=-90] ++(-1.2,0);
    \draw[dgreen,semithick,looseness=1.36] (5.9,4) to[out=90,in=90] ++(1.2,0)
    to[out=-90,in=-90] ++(-1.2,0);
    \draw[dgreen,semithick,looseness=1.36] (7.9,4) to[out=90,in=90] ++(1.2,0)
    to[out=-90,in=-90] ++(-1.2,0);
    \draw[looseness=.35,thick] (0,3) to[out=-90,in=-90] ++(1,0) to[out=90,in=90] (0,3);
    \draw[looseness=.35,thick] (2,3) to[out=-90,in=-90] ++(1,0) to[out=90,in=90] (2,3);
    \draw[looseness=.35,thick] (4,3) to[out=-90,in=-90] ++(1,0) to[out=90,in=90] (4,3);
    \draw[looseness=.35,thick] (6,3) to[out=-90,in=-90] ++(1,0) to[out=90,in=90] (6,3);
    \draw[looseness=.35,thick] (8,3) to[out=-90,in=-90] ++(1,0) to[out=90,in=90] (8,3);
    \draw[dyellow,looseness=1.3] (3,3) to[out=90,in=90] (4,3);
    \draw[dyellow,looseness=1.5] (2,3) to[out=90,in=90] (5,3);
    \draw[dyellow] (0,3) to[out=90,in=-90] (-.3,4) to[out=90,in=180] (.5,4.7)
    to[out=0,in=90] (1.3,4) to[out=-90,in=90] (1,3);
    \draw[dyellow] (6,3) to[out=90,in=-90] (5.7,4) to[out=90,in=180] (6.5,4.7)
    to[out=0,in=90] (7.3,4) to[out=-90,in=90] (7,3);
    \draw[dyellow] (8,3) to[out=90,in=-90] (7.7,4) to[out=90,in=180] (8.5,4.7)
    to[out=0,in=90] (9.3,4) to[out=-90,in=90] (9,3);
    \fill[white, opacity=.8] (0,2.5) rectangle (11,3);   
    \draw[looseness=.3,thick] (1,0) to[out=-90,in=-90] ++(1,0) to[out=90,in=90] (1,0);
    \draw[looseness=.3,thick] (5,0) to[out=-90,in=-90] ++(1,0) to[out=90,in=90] (5,0);
    \draw[looseness=.3,thick] (7,0) to[out=-90,in=-90] ++(1,0) to[out=90,in=90] (7,0);
    \draw (1,0) to[out=85,in=-90]  (0,3);
    \draw (2,0) to[out=95,in=-90]  (3,3);
    \draw[looseness=3.5] (1,3) to[out=-85,in=-95] (2,3);
    \draw[looseness=2.5] (5,3) to[out=-85,in=-95] (6,3);
    \draw[looseness=3.5] (7,3) to[out=-85,in=-95] (8,3);
    \draw[looseness=3.5] (6,0) to[out=85,in=95]   (7,0);
    \draw (4,3) to [out=-90,in=90] ++(-.5,-1.5) to [out=-90,in=95] (5,0);
    \draw (8,0) to[out=95,in=-90]  (9,3);
    \draw[looseness=1] (4.15,1.6) to[out=-90,in=-90] ++(.8,0) to[out=90,in=90] (4.15,1.6);
    \draw[thin] (4.2,1.48) to[out=130,in=-90] (4.1,1.7);
    \draw[thin] (4.9,1.48) to[out=50 ,in=-90] (5,1.7);
    \node[anchor=base] at (1.5,-.4) {\tiny $3$};
    \node[anchor=base] at (5.5,-.4) {\tiny $1$};
    \node[anchor=base] at (7.5,-.4) {\tiny $2$};
    \draw[dyellow,looseness=1] (.1,4) to[out=-90,in=-90] ++(.8,0) to[out=90,in=90] (.1,4);
    \draw[dyellow,thin] (.15,3.88) to[out=130,in=-90] (.05,4.1);
    \draw[dyellow,thin] (.85,3.88) to[out=50 ,in=-90] (.95,4.1);
    \draw[dyellow,looseness=1] (6.1,4) to[out=-90,in=-90] ++(.8,0) to[out=90,in=90] (6.1,4);
    \draw[dyellow,thin] (6.15,3.88) to[out=130,in=-90] (6.05,4.1);
    \draw[dyellow,thin] (6.85,3.88) to[out=50 ,in=-90] (6.95,4.1);
    \draw[dyellow,looseness=1] (8.1,4) to[out=-90,in=-90] ++(.8,0) to[out=90,in=90] (8.1,4);
    \draw[dyellow,thin] (8.15,3.88) to[out=130,in=-90] (8.05,4.1);
    \draw[dyellow,thin] (8.85,3.88) to[out=50 ,in=-90] (8.95,4.1);
    \draw[bgrey,thick] (10,-.5) -- (10,5);
    \draw[dgreen,thick,looseness=.35] (11,.9) to[out=-90,in=-90] ++(1,0)
    to[out=90,in=90] (11,.9);
    \fill[white,opacity=.8] (10.9,.9) rectangle (12.1,.7);
    \node[dgreen] at (12.2,.9) {\tiny $d$};
    \draw[dyellow] (11,.5) to[out=90,in=-90] (11,1) to[out=90,in=-90] (10.7,2)
    to[out=90,in=-90] (11,2.75)
    to[out=90,in=-90] (10.7,3.5) to[out=90,in=180] (11.5,4.2) to[out=0,in=90] (12.3,3.5)
    to[out=-90,in=90]
    (12,2.75) to[out=-90,in=90] (12.3,2) to[out=-90,in=90] (12,1) -- (12,.5);
    \draw[dyellow,looseness=1] (11.1,2) to[out=-90,in=-90] ++(.8,0) to[out=90,in=90] (11.1,2);
    \draw[dyellow,thin] (11.15,1.88) to[out=130,in=-90] (11.05,2.1);
    \draw[dyellow,thin] (11.85,1.88) to[out=50 ,in=-90] (11.95,2.1);
    \draw[dyellow,looseness=1] (11.1,3.5) to[out=-90,in=-90] ++(.8,0)
    to[out=90,in=90] (11.1,3.5);
    \draw[dyellow,thin] (11.15,3.38) to[out=130,in=-90] (11.05,3.6);
    \draw[dyellow,thin] (11.85,3.38) to[out=50 ,in=-90] (11.95,3.6);
    \draw[looseness=.35,thick] (11,0) to[out=-90,in=-90] ++(1,0) to[out=90,in=90] (11,0);
    \draw[looseness=.35,thick] (11,.5) to[out=-90,in=-90] ++(1,0) to[out=90,in=90] (11,.5);
    \fill[white,opacity=.8] (10.9,.5) rectangle (12.1,.3);
    \draw (11,0) -- (11,.5);
    \draw (12,0) -- (12,.5);
  \end{tikzpicture}
  \caption{Two examples of a decomposition into the ‘yellow’ and the ‘white’
    region, according to a fixed mapping class $\varphi$. In the first case,
    $\varphi$ is given by the product of the Dehn twists along the curves
    $d_1,\dotsc,d_7$, and in the second case, it is just the Dehn twist along
    the single green curve $d$. In the second case, the mapping class $\varphi$
    is $\partial$-irreducible, the cut locus consists of the only isotopy class
    of $d$, oriented as boundary of the yellow region, and the white region is
    just a collar neighbourhood of $\partial \caS$.}
  \label{fig:cutlocus}
\end{figure}
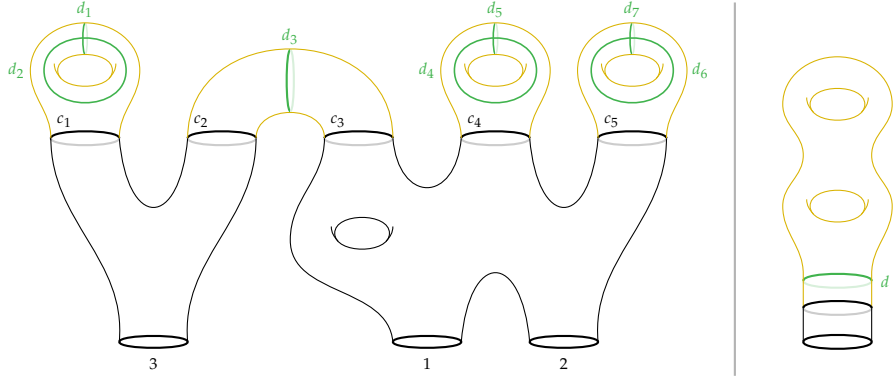
See \iref{Figure}{fig:cutlocus} for an example of the cut locus of a mapping
class obtained as a simple product of Dehn twists.  Note that it is possible
that two curves $c_i$ and $c_j$ cobound a cylinder in $Y$: in this case the two
curves are isotopic as non-oriented simple closed curves, but the isotopy
bringing $c_i$ to $c_j$, spanned by the cylinder of $Y$, ends with an
orientation-reversing diffeomorphism $c_i\cong c_j$, as the two curves inherit
their orientation from $Y$ while being on \new{\emph{opposite}} sides of the
cylinder contained in $Y$. Hence the isotopy classes of the curves
$c_1,\dots,c_h$, considered as oriented curves, are all distinct.

Note that the cut locus of the identity $\one\in\Gamma(\caS,\del\caS)$ is
\emph{empty}, and the white and yellow decomposition consists of a white region
$W=\caS$ and an empty yellow region. Vice versa, the cut locus of a non-trivial
mapping class $\phi\in\Gamma(\caS,\del\caS)$ is always non-empty.

\iref{Definition}{defi:cutlocus} depends a priori on a choice of a maximal simplex in
the fixed-arc complex of $\phi$; there is, moreover, a subtle detail that we
should check to guarantee that \iref{Definition}{defi:cutlocus} is well-posed:
suppose that the disjoint arcs $\alpha'_0,\dotsc,\alpha'_k$ are isotopic to the
disjoint arcs $\alpha_0,\dotsc,\alpha_k$ (that is, $\alpha'_i\sim\alpha_i$ for all
$0\leq i\leq k$), so that the two collections of arcs represent the same maximal
simplex in the fixed-arc complex of $\phi$; then we need to check that the two
collections of arcs give rise to the same collection of isotopy classes of
oriented, disjoint simple closed curves $c_1,\dots, c_h$.

We will prove directly that the cut locus only depends on $\phi$, and not on
the chosen \emph{maximal} simplex (and its representative) in the fixed-arc
complex of $\phi$.

\begin{lem}
  \label{lem:cutlocus}
  Let $\phi$ and $\alpha_0,\dots,\alpha_k$ be as in
  \iref{Definition}{defi:cutlocus}, and let $\beta$ be an arc whose endpoints
  are disjoint from the endpoints of $\alpha_0,\dots,\alpha_k$. Suppose that the
  isotopy class of $\beta$ is fixed by $\phi$; then $\beta$ can be isotoped,
  relative to its endpoints, to an arc $\hat\beta$ lying in a small
  neighbourhood $U$ of $\alpha_0\cup\dotsb\cup\alpha_k\cup\del \caS$.
\end{lem}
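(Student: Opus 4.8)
The plan is as follows. If $\beta$ is inessential it cuts off a disc and is thus isotopic, relative to its endpoints, into a collar of $\del\caS$, hence into any small neighbourhood of $\alpha_0\cup\dotsb\cup\alpha_k\cup\del\caS$; so I may assume $\beta$ essential. I would then apply the enhanced form of \tref{Proposition}{prop:Alexandermethod} to the collection $\alpha_0,\dotsc,\alpha_k,\beta$ — legitimate, since the $\alpha_i$ are pairwise disjoint, $\beta$ is in minimal position with each $\alpha_i$, and $\phi$ fixes all these isotopy classes — obtaining a representative $\Phi_1$ of $\phi$ that fixes pointwise a small neighbourhood $V$ of $\alpha_0\cup\dotsb\cup\alpha_k\cup\beta\cup\del\caS$. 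Inside $V$ I pick a small closed neighbourhood $U$ of $\alpha_0\cup\dotsb\cup\alpha_k\cup\del\caS$ with $\del U$ transverse to $\beta$ and run \tref{Construction}{constr:cutlocus} with this $U$, producing white and yellow regions $W\supseteq U$, $Y$, and curves $c_1,\dotsc,c_h=\del Y\subseteq U$. Since $\Phi_1$ fixes $U$ pointwise it preserves each disc component $D$ of $\caS\setminus U$ (it fixes $\del D\subseteq U$ and cannot interchange $D$ with the non-disc $\caS\setminus D$), and being a diffeomorphism of the disc $D$ fixing $\del D$ it is isotopic to the identity there rel boundary; absorbing these isotopies gives a representative $\Phi'$ of $\phi$ fixing all of $W$ pointwise and still fixing $\beta$ pointwise. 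It then suffices to show that every component of $\beta\cap Y$ is inessential in $Y$: for then isotoping $\beta$ across innermost such discs pushes $\beta$ off $Y$ into $W$, and pushing $\beta$ off the disc components of $\caS\setminus U$ afterwards isotopes it, rel endpoints, into $U$.

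To prove this, suppose some component $\delta$ of $\beta\cap Y$ is essential in $Y$, with endpoints $p\in c_i$ and $q\in c_j$. I first argue that every piece of $W$ cut along $\alpha_0\cup\dotsb\cup\alpha_k$ meets $\del\caS$ — this is clear from $W$ being a regular neighbourhood of $\alpha_0\cup\dotsb\cup\alpha_k\cup\del\caS$ with some discs glued along interior circles — and deduce that $p$ and $q$ can be joined to $\del\caS$ by disjointly embedded arcs $\gamma_p,\gamma_q\subseteq W$ avoiding $\alpha_0\cup\dotsb\cup\alpha_k$, meeting $c_1\cup\dotsb\cup c_h$ only at $p$, resp.\ $q$, disjoint from $\delta$ away from $p,q$, and landing on $\del\caS$ away from all endpoints of the $\alpha_i$. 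Smoothing corners, $\tilde\delta\coloneqq\gamma_p^{-1}*\delta*\gamma_q$ is a properly embedded arc with $\tilde\delta\cap(c_1\cup\dotsb\cup c_h)=\{p,q\}$, and I claim it contradicts the maximality of the simplex $[\alpha_0],\dotsc,[\alpha_k]$ in the fixed-arc complex of $\phi$ (\tref{Definition}{defi:fixedarccomplex}).

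This I would verify point by point. The arc $\tilde\delta$ is disjoint from the $\alpha_i$ by construction; it is fixed by $\phi$, since $\Phi'$ fixes $\gamma_p$ and $\gamma_q$ pointwise (they lie in $W$) and fixes $\delta$ pointwise (it lies in $\beta$); it is not isotopic rel endpoints to any $\alpha_i$, its endpoints having been chosen apart from theirs. That $\tilde\delta$ is essential, and not parallel to any $\alpha_i$, both reduce to one innermost-disc argument: a disc cut off by $\tilde\delta$, or a band exhibiting a parallelism of $\tilde\delta$ with some $\alpha_i$, meets $c_1\cup\dotsb\cup c_h$ only in arcs whose endpoints lie in $\{p,q\}$, so each such arc joins $p$ to $q$; this is impossible if $c_i\neq c_j$ (the disc or band would then be disjoint from $c_1\cup\dotsb\cup c_h$, absurd as $p,q$ lie on it), and if $c_i=c_j$ it produces, across an innermost such arc $\rho\subseteq c_i$, a disc in $Y$ bounded by $\delta\cup\rho$, contradicting the essentiality of $\delta$. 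Hence $[\alpha_0],\dotsc,[\alpha_k],[\tilde\delta]$ span a strictly larger simplex — the contradiction we want — so every component of $\beta\cap Y$ is inessential, and the first paragraph concludes.

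The hard part is the construction and verification in the middle two paragraphs: promoting an essential arc of the yellow region to a genuinely \emph{new} essential fixed arc of $\caS$, so as to contradict maximality. The supporting facts — that every piece of $W$ cut along the $\alpha_i$ touches $\del\caS$, and the innermost-disc bookkeeping separating $\tilde\delta$ from the $\alpha_i$ — are elementary but need the point-set geometry of the white/yellow decomposition to be handled with care; the rest (the reductions pushing $\beta$ off $Y$ and off the disc components) is routine.
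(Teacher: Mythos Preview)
Your proof is correct and takes a genuinely different route from the paper's. The paper argues by induction on $|\beta\cap(\alpha_0\cup\dotsb\cup\alpha_k)|$: at an outermost intersection point it performs a surgery on $\beta$, producing two arcs $\beta',\beta''$ that are still fixed pointwise by the chosen representative $\Phi$ and have strictly fewer intersections with the $\alpha_i$; by induction each can be isotoped into $U$, and $\beta$ is then isotopic to their concatenation. The base case (no intersections) is handled directly by maximality. You instead cut $\beta$ against the curves $c_1,\dots,c_h$ rather than the arcs $\alpha_i$, and show that each piece of $\beta\cap Y$ is inessential by extending a putative essential piece $\delta$ through $W$ to a new fixed arc $\tilde\delta$ contradicting maximality. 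This is more geometric and in effect anticipates the proof of \tref{Lemma}{lem:phiiirreducible}, where the paper extends a hypothetical essential fixed arc in $Y_{i,j}$ through $W$ and then \emph{invokes} the present lemma; your argument fuses the two ideas. The paper's inductive surgery is more self-contained and avoids the point-set bookkeeping of the white/yellow decomposition you flag at the end; your approach makes the role of the cut locus more transparent.

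One small point: when you absorb the disc-component isotopies to pass from $\Phi_1$ to $\Phi'$, the claim that $\Phi'$ still fixes $\beta$ pointwise deserves a word, since the Alexander isotopy on a disc component $D$ need not be stationary on $\beta\cap D$. Cutting $D$ further along the arcs $\beta\cap D$ and applying the Alexander trick to the resulting sub-discs fixes this. In any case you only ever use that $\Phi'$ fixes $\tilde\delta$ pointwise, and since $\gamma_p,\gamma_q\subseteq W$ while $\delta\subseteq Y$ (where $\Phi'=\Phi_1$), this holds regardless.
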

\begin{proof}
  Up to isotoping $\beta$ to another arc, we can assume that $\beta$ is in minimal
  position with respect to the arcs $\alpha_0,\dots,\alpha_k$.
  By \iref{Proposition}{prop:Alexandermethod} we can represent $\phi$ by a
  diffeomorphism $\Phi$ that fixes a closed neighbourhood $U'$ of the union
  $\alpha_0\cup\dotsb\cup\alpha_k\cup\beta\cup\del \caS$. Let $U\subset U'$ be a small
  neighbourhood of $\alpha_0\cup\dotsb\cup\alpha_k\cup\del \caS$.
  
  If $\beta$ is disjoint from the arcs $\alpha_i$, by maximality of the simplex
  $\alpha_0,\dots,\alpha_k$ in the fixed-arc complex of $\phi$, we obtain that
  either $\beta$ is not essential (and can then be isotoped inside a small
  neighbourhood of $\del \caS$, hence inside $U$), or $\beta$ is parallel to one
  of the arcs $\alpha_i$ (and can then be isotoped to a small neighbourhood of
  $\del\caS\cup \alpha_i$, hence inside $U$).

  If $\beta$ is not disjoint from the arcs $\alpha_i$, let $\ell\geq 1$ be the
  number of transverse intersections of $\beta$ with
  $\alpha_0\cup\dotsb\cup\alpha_k$: by induction, let us suppose that the
  statement of the lemma holds whenever $\beta$ is replaced by an arc that can
  be isotoped so as to have at most $\ell-1$ intersection points with the arcs
  $\alpha_i$. Suppose that $p$ is one intersection point of $\beta$ with one arc
  $\alpha_i$: suppose further that the segment $[\alpha_i(0);p]\subset\alpha_i$
  contains no other point of $\alpha_i\cap \beta$ in its interior (this means
  that $p$ is an \emph{outermost} point of $\alpha_i\cap\beta$ along
  $\alpha_i$).

  We can operate a surgery on $\beta$ and produce two arcs $\beta'$ and
  $\beta''$ also contained in $U'$ and transverse to
  $\alpha_0\cup\dotsb\cup\alpha_k$; see \iref{Figure}{fig:surgeryarc}: the arc
  $\beta'$ is obtained by smoothing the concatenation of the segments
  $[\beta(0);p]\subset\beta$ and $[p;\alpha_i(0)]\subset\alpha_i$, whereas the
  arc $\beta''$ is obtained by smoothing the concatenation of the segments
  $[\alpha_i(0);p]\subset\alpha_i$ and $[p;\beta(1)]\subset\beta$. We assume
  that $\beta'$ and $\beta''$ are disjoint, and have an endpoint on a small
  interval of $\del\caS$ centred at $\alpha_i(0)$, on opposite sides with
  respect to $\alpha_i(0)$.
  
  Both $\beta'$ and $\beta''$ are contained in $U'$, hence they are fixed
  pointwise by $\Phi$: in particular the isotopy classes of $\beta'$ and
  $\beta''$ are fixed by $\Phi$. Moreover each of $\beta'$ and $\beta''$ has
  strictly less than $\ell$ intersections with $\alpha_0\cup\dotsb\cup\alpha_k$,
  and hence, by inductive hypothesis, each of $\beta'$ and $\beta''$ can be
  isotoped to lie in $U$, relative to its endpoints.

  Let $\hat\beta'$ and $\hat\beta''$ be the two arcs obtained in this way, and
  assume that $\hat\beta'$ and $\hat\beta''$ are transverse. If $\hat\beta'$ and
  $\hat\beta''$ are not in minimal position, they must form some bigon in
  $\caS$; the possibility of half-bigons in the sense of
  \cite[§\,1.2.7]{FarbMargalit} is irrelevant, since we consider arcs up to
  isotopy relative to the endpoints.  Clearly we can simplify all bigons formed
  by $\hat\beta'$ and $\hat\beta''$ without losing that these two arcs are
  contained in $U$.
  
  Suppose therefore that $\hat\beta'$ and $\hat\beta''$ are in minimal position:
  then they are disjoint because $\beta'$ and $\beta''$ were disjoint (in
  particular, it is automatic that $\hat\beta'$ and $\hat\beta''$ do not form
  half-bigons). The arc $\beta$ is homotopic, relative to its endpoints, to the
  concatenation of the arcs $\hat\beta'$ and $\hat\beta''$, which can be
  connected using a small segment near $\alpha_i(0)$. Note that this
  concatenation gives an embedded arc $\hat\beta$ with the same endpoints as
  $\beta$; since homotopic arcs relative to their endpoints are also isotopic
  relative to their endpoints, we have that $\beta$ is isotopic to $\hat\beta$;
  moreover, $\hat\beta$ lies in $U$.
\end{proof}

\begin{tfigure}
  \centering
  \begin{tikzpicture}[scale=1.2]
    \draw[thick,dblue!20,looseness=.8] (0,2.48) to[out=-10,in=130] (.6,2)
    to[out=-50,in=90] (1.4,.6) -- (1.4,.16);
    \draw[thick,dblue!20,looseness=.4] (-1.2,3.81) to[out=60,in=180] (0,3.525);
    \draw[thick,dblue] (1.4,.16) -- (1.4,-.16);
    \draw[dgreen,thick,looseness=.3] (3,2.5) to[out=90,in=90] (4,2.5)
    to[out=-90,in=-90] (3,2.5);
    \fill[white,opacity=.8] (3,2.5) rectangle (4,2.3);
    \draw[thick,dred!20] (0.6,3.41) to[out=-20,in=110] (1.1,2.6) to[out=-70,in=90] (1.6,.16);
    \draw[thick,dyellow!20] (0.6,3.41) to[out=-20,in=110] (1.05,2.6)
    to[out=-70,in=90] (1.55,.16);
    \draw[thick,red] (1.6,.16) -- (1.6,-.16);
    \draw[thick,dyellow] (1.55,.16) -- (1.55,-.16);
    \node[dgreen] at (4.15,2.5) {\tiny $d$};
    \draw[looseness=1] (3.1,3.5) to[out=-90,in=-90] ++(.8,0) to[out=90,in=90] ++(-.8,0);
    \draw[thin] (3.15,3.38) to[out=130,in=-90] (3.05,3.6);
    \draw[thin] (3.85,3.38) to[out=50 ,in=-90] (3.95,3.6);
    \draw[looseness=1] (-.9,3) to[out=-90,in=-90] ++(1.8,0) to[out=90,in=90] ++(-1.8,0);
    \draw[thin] (-.857,2.82) to[out=125,in=-90] (-.95,3.13);
    \draw[thin] (.857,2.82) to[out=65 ,in=-90] (.95,3.13);
    \draw[thick,dblue,looseness=.3] (2.5,2) to[out=60,in=90] (2.7,-.17);
    \draw[very thick,white,opacity=.8,looseness=.3] (2.515,1.97) to[out=60,in=90] (2.69,.17);
    \draw[thick,dblue,looseness=.3] (2.5,.17) to[out=90,in=-120] (2.5,2);
    \draw[thick,dblue] (.6,.12) -- (.6,.6) to[out=90,in=-90] (-1.23,3)
    to[out=90,in=180] (0,4) to[out=0,in=110]
    (1.6,2.9) to[out=-70,in=90] (2.2,.18);
    \draw[thick,dblue] (.3,.11) -- (.3,.5) to[out=90,in=-90] (-1.4,3)
    to[out=90,in=-120] (-1.2,3.81);
    \draw[thick,dblue] (0,3.525) to[out=0,in=160] (0.6,3.65)
    to[out=-20,in=110] (1.3,2.9) to[out=-70,in=90] (1.9,.18);
    \draw[thick,dblue] (1.2,.15) -- (1.2,.5) to[out=90,in=-50] (.3,2);
    \draw[thick,dblue,looseness=.4] (.3,2) to [out=130,in=215] (0,2.48);
    \draw[thick,dred] (0.9,.15) -- (.9,.65) to[out=90,in=-90] (-1.05,3)
    to[out=90,in=180] (0,3.7)
    to[out=0,in=160] (0.6,3.41);
    \draw[thick,lila] (0.85,.15) -- (.85,.61) to[out=90,in=-90] (-1.1,3)
    to[out=90,in=180] (0,3.75)
    to[out=0,in=160] (.3,3.67) to[out=20,in=160] (0.6,3.7) to[out=-20,in=110] (1.35,2.9)
    to[out=-70,in=90] (1.95,.18);
    \draw[very thick,dyellow] (1.835,.67) -- (1.81,.57) -- (1.87,.57) -- (1.835,.67);
    \draw[very thick,lila] (1.935,.67) -- (1.91,.57) -- (1.97,.57) -- (1.935,.67);
    \draw[very thick,dblue] (.6,.63) -- (.57,.53) -- (.63,.53) -- (.6,.62);
    \draw[very thick,lila] (.85,.55) -- (.82,.65) -- (.88,.65) -- (.85,.55);
    \draw[very thick,dred] (.9,.55) -- (.87,.65) -- (.93,.65) -- (.9,.55);
    \draw[very thick,dblue] (2.478,.66) -- (2.46,.56) -- (2.52,.56) -- (2.478,.66);
    \node[dblue] at (-1.35,3.88) {\tiny $\alpha_2$};
    \node[dblue] at (2.5,2.13) {\tiny $\alpha_0$};
    \node[dred] at (1.6,-.3) {\tiny $\beta$};
    \node[dblue] at (1.4,-.3) {\tiny $\alpha_1$};
    \node[dblue] at (1.55,3.83) {\tiny $\alpha_3$};
    \node[dyellow] at (.5,3.26) {\tiny $\beta'$};
    \draw[very thick,white] (1.41,3.81) to[out=200,in=70] (1.2,3.6);
    \draw[thin,dblue!50] (1.41,3.81) to[out=200,in=70] (1.2,3.6);
    \draw[thick,dyellow] (.7,3.35) to[out=135,in=-25] (.58,3.47)
    to[out=155,in=-40] (.368,3.596) to[out=35,in=160] (0.6,3.6)
    to[out=-20,in=110] (1.25,2.9) to[out=-70,in=90] (1.85,.18);
    \draw[very thick,dblue] (1.885,.67) -- (1.86,.57) -- (1.92,.57) -- (1.885,.67);
    \node[lila] at (1.95,0) {\tiny $\beta''$};
    \draw (0,0) -- (0,.5) to[out=90,in=-90] (-1.6,3) to[out=90,in=180] (0,4.2)
    to[out=0,in=130] (1.5,3.5)
    to[out=-50,in=180] (2.5,2) to[out=0,in=-90] (3,2.5) to [out=90,in=-90] (2.7,3.5)
    to[out=90,in=180]
    (3.5,4.2) to[out=0,in=90] (4.3,3.5) to[out=-90,in=90] (4,2.5) -- (4,0);
    \draw[looseness=.15,thick] (0,0) to[out=-90,in=-90] ++(4,0) to[out=90,in=90] (0,0);
  \end{tikzpicture}
  \caption{If $\varphi$ is the Dehn twist along the curve $d$, then the blue
    arcs $\alpha_0,\dotsc,\alpha_3$ constitute a maximal simplex in the
    fixed-arc complex of $\varphi$; the subset $U$ is a small neighbourhood of
    the union of the blue arcs and the black boundary curve.  The red arc
    $\beta$ intersects $\alpha_2$ transversally and the surgery produces the
    yellow and violet arcs $\beta'$ and $\beta''$.}
    \label{fig:surgeryarc}
\end{tfigure}

\begin{prop}
  \label{prop:cutlocus}
  Let $\phi\in\Gamma(\caS,\del\caS)$ and let $\alpha_0,\dots,\alpha_k$ and
  $\beta_0,\dots,\beta_{k'}$ be two sequences of arcs representing two different
  maximal simplices in the fixed-arc complex of $\phi$; we assume that all arcs
  $\alpha_0,\dots,\alpha_k,\beta_0,\dots,\beta_{k'}$ are in minimal
  position. Then the associated cut loci constitute the same isotopy class of an
  oriented multicurve in $\caS$.
\end{prop}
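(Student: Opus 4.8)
The plan is to prove that the two \emph{white} subsurfaces $W_\alpha,W_\beta\subseteq\caS$ produced by \tref{Construction}{constr:cutlocus} from the two simplices---and hence the two \emph{yellow} subsurfaces $Y_\alpha,Y_\beta$---are ambient isotopic in $\caS$. This is enough: an ambient isotopy of $\caS$ taking $W_\alpha$ to $W_\beta$ takes $Y_\alpha$ to $Y_\beta$, and being orientation-preserving it carries $\del Y_\alpha\setminus\del\caS$ to $\del Y_\beta\setminus\del\caS$ respecting the boundary orientations induced from the yellow sides, which is precisely the assertion that the two cut loci agree as isotopy classes of oriented multicurves. The degenerate cases are immediate: if $\caS$ is of type $\Sigma_{0,1}$ there are no essential arcs; if $\caS$ is of type $\Sigma_{0,2}$ then either $\phi\neq\one$ and the fixed-arc complex is empty, or $\phi=\one$ and it is discrete, and a small regular neighbourhood of any single essential arc together with $\del\caS$ is already all of $\Sigma_{0,2}$, so $W=\caS$ regardless of the simplex. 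We may therefore assume $\chi(\caS)<0$; fixing a hyperbolic metric with geodesic boundary, every finite family of arcs has geodesic representatives in pairwise minimal position, which we use freely.

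Let $U_\alpha$ be a small closed neighbourhood of $\alpha_0\cup\dots\cup\alpha_k\cup\del\caS$ as in \tref{Construction}{constr:cutlocus}. The first step is to move all of $\beta_0,\dots,\beta_{k'}$ inside $U_\alpha$: applying \tref{Lemma}{lem:cutlocus} to each $\beta_j$ separately produces arcs $\hat\beta_j\subseteq U_\alpha$ with $\hat\beta_j\sim\beta_j$ rel endpoints, and one then cleans these up---staying inside $U_\alpha$ and inside the isotopy classes, simplifying bigons exactly as at the end of the proof of \tref{Lemma}{lem:cutlocus}---so that they become pairwise disjoint. Equivalently, one re-runs the surgery argument of \tref{Lemma}{lem:cutlocus} with the total intersection number $\#\big((\beta_0\cup\dots\cup\beta_{k'})\cap(\alpha_0\cup\dots\cup\alpha_k)\big)$ as the induction parameter, invoking \tref{Proposition}{prop:Alexandermethod} at each surgery step---applied to the $\alpha$-family and the single $\beta_j$ being surgered---to see that the surgered arcs are again fixed by $\phi$. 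Once $\beta_0,\dots,\beta_{k'}$ are disjoint arcs lying in $U_\alpha$, their isotopy class as an arc system is unchanged, so a sufficiently small regular neighbourhood $U_\beta$ of their union with $\del\caS$ is ambient isotopic to the neighbourhood used to build $W_\beta$, and we may arrange $U_\beta\subseteq\operatorname{int} U_\alpha$.

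To conclude, compare the two decompositions. From $U_\beta\subseteq\operatorname{int} U_\alpha$ one obtains, up to isotopy, $W_\beta\subseteq W_\alpha$: a disc component $D$ of $\caS\setminus U_\beta$ cannot contain a non-disc component $C$ of $\caS\setminus U_\alpha$, since $\phi$ is represented by a diffeomorphism fixing $W_\beta\supseteq D\supseteq C$ pointwise, whereas maximality of the simplex $\alpha_0,\dots,\alpha_k$ forces $\phi$ to act non-trivially on each non-disc component of $\caS\setminus U_\alpha$---otherwise one could enlarge the simplex by an essential, $\phi$-fixed arc running from $\del\caS$ into such a component. Hence every disc component of $\caS\setminus U_\beta$ already sits in $W_\alpha=U_\alpha\cup(\text{disc components})$, so $W_\beta=U_\beta\cup(\text{discs})\subseteq W_\alpha$. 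Exchanging the roles of the two simplices gives the reverse inclusion; since two $\pi_1$-injective codimension-zero subsurfaces of $\caS$ each of which is isotopic to a subsurface of the other are isotopic, we conclude $W_\alpha\simeq W_\beta$.

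I expect the core of the argument to lie in two places. The first is the push-in of the second paragraph: upgrading the single-arc statement of \tref{Lemma}{lem:cutlocus} to the whole disjoint family while keeping the arcs pairwise disjoint and inside $U_\alpha$. The second, and more delicate, is the passage from the inclusion $U_\beta\subseteq\operatorname{int} U_\alpha$ to the equality $W_\alpha\simeq W_\beta$: this is not a formality, because a regular neighbourhood of $\del\caS$ together with a maximal fixed arc-system need not exhaust the locus on which $\phi$ is isotopically trivial, so one genuinely has to use the maximality of \emph{both} simplices to rule out a disc or $\phi$-trivial piece of the yellow region that is visible to one simplex but hidden inside a disc of the other.
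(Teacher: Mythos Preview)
Your overall strategy matches the paper's: use \tref{Lemma}{lem:cutlocus} to arrange $U_\beta\subseteq U_\alpha$ (the paper does the symmetric $U_\alpha\subseteq U_\beta$), deduce an inclusion of white regions, reverse roles, and conclude. The gap is in your justification of $W_\beta\subseteq W_\alpha$. You argue that a non-disc component $C$ of $\caS\setminus U_\alpha$ cannot lie inside a disc component $D$ of $\caS\setminus U_\beta$, because some representative of $\phi$ fixing $W_\beta$ is the identity on $C$, while maximality of the $\alpha$-simplex forces $\phi$ to act non-trivially on $C$. But these two assertions concern \emph{different} representatives: the one fixing $W_\beta$ need not fix $U_\alpha$, and ``$\phi$ acts non-trivially on $C$'' only makes sense as a statement about $[\Phi|_C]\in\Gamma(C,\partial C)$ for a $\Phi$ that already fixes $\partial C\subset\partial U_\alpha$. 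Since the extension-by-identity map $\Gamma(C,\partial C)\to\Gamma(D,\partial D)=1$ is trivial, knowing that $\Phi|_D$ is isotopic rel $\partial D$ to the identity gives no information about $[\Phi|_C]$. One can patch this by starting from a representative fixing $U_\alpha$ and Alexander-tricking it on $D$ to fix both $U_\alpha$ and $D$, then carefully producing the new essential, non-parallel fixed arc; but this is real work you have not done, and your sketch ``an arc running from $\del\caS$ into such a component'' does not address why the extended arc is essential in $\caS$ and not parallel to one of the $\alpha_i$.

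The paper sidesteps the issue entirely: once $U_\alpha\subseteq U_\beta$, the inclusion $W_\alpha\subseteq W_\beta$ is purely topological. If $D$ is a disc component of $\caS\setminus U_\alpha$, then every component of $\caS\setminus U_\beta$ lying in $D$ is itself a disc---ultimately because $U_\beta$ is a regular neighbourhood of a $1$-complex meeting $\partial\caS$, so no component of $U_\beta$ is a disc, hence no boundary circle of $U_\beta$ can bound a disc inside $U_\beta$; an innermost-circle argument in $D$ then rules out non-disc complementary pieces. Maximality is not used at this stage; it was already spent in \tref{Lemma}{lem:cutlocus}. For the endgame, your appeal to the general fact about mutually-embeddable $\pi_1$-injective subsurfaces is fine; the paper instead shows directly, by counting components, genera, and boundary curves, that $W_\beta\setminus W_\alpha$ is a union of annuli matching the $c_i$ to the $c'_j$.
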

\begin{proof}
  Let $U_{\alpha}$ be a closed small neighbourhood of
  $\alpha_0\cup\dotsb\cup\alpha_k\cup\del \caS$ and $U_{\beta}$ be a closed
  small neighbourhood of $\beta_0\cup\dotsb\cup\beta_{k'}\cup\del \caS$; let
  $W_{\alpha}$ and $W_{\beta}$ be obtained from $U_{\alpha}$ and $U_{\beta}$ by
  adjoining the disc components of $\caS\setminus U_{\alpha}$ and
  $\caS\setminus U_{\beta}$, respectively; see \iref{Definition}{defi:cutlocus}.
  
  By \iref{Lemma}{lem:cutlocus} we can find an isotopy of the identity of
  $(\caS,\del\caS)$ bringing $U_{\alpha}$ in the interior of $U_{\beta}$, and
  hence in the interior of $W_{\beta}$: without loss of generality, in the
  following assume $U_{\alpha}\subseteq U_{\beta}\subseteq W_{\beta}$.
  If $D$ is a disc component of $\caS\setminus U_{\alpha}$, then $D\setminus
  U_{\beta}$ is a union of discs contained in $\caS\setminus U_{\beta}$, and
  therefore $D\subset W_{\beta}$. It follows that $W_{\alpha}\subseteq W_{\beta}$.
  Since every component of $W_{\beta}$ touches $\del \caS$, the map
  $\pi_0(\del\caS)\to\pi_0(W_\beta)$ is surjective. This map factors through the
  canonical map $\pi_0(W_{\alpha})\to\pi_0(W_{\beta})$, as
  $\del\caS\subset W_\alpha$. We conclude that
  $\pi_0(W_{\alpha})\to\pi_0(W_{\beta})$ is surjective.
  
  By the same argument we can find an isotopy of the identity of $\caS$ bringing
  $U_{\beta}$ in the interior of $U_{\alpha}$, and hence $W_{\beta}$ in the
  interior of $W_{\alpha}$: as a consequence we can obtain a surjection
  $\pi_0(W_{\beta})\to\pi_0(W_{\alpha})$, showing that
  $\#\pi_0(W_{\beta})=\#\pi_0(W_{\alpha})$ and that the map
  $\pi_0(W_{\alpha})\to\pi_0(W_{\beta})$ induced by the inclusion is in fact a
  bijection.\enlargethispage{\baselineskip}
  
  We next prove that each component $V$ of $W_{\beta}\setminus W_{\alpha}$ is a
  cylinder with one boundary curve equal to some $c_i$ and one boundary curve
  equal to some $c'_j$. Fix a component $\bar W_{\beta}\subset W_{\beta}$ and
  let $\bar W_{\alpha}\subset W_{\alpha}$ be the unique component of
  $W_{\alpha}$ contained in $\bar W_{\beta}$. We know that, conversely,
  $\bar W_{\beta}$ can be embedded in $\bar W_{\alpha}$: since the genus is
  weakly increasing along embeddings of orientable surfaces with boundary, the
  surfaces $\bar W_{\alpha}$ and $\bar W_{\beta}$ must have the same genus; this
  implies in particular every component $V$ of
  $\bar W_{\beta}\setminus\bar W_{\alpha}$ has genus $0$ and touches at most one
  curve $c_i\in\del\bar W_{\alpha}$.
  
  Since $\bar W_{\alpha}$ and $\bar W_{\beta}$ are connected, we have that a
  component $V$ of $\bar W_{\beta}\setminus\bar W_{\alpha}$ touches exactly one
  curve $c_i\subset\del\bar W_{\alpha}$, and since $V$ cannot be a disc, we
  obtain that $V$ is a surface of genus $0$ with at least two boundary
  components; more precisely, one boundary component of $V$ is a curve
  $c_i\subset\del \bar W_{\alpha}$, and all other boundary components are curves
  $c'_j\subset\del\bar W_{\beta}$.
  
  This proves in particular that the number of boundary components of
  $\bar W_{\beta}$ is greater or equal to the number of boundary components of
  $\bar W_{\alpha}$; we can again reverse the r\^oles of $\alpha$ and $\beta$
  and conclude that $\bar W_{\alpha}$ and $\bar W_{\beta}$ have the same number
  of boundary component; this in turn implies that every connected component $V$
  of $\bar W_{\beta}\setminus\bar W_{\alpha}$ is a cylinder with one boundary
  curve equal to some $c_i$ and one boundary curve equal to some $c'_j$.
  
  \new{Thus} we obtain a bijection between the curves $c_1,\dots,c_h$ and the curves
  $c'_1,\dots,c'_{h'}$, showing in particular that $h=h'$; note also that the
  bijection associates with each curve $c_i$ a curve $c'_j$ in the same isotopy
  class of oriented curves.
\end{proof}

When referring to the \emph{cut locus} of a mapping class
$\phi\in\Gamma(\caS,\del\caS)$ we will henceforth mean the cut locus of $\phi$
with respect to any maximal simplex in the fixed-arc complex of $\phi$.
The cut locus of a mapping class behaves well under conjugation of the mapping
class, as explained in the following Lemma.

\begin{lem}
  \label{lem:cutlocusequiv}
  Let $\phi,\psi\in\Gamma(\caS,\del\caS)$ be mapping classes, let $\Psi$ be a
  diffeomorphism representing $\psi$, and let $[c_1,\dots,c_h]$ be the cut locus
  of $\phi$; then $[\Psi(c_1),\dots,\Psi(c_h)]$ is the cut locus of
  $\psi\phi\psi^{-1}$. In particular, if $\phi$ and $\psi$ commute, then $\psi$
  preserves the cut locus of $\phi$ as an unordered collection of isotopy
  classes of oriented simple closed curves.
\end{lem}
\begin{proof}
  Let $\alpha_0,\dots,\alpha_k$ be disjoint arcs representing a maximal simplex
  in the fixed-arc complex of $\phi$, and represent $\phi$ by a diffeomorphism
  $\Phi$ fixing pointwise a small neighbourhood $U$ of
  $\alpha_0\cup\dots\cup\alpha_k\cup\del\caS$; finally, represent the cut locus
  of $\phi$ by curves $c_1,\dots,c_h$ contained in $U$. Then $\Psi$ induces an
  isomorphism from the fixed-arc complex of $\phi$ to the fixed arc complex of
  $\psi\phi\psi^{-1}$; in particular $\Psi(\alpha_0),\dots,\Psi(\alpha_k)$ are
  disjoint arcs representing a maximal simplex in the fixed-arc complex of
  $\psi\phi\psi^{-1}$. Moreover $\Psi(U)$ is a small neighbourhood of
  $\Psi(\alpha_0)\cup\dots\cup\Psi(\alpha_k)\cup\del\caS$ which is fixed
  pointwise by the representative $\Psi\Phi\Psi^{-1}$ of $\psi\phi\psi^{-1}$.
  We can represent the cut locus of $\phi$ by curves
  $c_1,\dots,c_h\subset\del U$; then
  \[\Psi(c_1),\dots,\Psi(c_h)\subset\del\Psi(U)\]
  are automatically curves representing the cut locus of $\psi\phi\psi^{-1}$,
  according to \iref{Construction}{constr:cutlocus}.
\end{proof}

\section{Centralisers of mapping classes}
\label{sec:centralisers}
We fix a surface $\caS\cong\Sigma_{g,n}$ and a mapping class
$\phi\in\Gamma(\caS,\del\caS)\cong\Gamma_{g,n}$ as in the previous section. In
this section, we prove a structural result for the centraliser
$Z(\phi,\Gamma(\caS,\del\caS))$ of $\phi$ in $\Gamma(\caS,\del\caS)$; see
\iref{Proposition}{prop:Zdelphistructure}.

\subsection{Yellow components, similarity and irreducibility}
\label{subsec:similar}

We fix oriented simple closed curves $c_1,\dots,c_h\subset \caS$ representing
the cut locus of $\phi$, and we let $W\cup Y$ be the associated decomposition of
$\caS$ into its white and yellow regions.
Recall from \iref{Construction}{constr:cutlocus} that the curves $c_1,\dots,c_h$
inherit an orientation from $Y$: we fix an orientation-compatible
parametrisation of each curve $c_i$, i.e.\ an identification with $S^1$. Note
that in this way $c_1,\dots,c_h$ are \emph{incoming} curves for $W$ and
\emph{outgoing} for $Y$. In fact we have
$\del Y=\del^{\text{out}}Y=c_1\cup\dotsb\cup c_h =\del^{\text{in}}W$, whereas
$\del^{\text{out}}W=\del^{\text{out}}\caS=\del\caS$.

We fix a representative $\Phi\colon\caS\to\caS$ of $\phi$ which fixes the white
region $W$ pointwise: to see that this is possible, choose arcs
$\alpha_0,\dots,\alpha_k$ in a maximal simplex of the fixed-arc complex of
$\phi$, choose a first representative $\Phi'$ of $\phi$ fixing pointwise a small
neighbourhood $U$ of $\alpha_0\cup\dots\cup\alpha_k\cup\del\caS$, construct $W$
and $Y$ starting from $U$, and use that the group $\Diff(D^2,\del D^2)$ of
diffeomorphisms of a disc relative to its boundary is contractible, and in
particular connected, in order to isotope $\Phi'$ relative to $U$ to a
diffeomorphism $\Phi$ fixing $W$ pointwise.

We note that this representative $\Phi$ is unique up to an isotopy that is
stationary on $W$: to see this, first note that there is a fibration
\[\Diff(Y,\del Y)\hookrightarrow \Diff(\caS,\del\caS)\overset{p}{\to}
  \mathrm{Emb}_{\del^{\text{out}}}(W,\caS),\]
where $\mathrm{Emb}_{\del^\text{out}}(W,\caS)$ denotes the space of embeddings
of $W$ into $\caS$ restricting to the identity on the boundary
$\del^{\text{out}}W=\del\caS$.  A result of Earle–Schatz \cite{EarleSchatz}
ensures that $\Diff(\caS,\del \caS)$ has contractible components for every
compact orientable surface $\caS$ such that every connected component of $\caS$
is connected; in particular, $\Diff(Y,\del Y)$ also has contractible
components. A result of Gramain \cite[Thm.\,5]{Gramain} ensures that for
disjoint, properly embedded arcs $\alpha_0,\dots,\alpha_k\subset\caS$, the space
$\mathrm{Emb}_{\coprod\del\alpha_i}(\coprod\alpha_i,\caS)$ has also contractible
components; this, together with contractibility of $\Diff(D^2,\del D^2)$,
implies that also $\mathrm{Emb}_{\del^{\text{out}}}(W,\caS)$ has contractible
connected components. Thus, all spaces involved in the above fibration have
contractible connected components; in particular the component
$\Diff(\caS,\del\caS)_{\phi}$ intersects the fibre
$p^{-1}(W\subset\caS)\cong \Diff(Y,\del Y)$ in a connected or empty
subspace, and the representative $\Phi$ of $\phi$ witnesses that this
intersection is non-empty, hence contractible, in particular connected.

For each path component $P\subseteq Y$, the diffeomorphism $\Phi$ restricts to
$\Phi|_P\colon P\to P$, giving an element $\phi_P\in\Gamma(P,\del P)$.

\begin{defi}
  \label{defi:similar}
  Two path components $P$ and $P'$ of $Y$ are \emph{similar} if there is a
  diffeomorphism $\Xi\colon P\to P'$ preserving the boundary parametrisation and
  such that $\smash{\phi_P=(\phi_{P'})^\Xi}$. Note that the path components of
  $\del P$ are not equipped with a preferred order, as well as the path
  components of $\del P'$; yet \iref{Definition}{defi:presbparam} is meaningful
  here; see also \iref{Remark}{rem:Xiconj}.
\end{defi}

\begin{nota}
  \label{nota:Yi}
  We write $Y=\coprod_{i=1}^r\coprod_{j=1}^{s_i}Y_{i,j}$ where
  $Y_{1,1},\dotsc,Y_{r,s_r}\subseteq Y$ are the connected components of $Y$ and
  $Y_{i,j}$ is similar to $Y_{i',j'}$ if and only if $i=i'$. We also let
  $Y_i\coloneqq \coprod_{j=1}^{s_i}Y_{i,j}$.  Here $r\ge0$ is the number of
  similarity classes of components of $Y$ (it can be $0$ if $Y$ is empty, i.e.\
  if $\phi$ is the identity mapping class), whereas $s_i\ge1$ is the number of
  components of $Y$ belonging to the $i$\textsuperscript{th} similarity class.

  For each $1\le i\le r$, there are unique $g_i\ge0$ and $n_i\ge1$ such that
  $Y_{i,j}$ is of type $\Sigma_{g_i,n_i}$.  We denote by
  $\phi_{i,j}\in\Gamma(Y_{i,j},\del Y_{i,j})$ the class represented by the
  restriction $\Phi|_{Y_{i,j}}$, Moreover, we fix diffeomorphisms
  $\Xi_{i,j}\colon Y_{i,j}\to \Sigma_{g_i,n_i}$ and assume that $\Xi_{i,j}$
  preserves the boundary parametrisation.

  The conjugation by $\Xi_{i,j}$ induces an identification
  $\Gamma(Y_{i,j},\del Y_{i,j})\to\Gamma_{g_i,n_i}$, under which $\phi_{i,j}$
  corresponds to some element
  $\smash{\bar\phi_{i,j}\coloneqq\smash{(\phi_{i,j})}^{\smash{\Xi_{i,j}}}}\in\Gamma_{g_i,n_i}$.
  Up to replacing $\Xi_{i,j}$ by another diffeomorphism
  $Y_{i,j}\to \Sigma_{g_i,n_i}$, we can assume that
  $\bar\phi_{i,j}\in\Gamma_{g,n}$ \emph{coincides} with $\fg(\bar\phi_{i,j})$,
  i.e.\ it is the representative of its own conjugacy class (see
  \iref{Notation}{nota:cG}). Note that the diffeomorphism replacing $\Xi_{i,j}$
  is not required to induce the same bijection of sets of boundary components as
  $\Xi_{i,j}$. It can be helpful to remark that $\pi_0(\del Y_{i,j})$ is not
  equipped a priori with a preferred order, and only after choosing $\Xi_{i,j}$
  we obtain an order on $\pi_0(\del Y_{i,j})$ by pulling back the canonical
  order on $\pi_0(\del\Sigma_{g_i,n_i})$.  Under the assumption that the
  diffeomorphisms $\Xi_{i,j}$ are well-chosen we also have
  $\bar\phi_{i,j}=\bar\phi_{i,j'}$ for all $1\le i\le r$ and $1\le j,j'\le s_i$.
  We therefore write $\bar\phi_i\coloneqq\bar\phi_{i,j}$.
\end{nota}
Note that, counting the components of $\del Y$, we obtain $h=\sum_{i=1}^r n_i\cdot s_i$.

\begin{lem}
 \label{lem:phiiirreducible}
 In the situation above, for each $1\leq i\leq r$ and $1\le j\le s_i$ we have
 that $\phi_{i,j} \in \Gamma(Y_{i,j},\del Y_{i,j})$ is $\del$-irreducible.
\end{lem}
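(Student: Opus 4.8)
The plan is to argue by contradiction: suppose that $\phi_{i,j}\in\Gamma(Y_{i,j},\del Y_{i,j})$ is \emph{not} $\del$-irreducible. Since $Y_{i,j}$ is of type $\Sigma_{g_i,n_i}$ with $n_i\ge 1$, it is not a disc, so the failure of $\del$-irreducibility means that the fixed-arc complex of $\phi_{i,j}$ is non-empty: there exists an essential arc $\delta$ in $Y_{i,j}$ whose isotopy class is fixed by $\phi_{i,j}$. The goal is to promote $\delta$ to an essential arc in the ambient surface $\caS$ that is fixed by $\phi$ and disjoint (up to isotopy) from the maximal fixed-arc simplex $\alpha_0,\dots,\alpha_k$, contradicting its maximality.

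First I would recall that $\Phi$ was chosen to fix the white region $W$ pointwise, and that $Y_{i,j}\subset Y$ is one of the components onto which $\Phi$ restricts as $\Phi|_{Y_{i,j}}$, representing $\phi_{i,j}$. Viewing $\delta$ as an arc in $\caS$ with both endpoints on $\del Y_{i,j}=c_{i_1}\cup\dots\cup c_{i_{n_i}}$ (outgoing boundary curves of $Y$, which are \emph{interior} curves of $\caS$, not on $\del\caS$), this $\delta$ is \emph{not} yet an arc in the sense of \tref{Definition}{defn:arc} since its endpoints lie on the cut curves rather than on $\del\caS$. The remedy is to extend $\delta$ through the white region: each curve $c_i$ is an incoming boundary curve of $W$, and since every component of $W$ touches $\del\caS$ (\tref{Construction}{constr:cutlocus}), one can attach to each endpoint of $\delta$ an embedded arc inside $W$ running from $c_i$ out to $\del\caS$, transverse to $\del\caS$. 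After smoothing, this yields an embedded arc $\hat\delta$ in $\caS$ with $\hat\delta^{-1}(\del\caS)=\{0,1\}$. Since $\Phi$ fixes $W$ pointwise and fixes the isotopy class of $\delta$ rel endpoints inside $Y_{i,j}$, it follows that $\Phi$ fixes the isotopy class of $\hat\delta$ rel endpoints in $\caS$; and $\hat\delta$ is essential in $\caS$ because $\delta$ is essential in $Y_{i,j}$ (a disc cut off by $\hat\delta$ would restrict to a disc cut off by $\delta$, using that the extension arcs lie in the collar-like white part and that $Y_{i,j}$ is incompressible in $\caS$).

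Next I would arrange that $\hat\delta$ is disjoint from $\alpha_0,\dots,\alpha_k$ and not parallel to any of them. By construction $\hat\delta$ meets $Y_{i,j}$ in $\delta$ and meets $W$ only in the two short extension arcs; isotoping the extension arcs within $W$ we may push them into the small neighbourhood $U$ of $\alpha_0\cup\dots\cup\alpha_k\cup\del\caS$ used to build $W$ and $Y$, or in fact keep them inside $W$ which is disjoint from the $\alpha_i$ except near $\del\caS$. A small transversality argument, together with an innermost-intersection / bigon-removal step exactly as in \tref{Lemma}{lem:cutlocus}, lets us remove all intersections of $\hat\delta$ with the $\alpha_i$; and $\hat\delta$ cannot be parallel to any $\alpha_i$, because $\hat\delta$ enters the non-disc region $Y_{i,j}$ essentially while each $\alpha_i$ together with $\del\caS$ bounds (in the complement) only disc regions that were absorbed into $W$. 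Hence $\{\alpha_0,\dots,\alpha_k,\hat\delta\}$ is a larger simplex in the fixed-arc complex of $\phi$, contradicting maximality. Therefore the fixed-arc complex of $\phi_{i,j}$ is empty and $\phi_{i,j}$ is $\del$-irreducible.

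The main obstacle is the middle step: making precise the claim that an arc $\delta$ which is essential \emph{in the piece $Y_{i,j}$} extends to an arc that is essential \emph{in $\caS$} and, crucially, \emph{non-parallel} to all of the $\alpha_i$ — i.e.\ that the extension does not accidentally create a disc or a parallelism that was hidden by the cut. This is where one must use carefully that $Y_{i,j}$ is a component of the yellow region, that $W$ consists of the neighbourhood $U$ together with \emph{only} disc components of $\caS\setminus U$, and that the $\alpha_i$ form a \emph{maximal} non-parallel disjoint fixed family; the bigon/half-bigon bookkeeping is identical in spirit to the proof of \tref{Lemma}{lem:cutlocus} and should be invoked rather than redone.
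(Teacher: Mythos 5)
Your opening move — extend the fixed essential arc $\delta\subset Y_{i,j}$ through the white region to an arc $\hat\delta$ in $\caS$ with endpoints on $\del\caS$, and observe that $\Phi$ fixes its isotopy class — is exactly the first step of the paper's proof. After that the two arguments diverge, and yours has a genuine gap that the paper's route is designed to avoid.

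You want to conclude by showing that $\hat\delta$, after bigon removal, is disjoint from and non-parallel to $\alpha_0,\dots,\alpha_k$, giving a larger simplex. The load-bearing claim is that $\hat\delta$ ``cannot be parallel to any $\alpha_i$, because $\hat\delta$ enters the non-disc region $Y_{i,j}$ essentially.'' This is precisely what fails: \tref{Lemma}{lem:cutlocus} tells you that \emph{any} essential arc fixed up to isotopy by $\phi$, in particular $\hat\delta$, can be isotoped rel endpoints to lie in the small neighbourhood $U$ of $\alpha_0\cup\dots\cup\alpha_k\cup\del\caS$, hence into $W$. So the bigon-removal process you invoke will in general push $\hat\delta$ \emph{out} of $Y_{i,j}$ entirely, after which it either becomes inessential or parallel to one of the $\alpha_i$ — consistent with maximality of the simplex and yielding no contradiction. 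The non-parallelism you need cannot be guaranteed.

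The paper's proof turns this ``obstacle'' into the contradiction itself. Having extended $\beta$ (your $\delta$) to an arc $\alpha$ (your $\hat\delta$) fixed by $\Phi$, it applies \tref{Lemma}{lem:cutlocus} to isotope $\alpha$ into $W$; but $\alpha$ meets $\del Y_{i,j}$ in exactly two points, the endpoints of $\beta$, and $\alpha\cap Y_{i,j}=\beta$. If $\alpha$ can be pushed off $Y_{i,j}$, those two intersections with $\del Y_{i,j}$ are inessential, so $\alpha$ and $\del Y_{i,j}$ bound a bigon; since the segment of $\alpha$ between the two intersection points is $\beta$, that bigon lies in $Y_{i,j}$ and is bounded by $\beta$ and a subarc of a $c_\ell$ — contradicting the assumption that $\beta$ is essential \emph{in $Y_{i,j}$}. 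Thus the contradiction is with the essentiality of $\beta$ in the piece, not with the maximality of the simplex; this is shorter and avoids the parallelism bookkeeping entirely. You should replace the final paragraph of your argument with this bigon step.
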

\begin{proof}
  Suppose that there is an essential arc $\beta\subset Y_{i,j}$ (i.e.\ the
  endpoints of $\beta$ are on $\del Y_{i,j}$) that is fixed up to isotopy by
  $\phi_{i,j}$. Then we can isotope $\Phi$ relative to
  $\caS\setminus \smash{\mathring{Y}_{i,j}}$ so that $\Phi$ fixes $\beta$
  pointwise; without loss of generality, we assume that $\Phi$ already fixes
  $\beta$ pointwise.

  We can extend $\beta$ to an arc $\alpha\subset \caS$ with endpoints on
  $\del \caS$ by joining the endpoints of $\beta$ inside $W$ with $\del\caS$.
  Then $\Phi$ fixes $\alpha$ pointwise. By \iref{Lemma}{lem:cutlocus}, we can
  isotope $\alpha$ into the region $W$. This implies that $\alpha$ is not in
  minimal position with respect to $\del Y_{i,j}$, and therefore $\alpha$ must
  form a bigon with the multicurve $\del Y_{i,j}$. Since $\alpha$ intersects
  $\del Y_{i,j}$ in exactly two points, namely the endpoints of $\beta$, there
  must be a bigon with one side equal to $\beta$ and the other contained in
  $\del Y_{i,j}$.  This bigon is contained in $Y_{i,j}$, contradicting the
  assumption that $\beta$ is essential in $Y_{i,j}$.
\end{proof}

\subsection{The group \texorpdfstring{$\tZ(\phi)$}{Ẑ(φ)} and its
  relation to \texorpdfstring{$Z(\phi,\Gamma(\caS,\del\caS))$}{Z(φ,Γ(S,∂S))}}

In this subsection, we introduce a certain group $\tZ(\phi)$, built out of small
mapping class groups and symmetric groups. The peculiarity of $\tZ(\phi)$ is
that it admits a natural map
$\epsilon\colon \tZ(\phi)\to
Z(\phi,\Gamma(\caS,\del\caS))\subset\Gamma(\caS,\del\caS)$. In the next
subsection, we will identify the kernel of $\epsilon$, and we will prove in the
final subsection that $\epsilon$ is surjective.

Recall that $\phi_Y\in\Gamma(Y,\del Y)$ denotes the mapping class represented by
$\Phi|_Y$.  We consider the centraliser $Z(\phi_Y)\subset\Gamma(Y)$ of $\phi_Y$
in the extended mapping class group $\Gamma(Y)$ using the natural inclusion
$\Gamma(Y,\del Y)\subset\Gamma(Y)$.  Note that $\Gamma(Y)$ admits a natural map
to $\frS_h\cong\frS_{\smash{\pi_0(\del Y)}}$ given by the action of mapping
classes on boundary components. This map restricts to a map
$Z(\phi_Y)\to\frS_h$.

Similarly, we can consider the extended mapping class group $\Gamma^{\frS_h}(W)$
that contains mapping classes of $W$ that fix $\del^{\text{out}}W=\del\caS$
pointwise but may permute the $h$ incoming boundary components of $W$: here we
identify
\[\frS_h\cong\frS_{\pi_0(\del^{\text{in}} W)}\subset\frS_{\pi_0(\del^{\text{out}} W)}\times \frS_{\pi_0(\del^{\text{in}} W)}.\]

\begin{defi}
  \label{defi:tZdelphi}
  We define $\tZ(\phi)$ as the fibre product
  \[\tZ(\phi)\coloneqq  \Gamma^{\frS_h}(W) \times^{\frS_h} Z(\phi_Y).\]
  Gluing $Y$ and $W$ along $\del Y=\del^{\text{in}} W$ yields a map of groups
  \[\hat\epsilon\colon \Gamma^{\frS_h}(W) \times^{\frS_h} \Gamma(Y)
    \to\Gamma(\caS,\del\caS).\]
  Explicitly, for a couple of mapping classes $(\psi_W,\psi_Y)$, we choose
  representatives $\Psi_W\colon W\to W$ and $\Psi_Y\colon Y\to Y$. The fact that
  $\psi_W$ and $\psi_Y$ project to the same permutation of
  $\pi_0(\del^{\text{in}}W)=\pi_0(\del Y)\cong\set{1,\dots,h}$, together with
  the fact that both $\Psi_W$ and $\Psi_Y$ preserve the boundary
  parametrisation, implies that $\Psi_Y|_{\del Y}=\Psi_W|_{\del^{\text{in}}W}$,
  and hence we can glue the two diffeomorphisms to a diffeomorphism of $\caS$
  (we skip all details about smoothing the output homeomorphism near the gluing
  curves).
\end{defi}

\begin{lem}
  \label{lem:epsilon}
  The restriction $\epsilon=\hat\epsilon|_{\tZ(\phi)}$ has image inside
  $Z(\phi,\Gamma(\caS,\del\caS))\subset\Gamma(\caS,\del\caS)$.
\end{lem}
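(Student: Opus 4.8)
The plan is to unwind the definitions and exhibit an explicit isotopy, stationary on the white region $W$, witnessing that the glued element commutes with a well-chosen representative of $\phi$.

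Fix $(\psi_W,\psi_Y)\in\tZ(\phi)$ and set $\psi\coloneqq\epsilon(\psi_W,\psi_Y)$, so $\psi\in\Gamma(\caS,\del\caS)$ by construction. Choose representatives $\Psi_W\colon W\to W$ and $\Psi_Y\colon Y\to Y$ preserving the boundary parametrisation and inducing one and the same permutation $\sigma\in\frS_h$ of $\pi_0(\del^{\text{in}}W)=\pi_0(\del Y)$, and glue them to a diffeomorphism $\Psi$ of $\caS$ representing $\psi$. For $\phi$ I would use the representative $\Phi\colon\caS\to\caS$ fixing $W$ pointwise: it is the identity on $W$, it maps $Y$ to $Y$, and it restricts on $Y$ to a representative of $\phi_Y$. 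In particular both $\Phi$ and $\Psi$ respect the decomposition $\caS=W\cup Y$, hence so do the composites $\Phi\circ\Psi$ and $\Psi\circ\Phi$, and it suffices to compare their restrictions to $W$ and to $Y$ separately.

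On $W$ one has $(\Phi\circ\Psi)|_W=\Id_W\circ\Psi_W=\Psi_W=\Psi_W\circ\Id_W=(\Psi\circ\Phi)|_W$, literally. On $Y$ one gets $(\Phi\circ\Psi)|_Y=\Phi|_Y\circ\Psi_Y$ and $(\Psi\circ\Phi)|_Y=\Psi_Y\circ\Phi|_Y$. These two diffeomorphisms of $Y$ agree on $\del Y$, because $\Phi|_Y$ is the identity there while $\Psi_Y$ acts on $\del Y$ by the permutation $\sigma$ in the prescribed way; and they represent the same element of the extended mapping class group $\Gamma(Y)$, precisely because $\psi_Y\in Z(\phi_Y)$. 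Since an isotopy through diffeomorphisms preserving the boundary parametrisation keeps the (locally constant, hence constant) boundary permutation fixed, and such diffeomorphisms are rigid on $\del Y$ once that permutation is fixed, the isotopy $\Phi|_Y\circ\Psi_Y\simeq\Psi_Y\circ\Phi|_Y$ may be taken relative to $\del Y$. Splicing it with the constant isotopy of $\Psi_W$ on $W$ — and smoothing near the glueing curves as in \tref{Definition}{defi:tZdelphi} — yields an isotopy $\Phi\circ\Psi\simeq\Psi\circ\Phi$ relative to $\del\caS$. Hence $\phi\psi=\psi\phi$ in $\Gamma(\caS,\del\caS)$, so $\psi\in Z(\phi,\Gamma(\caS,\del\caS))$ as claimed, which proves $\epsilon(\tZ(\phi))\subseteq Z(\phi,\Gamma(\caS,\del\caS))$.

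The only genuinely delicate point I anticipate is the step on $Y$: upgrading an equality of mapping classes in the \emph{extended} group $\Gamma(Y)$ — where boundary components may be permuted — to an isotopy \emph{relative to $\del Y$}. This hinges on the boundary permutation being locally constant in families together with the fact that the two maps being compared already coincide on $\del Y$; everything else is bookkeeping, modulo the smoothing near the cut curves that is in any case tacitly performed already in the definition of $\hat\epsilon$.
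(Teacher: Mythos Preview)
Your argument is correct, and the underlying idea---commutation is automatic on $W$ since $\Phi|_W=\Id_W$, and holds on $Y$ by the centraliser hypothesis---is exactly the one the paper uses. The paper, however, packages it more economically: rather than choosing representatives and splicing isotopies, it simply observes that $(\one_W,\phi_Y)$ is a \emph{central} element of $\tZ(\phi)$ (by precisely your two observations on $W$ and $Y$) and that $\hat\epsilon(\one_W,\phi_Y)=\phi$; since $\hat\epsilon$ is already declared a group homomorphism in \tref{Definition}{defi:tZdelphi}, it follows at once that every $\epsilon(\psi_W,\psi_Y)$ commutes with $\phi$. This sidesteps entirely the ``delicate point'' you flag about upgrading the isotopy on $Y$ to one rel $\del Y$---that work is absorbed into the well-definedness of $\hat\epsilon$ as a homomorphism, which you are entitled to use.
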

\begin{proof}
  Note that $(\one\hspace*{-.8px}_W,\phi_Y)$ is a central element of $\tZ(\phi)$:
  indeed, given a pair $(\psi_W,\psi_Y)\in \tZ(\phi)$, we have that
  $\psi_Y\in Z(\phi_Y)$, so $\psi_Y$ commutes with $\phi_Y$, and clearly
  $\one_W$ commutes with $\psi_W$ in  $\Gamma^{\frS_h}(W)$.
  Applying $\epsilon$, we obtain that $\epsilon(\psi_W,\psi_Y)$ commutes with
  $\epsilon(\one_W,\phi_Y)=\phi$.
\end{proof}

In the following lemma, we decompose $Z(\phi_Y)$, which is the second factor
appearing in the formula for $\tZ(\phi)$.

\begin{lem}
  \label{lem:ZphiYdecomposition} 
  There is an isomorphism of groups
  \[Z(\phi_Y)\cong\prod_{i=1}^r Z(\bar\phi_i)\wr \fS_{s_i},\]
  where $Z(\bar\phi_i)\subset\Gamma_{g_i,(n_i)}$ is the centraliser in the
  extended mapping class group, and where $\smash{Z(\bar\phi_i)\wr
    \fS_{s_i}=\big(Z(\bar\phi_i)\big)^{s_i}\rtimes\fS_{s_i}}$ denotes the wreath
  product.
\end{lem}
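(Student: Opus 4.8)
The plan is to decompose $Z(\phi_Y)$ by first separating the components of $Y$ according to their similarity classes, and then, within each similarity class, recognising the centraliser as a wreath product. First I would observe that since $\phi_Y\in\Gamma(Y,\partial Y)\subseteq\Gamma(Y)$ is represented by a diffeomorphism that preserves each connected component $Y_{i,j}$ of $Y$, the extended mapping class group $\Gamma(Y)$ admits a natural description in terms of the combinatorics of the components: an element of $\Gamma(Y)$ is a diffeomorphism of $Y$ up to isotopy, and such a diffeomorphism is determined by a permutation $\rho$ of the set $\{(i,j)\}$ of components together with, for each component, a diffeomorphism $Y_{i,j}\to Y_{\rho(i,j)}$ respecting boundary parametrisations. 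So $\Gamma(Y)$ is built out of the groupoid of components and diffeomorphisms between them; concretely, if we let $\mathrm{Mod}(Y_{i,j},Y_{i',j'})$ denote the set of boundary-parametrisation-preserving isotopy classes of diffeomorphisms between two components (empty unless they are abstractly diffeomorphic), then $\Gamma(Y)$ is the automorphism group of the object $Y$ in the corresponding groupoid.

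Next I would impose the condition of commuting with $\phi_Y$. An element $\psi_Y\in\Gamma(Y)$ covering a permutation $\rho$ of components lies in $Z(\phi_Y)$ exactly when, for each component $Y_{i,j}$, the chosen diffeomorphism $\Xi\colon Y_{i,j}\to Y_{\rho(i,j)}$ intertwines $\phi_{i,j}$ with $\phi_{\rho(i,j)}$, i.e.\ $\phi_{i,j}=(\phi_{\rho(i,j)})^{\Xi}$. By the very definition of similarity (and \tref{Notation}{nota:Yi}), such an intertwiner exists if and only if $Y_{i,j}$ and $Y_{\rho(i,j)}$ are similar, i.e.\ $\rho$ preserves the partition into similarity classes; hence $\rho$ lies in $\prod_{i=1}^r\fS_{s_i}$, acting on the $j$-index within each class $i$. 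This already gives a short exact sequence $1\to K\to Z(\phi_Y)\to\prod_i\fS_{s_i}\to 1$, where $K$ is the subgroup covering the trivial permutation, namely $K=\prod_{i,j} Z(\phi_{i,j},\Gamma(Y_{i,j}))$. Transporting along the fixed diffeomorphisms $\Xi_{i,j}\colon Y_{i,j}\to\Sigma_{g_i,n_i}$ (which by construction send $\phi_{i,j}$ to $\bar\phi_i$), each factor $Z(\phi_{i,j},\Gamma(Y_{i,j}))$ is identified with $Z(\bar\phi_i)\subseteq\Gamma_{g_i,(n_i)}$. So $K\cong\prod_{i=1}^r Z(\bar\phi_i)^{s_i}$.

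Finally I would show the extension splits compatibly with the wreath-product structure. The splitting comes from the fixed choices: given a permutation $\sigma\in\fS_{s_i}$ of the $j$-index within similarity class $i$, use $\Xi_{i,\sigma(j)}^{-1}\circ\Xi_{i,j}\colon Y_{i,j}\to Y_{i,\sigma(j)}$ as the intertwiner on each component; since all $\bar\phi_{i,j}$ were normalised to equal the same $\bar\phi_i$, this is indeed a map of pairs compatible with $\phi_Y$, and the assignment $\sigma\mapsto$ (this diffeomorphism) is a group homomorphism $\fS_{s_i}\to Z(\phi_Y)$. Assembling these over $i$ gives a section of $Z(\phi_Y)\to\prod_i\fS_{s_i}$ whose image normalises $K$ by permuting the factors in the evident way, so $Z(\phi_Y)\cong\prod_{i=1}^r\big(Z(\bar\phi_i)^{s_i}\rtimes\fS_{s_i}\big)=\prod_{i=1}^r Z(\bar\phi_i)\wr\fS_{s_i}$. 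The main obstacle I expect is purely bookkeeping: being careful that the "commutes with $\phi_Y$" condition forces the component-permutation to respect similarity \emph{on the nose} (no subtlety with boundary orderings versus parametrisations, since $\Gamma(Y)$ is the \emph{fully} extended group allowing all permutations of $\pi_0(\partial Y)$), and checking that the section is well-defined independently of isotopy representatives; both reduce to the naturality of conjugation recorded in \tref{Remark}{rem:Xiconj} together with the normalisation $\bar\phi_{i,j}=\bar\phi_i$ from \tref{Notation}{nota:Yi}.
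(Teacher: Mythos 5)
Your proposal is correct and follows essentially the same route as the paper: first showing that a centralising class must permute components within similarity classes (giving the decomposition into $\prod_i\fS_{s_i}$-orbits), then identifying the kernel via the fixed $\Xi_{i,j}$ with $\prod_i Z(\bar\phi_i)^{s_i}$, and finally recognising the wreath-product structure. The paper is slightly terser --- it first factors $Z(\phi_Y)=\prod_i Z(\phi|_{Y_i})$, identifies $\Gamma(Y_i)\cong\Gamma_{g_i,(n_i)}\wr\frS_{s_i}$, notes $\phi|_{Y_i}$ corresponds to a diagonal element, and invokes the standard description of centralisers of diagonals in wreath products --- whereas you make the splitting explicit via the section $\sigma\mapsto\Xi_{i,\sigma(j)}^{-1}\circ\Xi_{i,j}$; both are the same argument.
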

\begin{proof}
  Let $\psi_Y\in Z(\phi_Y)$ be a centralising mapping class, and represent
  $\psi_Y$ by a diffeomorphism $\Psi_Y$ preserving the boundary
  parametrisation. Furthermore, let $P,P'\subset Y$ be two connected components
  with $\Psi_Y(P)=P'$; then restricting the commutativity of $\psi_Y$ and
  $\phi_Y$ to these two components, we obtain the equality
  $\smash{\smash{\phi_Y|}_{P}^{\smash{\Psi_Y}}=\phi_Y|_{P'}}$ in
  $\Gamma(P',\del P')$. This implies that $P$ and $P'$ are similar, and using
  \iref{Notation}{nota:Yi} we have that each $Y_i$ is $\Psi_Y$-invariant and
  therefore
  \[Z(\phi_Y)=\prod_{i=1}^r Z(\phi|_{Y_i}),\]
  where $\phi|_{Y_i}$ is defined using that $Y_i$ is a $\phi$-invariant union of
  connected components of $Y$.  Now fix $1\le i\le r$; using the diffeomorphisms
  $\Xi_{i,j}$ for varying $1\le j\le s_i$ we can identify the surface $Y_i$ with
  $\coprod_{1\le j\le s_i} \Sigma_{g_i,n_i}$ and thus identify $\Gamma(Y_i)$
  with $\Gamma_{g_i,(n_i)}\wr \frS_{s_i}$. Thus, $\phi|_{Y_i}$ corresponds to
  the element
  \[(\bar\phi_i,\dots,\bar\phi_i)\in (\Gamma_{g_i,(n_i)})^{s_i}
    \subset\Gamma_{g_i,(n_i)}\wr \frS_{s_i}.\]
  It follows that $Z(\phi|_{Y_i})$ is isomorphic to $Z(\bar\phi_i)\wr \fS_{s_i}$.
\end{proof}

We conclude the subsection by analysing the actual subgroup of $\frS_h$ over
which the fibre product $\tZ(\phi)$ lives. Now we will focus on the case in
which $W$ is connected, because the exposition is a bit easier: indeed, the
natural map $\Gamma^{\frS_h}(W)\to\frS_h$ is surjective under this hypothesis on
$W$, so we just have to describe the image of the map $Z(\phi_Y)\to\frS_h$.

\begin{nota}
  \label{nota:Hi}
  We denote by $\frH_i\subset \frS_{n_i}$ the image of $Z(\bar\phi_i)$ under the
  natural map $\Gamma_{g_i,(n_i)}\to\frS_{n_i}$.
\end{nota}
The proof of \iref{Lemma}{lem:ZphiYdecomposition} shows that the image of
$Z(\phi_Y)\to\frS_h\cong\frS_{\pi_0(\del Y)}$ is the subgroup
$\prod_i\frH_i\wr \frS_{s_i}$ consisting of those permutations of the set
$\pi_0(\del Y)$ that preserve each subset $\pi_0(\del Y_i)$ for each
$1\le i\le r$, and send each $\pi_0(\del Y_{i,j})$ to some subset
$\pi_0(\del Y_{i,j'})$ in a way that, under the identifications
\mbox{$\pi_0(\del Y_{i,j})\cong\{1,\dots,n_i\}\cong \pi_0(\del Y_{i,j'})$}, gives a
permutation in $\frS_{n_i}$ which can also be attained by projecting an element
$\bar\psi_i\in Z(\bar\phi_i)$.

\subsection{The kernel of \texorpdfstring{$\epsilon$}{ε}}
\label{subsec:kerneleta}
Recall the gluing homomorphism
$\hat\epsilon\colon
\Gamma^{\frS_h}(W)\times^{\frS_h}\Gamma(Y)\to\Gamma(\caS,\del\caS)$ from the
previous subsection. We proceed by identifying the kernel of
$\hat\epsilon$. Note that $\hat\epsilon$ has its image in the subgroup
$\smash{\Gamma(\caS,\del\caS)_{[c_1,\dots,c_h]}}$ of $\Gamma(\caS,\del\caS)$
containing all mapping classes $\psi$ that preserve the cut locus
$[c_1,\dots,c_h]$ of $\phi$, i.e.\ send each oriented homotopy class of a curve
$c_i$ to the oriented homotopy class of some (possibly different) curve $c_j$.
If $(\psi_W,\psi_Y)\in\Gamma^{\frS_h}(W)\times^{\frS_h}\Gamma(Y)$ belongs to the
kernel of $\hat\epsilon$, then in particular $\hat\epsilon(\psi_W,\psi_Y)$ acts
trivially on the components of the cut locus. It follows that both $\psi_W$ and
$\psi_Y$ project to the identity element in $\frS_h$, i.e.\ $(\psi_W,\psi_Y)$ is
in fact contained in the subgroup $\Gamma(W,\del W)\times \Gamma(Y,\del Y)$ of
$\Gamma^{\frS_h}(W)\times^{\frS_h}\Gamma(Y)$. Hence the kernel of $\hat\epsilon$
coincides with the kernel of the restriction of $\hat\epsilon$ to
$\Gamma(W,\del W)\times\Gamma(Y,\del Y)$.

We can now use \cite[Thm.\,3.18]{FarbMargalit}, in the version for disconnected
surfaces: since no component of $W$ or $Y$ is a disc, the kernel of
\[\hat\epsilon\colon\Gamma(W,\del W)\times\Gamma(Y,\del Y)\to\Gamma(\caS,\del\caS)\]
is generated by the couples $(D_{c_i},D_{c_i}^{-1})$, where $D_{c_i}$ denotes
the Dehn twist about the curve $c_i$.
Since each component of $W$ has at least one outgoing boundary, whereas the
curves $c_i$ are incoming for $W$, we can apply \cite[Lem.\,3.17]{FarbMargalit}
to the first coordinates of the elements $(D_{c_i},D_{c_i}^{-1})$ and conclude
that they generate a subgroup of $\Gamma(W,\del W)\times\Gamma(Y,\del Y)$
isomorphic to $\Z^h$.

Finally, we note that the elements $(D_{c_i},D_{c_i}^{-1})$ belong to the
subgroup $\tZ(\phi)$, as $D_{c_i}^{-1}\in\Gamma(Y,\del Y)$, being the inverse of
a boundary twist, is a central element and in particular it commutes with
$\phi_Y$. It follows that the kernel of $\epsilon$ is the free abelian group of
rank $h$ generated by the elements $(D_{c_i},D_{c_i}^{-1})$.

\subsection{Surjectivity of \texorpdfstring{$\epsilon$}{ε}}
We now prove that the map $\epsilon\colon\tZ(\phi)\to Z(\phi,\Gamma(\caS,\del\caS))$
is surjective. In order to do so, let $\psi\in Z(\phi,\Gamma(\caS,\del\caS))
\subset\Gamma(\caS,\del\caS)$ be a centralising mapping class;
see \iref{Definition}{def:centraliser}. Then, by \iref{Lemma}{lem:cutlocusequiv},
$\psi$ preserves the cut locus of $\phi$.

We can fix a representative $\Psi\colon\caS\to\caS$ of $\psi$ that permutes the
curves $c_1,\dots,c_h$ preserving their parametrisation.  In particular $\Psi$
restricts to a diffeomorphism of $W$ and of $Y$, respectively.  Moreover $\Psi$
fixes pointwise $\del\caS=\del^{\text{out}}W$, and both $\Psi|_W$ and $\Psi|_Y$
are diffeomorphisms preserving the boundary parametrisation of $W$ and $Y$,
respectively.  Consider now the mapping class $\phi_Y\in\Gamma(Y,\del Y)$
represented by $\Phi|_Y$, and note that also
$(\Psi|_Y)\circ (\Phi|_Y)\circ (\Psi^{-1}|_Y)$ represents a mapping class in
$\Gamma(Y,\del Y)$, which we denote by $\smash{\phi_Y^{\smash{\Psi|_Y}}}$.

We claim that $\phi_Y=\phi_Y^{\smash{\Psi|_Y}}$ holds in $\Gamma(Y,\del Y)$. To
see this, note that gluing with the identity in $\Gamma(W,\del W)$ gives a map
$\lambda_Y^{\caS}\colon\Gamma(Y,\del Y)\to\Gamma(\caS,\del\caS)$, which is
injective by \cite[Thm.\,3.18]{FarbMargalit}; the claim follows from the
observation that $\lambda_Y^{\caS}(\phi_Y^{\smash{\Psi|_Y}})=\phi^\Psi$, which
by the hypothesis on $\Psi$ is equal to $\phi=\lambda_Y^{\caS}(\phi_Y)$.

It follows that $\Psi|_Y$ represents a mapping class $\psi_Y\in\Gamma(Y)$ that
belongs to $Z(\phi_Y)$. Similarly, $\Psi|_W$ represents a class in
$\Gamma^{\frS_h}(W)$, and it is clear that $\psi_W$ and $\psi_Y$ project to the
same element of $\frS_h$, i.e. the couple $(\psi_W,\psi_Y)$ gives an element of
$\tZ(\phi)$. It is also evident that $\epsilon(\psi_W,\psi_Y)=\psi$. This
implies that $\epsilon$ is surjective.  Putting together the discussion of this
and the previous subsections, we conclude the following.

\begin{prop}
  \label{prop:Zdelphistructure}
  Let $g\ge0, n\ge1$, let $\caS$ be a surface of type $\Sigma_{g,n}$, let
  $\phi\in\Gamma(\caS,\del\caS)$, let $c_1,\dots,c_h$ be a system of oriented
  curves representing the cut locus of $\phi$, and let $W$ and $Y$ be the
  corresponding white and yellow regions of $\caS$, using \iref{Notation}{nota:Yi}.
  Then there is an isomorphism of groups induced by $\epsilon$
  \[\tZ(\phi)/\Z^h=\quot{\pa{\Gamma^{\frS_h}(W)\times^{\frS_h}
        \prod_{i} Z(\bar\phi_i)\wr \fS_{s_i}}}{\Z^h}
    ~\overset{\cong}{\longrightarrow}~ Z(\phi,\Gamma(\caS,\del\caS)),\]
  where $\Z^h$ is the free abelian group generated by the elements
  $(D_{c_i},D_{c_i}^{-1})$.  If, moreover, $W$ is connected, we can use
  \iref{Notation}{nota:Hi} and rewrite the isomorphism as
  \[\quot{\pa{\Gamma^{\prod_{i}\frH_i\wr \fS_{s_i}}(W)\times^{\prod_{i}\frH_i\wr \fS_{s_i}}
        \prod_{i} Z(\bar\phi_i)\wr \fS_{s_i}}}{\Z^h}
    ~\overset{\cong}{\longrightarrow}~ Z(\phi,\Gamma(\caS,\del\caS)).\]
\end{prop}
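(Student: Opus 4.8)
The plan is to recognise that this proposition merely assembles the four preceding subsections via the first isomorphism theorem. First I would recall that $\epsilon=\hat\epsilon|_{\tZ(\phi)}\colon\tZ(\phi)\to\Gamma(\caS,\del\caS)$ is a group homomorphism whose image lies in $Z(\phi,\Gamma(\caS,\del\caS))$, by the lemma following \tref{Definition}{defi:tZdelphi}. The subsection on surjectivity of $\epsilon$ shows that $\epsilon$ is onto $Z(\phi,\Gamma(\caS,\del\caS))$, while \tref{Subsection}{subsec:kerneleta} identifies $\ker\epsilon$ with the free abelian subgroup $\Z^h\subset\tZ(\phi)$ generated by the pairs $(D_{c_i},D_{c_i}^{-1})$. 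Hence $\epsilon$ descends to an isomorphism $\tZ(\phi)/\Z^h\overset{\cong}{\longrightarrow}Z(\phi,\Gamma(\caS,\del\caS))$. Substituting the description $Z(\phi_Y)\cong\prod_i Z(\bar\phi_i)\wr\fS_{s_i}$ from \tref{Lemma}{lem:ZphiYdecomposition} into the defining fibre product $\tZ(\phi)=\Gamma^{\frS_h}(W)\times^{\frS_h}Z(\phi_Y)$ then yields the first displayed formula.

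For the second formula I would add the hypothesis that $W$ is connected. As recorded in the paragraph preceding \tref{Subsection}{subsec:kerneleta}, under this hypothesis the boundary-permutation map $\Gamma^{\frS_h}(W)\to\frS_h$ is surjective, and the image of $Z(\phi_Y)\to\frS_h$ is the subgroup $\frK\coloneqq\prod_i\frH_i\wr\fS_{s_i}$ of $\frS_h$. I would then invoke the elementary fact that, for group homomorphisms $A\to G\leftarrow B$ whose second map has image contained in a subgroup $\frK\le G$, the fibre product $A\times^G B$ coincides with $A_{\frK}\times^{\frK}B$, where $A_{\frK}\le A$ denotes the preimage of $\frK$. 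By \tref{Definition}{defi:extendedmcg} the preimage of $\frK$ under $\Gamma^{\frS_h}(W)\to\frS_h$ is precisely the extended mapping class group $\Gamma^{\frK}(W)=\Gamma^{\prod_i\frH_i\wr\fS_{s_i}}(W)$, so this replaces the factor $\Gamma^{\frS_h}(W)$ together with the base $\frS_h$ of the fibre product by $\Gamma^{\prod_i\frH_i\wr\fS_{s_i}}(W)$ and $\prod_i\frH_i\wr\fS_{s_i}$ respectively, giving the second displayed formula.

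There is no genuine obstacle here: every substantive ingredient — surjectivity of $\epsilon$, the computation $\ker\epsilon\cong\Z^h$, the wreath-product decomposition of $Z(\phi_Y)$ from \tref{Lemma}{lem:ZphiYdecomposition}, and the identification of the relevant images in $\frS_h$ — has already been established in the preceding subsections, and the proposition only records their combination. The one point deserving a line of justification is the fibre-product identity $A\times^G B=A_{\frK}\times^{\frK}B$ together with the observation that the preimage of $\frK$ under $\Gamma^{\frS_h}(W)\to\frS_h$ equals $\Gamma^{\frK}(W)$; both follow immediately from the definitions, so no new geometric input is needed.
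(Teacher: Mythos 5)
Your proof is correct and follows the same route as the paper: the paper simply states that the proposition follows by ``putting together the discussion of this and the previous subsections,'' and your argument makes exactly that assembly explicit — image and surjectivity of $\epsilon$, identification of $\ker\epsilon\cong\Z^h$, the first isomorphism theorem, and substitution of the decomposition $Z(\phi_Y)\cong\prod_i Z(\bar\phi_i)\wr\fS_{s_i}$ from \tref{Lemma}{lem:ZphiYdecomposition}. Your explicit justification of the second formula via the fibre-product reduction $A\times^G B=A_{\frK}\times^{\frK}B$ and the identification of the preimage with $\Gamma^{\frK}(W)$ is a small, correct refinement of what the paper leaves implicit in the paragraph preceding \tref{Subsection}{subsec:kerneleta}.
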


\section{Generalities on coloured operads}
\label{sec:colouredOHS}
In this section, we establish the operadic framework that we will use in the
rest of the article. The reader who is well-acquainted with the language of
coloured operads may skip this interlude and go directly to
\iref{Section}{sec:infiniteLSFromOHS}.

\subsection{Notation and diagram categories}

By ‘space’, we mean a topological space that is compactly generated and has the
weak Hausdorff property. Let $\Top$ be the category of spaces; it is Cartesian
closed, complete, and cocomplete.

For a topologically enriched category $\bfI$ and objects $k,n\in\ob(\bfI)$,
we denote by $\bfI\binom{k}{n}$ the space of morphisms from $k$ to $n$, and we
denote the identity of $n$ by $\bEn_n\in \bfI\binom{n}{n}$.

\begin{nota}
  Let $\Inj$ be the small category with objects $\ul{r}=\set{1,\dots,r}$ for all
  non-negative integers $r\in \{0,1,2,\dotsc\}$, and with morphisms
  $\ul{r}\to \ul{r}'$ being all injective maps of finite sets. Moreover, let
  $\bSig\subseteq\Inj$ be the subcategory of all bijective maps.
  
  The category $\Inj$ is spanned by two sorts of maps: on the one hand,
  \emph{permutations} $\tau\in\frS_r$, which constitute the category $\bSig$,
  and on the other hand, the \emph{top cofaces}
  $d^r\colon \ul{\smash{r-1}}\to \ul{r}$, where for each $1\le i\le r$, we
  denote by $d^i$ the unique strictly monotone function whose image does not
  contain the element $i\in\ul{r}$. Whenever we apply a contravariant functor to
  $\Inj$, we write $d_i\coloneqq (d^i)^*$.
\end{nota}

\begin{nota}\label{nota:tuple}
  Let $N$ be a fixed set, $r\ge 0$, and let $K=(k_1,\dotsc,k_r)$ be a tuple of
  elements of $N$.  We write $\# K\coloneqq r$ for the \emph{length} of $K$.  If
  $u\colon \ul{s}\hookrightarrow \ul{r}$ is a map in $\mathbf{Inj}$, we write
  $u^*K \coloneqq \big(k_{u(1)},\dotsc,k_{u(s)}\big)$.  If
  $\bm{X}\coloneqq (X_n)_{n\in N}$ is a family of objects in some category with
  finite products, we write
  $\bm{X}(K) \coloneqq X_{k_1} \times \dotsb\times X_{k_r}$;

  For two tuples $K=(k_1,\dotsc,k_r)$ and $K'=(k'_1,\dotsc,k'_{r'})$ of elements
  of $N$, we denote by
  $\smash{\Inj \binom{K\,}{K'}\subset\Inj\binom{\ul{r}\,}{\ul{r}'}}$ the subset of those
  morphisms $u\colon \ul{r}\hookrightarrow \ul{r}'$ satisfying $u^*K'=K$, and we
  denote by $\bSig\binom{K\,}{K'}$ the intersection
  $\Inj\binom{K\,}{K'}\cap\bSig\binom{\ul{r}\,}{\ul{r}'}$.
\end{nota}

\begin{defi}\label{defi:equigroth}
  Let $N$ be a set and $\bm{G}\coloneqq (G_n)_{n\in N}$ be a family
  of topological groups. We define the \emph{wreath product} $\bm{G}\wr \Inj$ as
  the following topologically enriched category:\looseness-1
  \begin{enumerate}
  \item the objects of $\bm{G}\wr\Inj$ are all tuples $K=(k_1,\dotsc,k_r)$ with
    $r\ge 0$ and $k_i\in N$;
  \item for two tuples $K$ and $K'$, we define
    $(\bm{G}\wr\Inj)\smash{\binom{K}{K'}} = \bm{G}(K)\times
    \Inj\smash{\binom{K\,}{K'}}$, i.e.\ a morphism $K\to K'$ in $\bm{G}\wr\Inj$
    is a pair $(\bm{\gamma},u)$ consisting of a tuple $\bm{\gamma}\in\bmG(K)$,
    and of an injective map $u\colon \ul{r}\hookrightarrow \ul{r}'$ satisfying
    $K=u^*K'$;\vspace*{-2px}
  \item we let $(\bm{\gamma}',u')\circ (\bm{\gamma},u) \coloneqq (u^*\bm{\gamma}'\cdot \bm{\gamma},u'\circ u)$,
    where $u^*\bm{\gamma}' =(\gamma'_{u(1)},\dotsc,\gamma'_{u(r)})$, and
    ‘$\cdot$’ denotes component-wise multiplication.
  \end{enumerate}
  For each tuple $K$, we define $\bm{G}[K]\subseteq \bm{G}\wr \Inj$ as the full
  subcategory spanned by objects of the form $\tau^*K$ for $\tau\in\frS_r$.
  Moreover we let $\bm{G}\wr\bSig$ be the subgroupoid with morphism spaces given
  by $\bmG(K)\times \bSig\binom{K\,}{K'}$.

  If $(G_n)_{n\in N}$ is the trivial sequence $G_n=1$ of groups, then we write
  $N\wr \Inj$ for the wreath product, and we also write $N\wr\bSig$ and $N[K]$
  for the corresponding subgroupoids, respectively.
\end{defi}

\begin{expl}\label{ex:easywreath}
  If $\bm{X}=(X_n)_{n\in N}$ is a sequence of spaces, then we obtain a functor
  \[\bm{X}\colon N\wr\bSig\to \mathbf{Top},
    \quad K\mapsto \bm{X}(K)=X_{k_1}\times\dotsb\times X_{k_r}.\]
\end{expl}

\begin{constr}
  Let $\bm{X}\coloneqq (X_n)_{n\in N}$ be a family of based spaces, together
  with based left actions of $G_n$ on $X_n$ for each $n\in N$. Then the functor
  from \iref{Example}{ex:easywreath} can be extended to a functor
  $\bm{G}\wr\Inj\to \Top$ as follows. For each injective map
  $u\colon \ul{r}\hookrightarrow \ul{r}'$, each fibre has at most one element;
  therefore we obtain an extension $N\wr\Inj\to \Top$ by
  \[u_*(x_1,\dotsc,x_r)\coloneqq (x_{\smash{u^{-1}(1)}},\dotsc,x_{\smash{u^{-1}(r')}}),\]
  where we define $x_\varnothing$ to be the basepoint.
  Moreover $\bm{G}(K)$ acts on $\bm{X}(K)$ component-wise, so for a morphism
  $(\bm{\gamma},u)$ in $\bm{G}\wr\Inj$ we can define
  $(\bm{\gamma},u)_*(x) \coloneqq u_*(\bm{\gamma}\cdot x)$.
\end{constr}

\begin{defi}
  Let $\bfI$ be a small and topologically enriched category and, moreover, let
  \mbox{$H\colon \bfI^\op\times\bfI\to {\large\Top}$} be a functor. Then we define the
  \emph{coend} to be the coequaliser
  \[\hspace*{-2px}\int^{k\in\bfI}\hspace*{-3px}H(k,k) \coloneqq \on{coeq}
    \bigg(\hspace*{-3px}
    \begin{tikzcd}[column sep=7.4em]
      \displaystyle\coprod_{k,n}\bfI\binom{k}{n}\times H(n,k)
      \ar[shift left=1]{r}{(f,x)\mapsto H(f,\bEn_{k})(x)}
      \ar[shift right=1]{r}[swap]{(f,x)\mapsto H(\bEn_{n},f)(x)} &
      \displaystyle\coprod_kH(k,k)
    \end{tikzcd}\hspace*{-6px}\bigg)\hspace*{-1px}.\]
\end{defi}

\subsection{Coloured operads }

We assume that the reader is familiar with the classical notion of an operad, as
it is for example presented in \cite{MSS}, in particular, the visualisation of
operations by trees is taken for granted.

We will give a brief introduction to the notion of a \emph{coloured} operad
mostly for the purpose of fixing the notation we will use later. For a detailed
introduction to coloured operads, we refer the reader to \cite{Yau}.

\begin{defi}\label{defi:Opd}
  Let $N$ be a fixed set. An \emph{$N$-coloured operad} is a family of functors
  $\scO\binom{-}{n}\colon (N\wr\mathbf{\Sigma})^\op\to \mathbf{Top}$ for each
  $n\in N$, together with:
  \begin{enumerate}
  \item choices of identities $\bEn_n\in \scO\binom{n}{n}$;\vspace*{-3px}
  \item composition maps for each $n,k_i,l_{ij}\in N$, which are of the form
    \[\scO\binom{k_1,\dotsc,k_r}{n}\times\prod_{i=1}^r
      \scO\binom{l_{i1},\dotsc,l_{is_i}}{k_i}\to
      \scO\binom{l_{11},\dotsc,l_{rs_r}}{n},\quad\!
      (\mu;\mu'_1,\dotsc,\mu'_r)\mapsto \mu\circ (\mu'_1,\dotsc,\mu'_r);\]
  \end{enumerate}
  such that the usual coherence axioms \cite[§\,11.2]{Yau} are satisfied.
  For \mbox{$\mu\in \scO\binom{k_1,\dotsc,k_r}{n}$}, we call $n$ the
  \emph{output}, $(k_1,\dotsc,k_r)$ the \emph{input profile}, and
  $\# \mu \coloneqq r$ the \emph{arity} of $\mu$. For the first few values of
  $r$, we call $\mu$ \emph{nullary}, \emph{unary}, or \emph{binary} if $\mu$ has
  arity $0$, $1$, or $2$, respectively. For the empty tuple, we write
  $\scO\binom{}{n}$ for the space of nullaries.

  We say that $\scO$ is \emph{$\frS$-free} if for each $K=(k_1,\dotsc,k_r)$, the
  subgroup $\frS_K\subseteq\frS_r$ which fixes the tuple $K$ acts freely on
  $\scO\binom{K}{n}$.

  We call $\scO$ \emph{monochromatic} if $N=*$ is just a singleton. In this
  case, we also write $\scO(r)\coloneqq \scO\binom{*,\dotsc,*}{*}$ for the space
  of $r$-ary operations. For an $N$-coloured operad $\scO$ and a fixed colour
  $n\in N$, we also consider the monochromatic operad $\scO|_n$ with operation
  spaces $(\scO|_n)(r) \coloneqq \scO\binom{n,\dotsc,n}{n}$.

  Additionally, we use the short notation for ‘partial’ composition: for
  \mbox{$\mu\in \scO\binom{k_1,\dotsc,k_r}{n}$} and
  $\mu'\in\scO\binom{l_1,\dotsc,l_s}{k_i}$, we write
  \[\mu\circ_i \mu' \coloneqq \mu\circ
    (\bEn_{k_1},\dotsc,\bEn_{k_{i-1}},\mu',\bEn_{k_{i+1}},\dotsc,\bEn_{k_r})\in
    \scO\binom{k_1,\dotsc,k_{i-1},l_1,\dotsc,l_s,k_{i+1},\dotsc,k_r}{n}.\]
\end{defi}

\begin{expl}
  \begin{enumerate}
  \item The little $d$-discs operads $\scD_d$ for $1\le d\le\infty$ are examples
    of monochromatic operads. In our setting, $\scD_d(0)=\{\frv\}$, the
    single nullary operation being the empty configuration of discs (thus,
    $\frv$ stands for ‘void’).\looseness-1
  \item Each small topologically enriched category $\bfI$ is a coloured operad
    with colour set $\ob(\bfI)$ and only unaries.
  \end{enumerate}
\end{expl}

\begin{defi}
  Let $\scO$ be an $N$-coloured operad. An \emph{$\scO$-algebra} is an $N$-indexed family
  $\bm{X}\coloneqq (X_n)_{n\in N}$ of spaces, together with maps
  \[\scO\binom{K}{n}\times \bm{X}(K)\to X_n,\quad
    (\mu;x_1,\dotsc,x_r)\mapsto \mu(x_1,\dotsc,x_r)\]
  such that the usual coherence axioms from \cite[§\,13]{Yau} are satisfied.

  A morphism $f\colon \bm{X}\to \bm{X}'$ of $\scO$-algebras is a family
  $(f_n\colon X_n\to X'_n)_{n\in N}$ of maps satisfying
  $f_n(\mu(x_1,\dotsc,x_r)) = \mu(f_{k_1}(x_1),\dotsc,f_{k_r}(x_r))$ for all
  operations $\mu$ and all elements $x_i$. This gives rise to the category
  $\Alg{\scO}$.
\end{defi}

\begin{expl}\label{ex:initialAlg}
  For each $N$-coloured operad $\scO$, we have an $\scO$-algebra by the family
  $(\scO\binom{}{n})_{n\in N}$. This algebra is initial in $\Alg{\scO}$ by
  construction, so we call it the \emph{initial $\scO$-algebra}.  For
  $1\le d\le\infty$, the initial $\scD_d$-algebra is just a single point.
\end{expl}

\begin{defi}\label{defi:baseChange}
  For a fixed colour set $N$, a morphism $\rho\colon \scP\to \scO$ of
  $N$-coloured operads is a family
  $\rho^-_n\colon \scP\binom{-}{n}\Rightarrow \scO\binom{-}{n}$ of
  transformations such that we have, abbreviating $\rho\coloneqq\rho^K_n$ for
  all $K$ and $n$,
  \begin{enumerate}
  \item $\rho(\bEn^{\scP}_n)=\bEn^{\scO}_n$ for each $n\in N$;\vspace*{-3px}
  \item $\rho(\mu)\circ (\rho(\mu'_1),\dotsc,\rho(\mu'_r)) =
    \rho(\mu\circ (\mu'_1,\dotsc,\mu'_r))$.
  \end{enumerate}
  Each operad morphism $\rho\colon \scP\to \scO$ gives rise to a
  \emph{base-change adjunction}
  \[\rho_!\colon \Alg{\scO}\rightleftarrows \Alg{\scP}\lon \rho^*\]
  as follows: each $\scO$-algebra is a $\scP$-algebra by restriction. For the
  converse, we consider the \emph{absolute} adjunction
  \mbox{$F^\scO\colon \Top^N\rightleftarrows \Alg{\scO}\lon U^\scO$,} where
  $U^\scO$ forgets the action, and where for each $N$-family $\bmX$
  of spaces, we put \mbox{$F^\scO(\bmX)_n \coloneqq \smash{\int^{K\in N\wr\mathbf{\Sigma}}
    \scO\binom{K}{n}\times \bm{X}(K)}$}. Then each
  $\scP$-algebra $\bmX$ is the reflexive coequaliser of
  $F^\scP U^\scP F^\scP U^\scP \bmX \rightrightarrows F^\scP U^\scP\bmX$, whence
  the $\scO$-algebra $\rho_!\bmX$ is the reflexive coequaliser of
  \mbox{$F^\scO U^\scP F^\scP U^\scP \bmX \rightrightarrows F^\scO U^\scP\bmX$},
  compare \cite[§\,4]{BergerMoerdijk2}. Intuitively $\rho_!\bmX$ is a quotient
  of the free $\scO$-algebra over $\bmX$ by the existing $\scP$-action on $\bmX$.
    
  This adjunction clearly respects compositions: if $\rho\colon \scQ\to \scP$
  and $\rho'\colon \scP\to \scO$ are two morphisms of $N$-coloured operads, then we
  clearly have $(\rho'\circ\rho)^* = \rho^*\circ \rho'{}^*$, so by the
  uniqueness of left adjoints, we also have $(\rho'\circ\rho)_! \cong
  \rho'_!\circ \rho_!$.
  
  When $\rho$ is clear from the context, we also write
  $F^{\scO}_{\scP}\colon \Alg{\scP}\rightleftarrows \Alg{\scO}\lon U^{\scO}_{\scP}$ for
  the base-change adjunction.
\end{defi}

\subsection{The coloured surface operad}
\label{subsec:scM}
We define an $\N_{\ge 1}$-coloured operad $\scM$ which is a ‘coloured’ version
of Tillmann’s surface operad \cite{Tillmann00}, see
\iref{Figure}{fig:colouredM}.

We first recall Segal’s cobordism category $\bfM$ \cite{cft}, which is a
topologically enriched category: objects of $\bfM$ are non-negative integers
$n\ge 0$; a morphism from $n$ to $n'$ is represented by a (possibly
disconnected) Riemann surface $W$ with $n$ incoming and $n'$ outgoing boundary
components; the surface $W$ is equipped with a choice of collar neighbourhoods
$U^{\text{in}}_{\del W}$ and $U^{\text{in}}_{\del W}$ of $\del^{\text{in}}W$ and
$\del^{\text{out}}W$ respectively; these neighbourhoods are equipped with:
\begin{enumerate}
\item a \emph{holomorphic} parametrisation
  $\tilde\theta^{\text{in}}\colon \set{1,\dots,n}\times S^1\times[0;1)\to
  U^{\text{in}}_{\del W}$; and\vspace*{-3px}
\item an \emph{antiholomorphic} parametrisation
  $\tilde\theta^{\text{out}}\colon \set{1,\dots,n'}\times S^1\times[0;1)\to
  U^{\text{out}}_{\del W}$.
\end{enumerate}
Note that restricting $\tilde\theta^{\text{in}}$ and $\tilde\theta^{\text{out}}$
to $\set{1,\dots,n}\times S^1\times0$ and $\set{1,\dots,n'}\times S^1\times0$,
we obtain parametrisations of $\del^{\text{in}}W$ and $\del^{\text{out}}W$,
respectively, as in \iref{Notation}{nota:surface}.

The space of morphisms $\bfM\binom{n\,}{n'}$ is the moduli space of conformal
classes of such Riemann surfaces $W$, considered up to biholomorphism compatible
with the choice of parametrised collar neighbourhoods of the incoming and
outgoing boundary. We usually denote by $(W,\tilde\theta)$ a morphism, or
shortly by $W$ when it is not necessary to mention the parametrisation of the
collar neighbourhood of the boundary; here
$\smash{\tilde\theta\colon\set{1,\dots,n+n'} \times S^1\times [0;1)\to W}$ is
obtained by concatenation of $\tilde\theta^{\text{in}}$ and
$\tilde\theta^{\text{out}}$.

\begin{figure}
  \centering
  \begin{tikzpicture}[xscale=.7,yscale=.76]
    \draw[looseness=.35,thick,dred] (0,3) to[out=-90,in=-90] ++(1,0) to[out=90,in=90] (0,3);
    \draw[looseness=.35,thick,dgreen] (2,3) to[out=-90,in=-90] ++(1,0) to[out=90,in=90] (2,3);
    \draw[looseness=.35,thick,dgreen] (4,3) to[out=-90,in=-90] ++(1,0) to[out=90,in=90] (4,3);
    \draw[looseness=.35,thick,dred] (6,3) to[out=-90,in=-90] ++(1,0) to[out=90,in=90] (6,3);
    \draw[looseness=.35,thick,dyellow] (8,3) to[out=-90,in=-90] ++(1,0) to[out=90,in=90] (8,3);
    \fill[white, opacity=.8] (0,2.5) rectangle (11,3);   
    \draw[looseness=.3,thick] (1,0) to[out=-90,in=-90] ++(1,0) to[out=90,in=90] (1,0);
    \draw[looseness=.3,thick] (5,0) to[out=-90,in=-90] ++(1,0) to[out=90,in=90] (5,0);
    \draw[looseness=.3,thick] (7,0) to[out=-90,in=-90] ++(1,0) to[out=90,in=90] (7,0);
    \draw[looseness=.3,thick] (10,0) to[out=-90,in=-90] ++(1,0) to[out=90,in=90] (10,0);
    \draw (1,0) to[out=85,in=-90]  (0,3);
    \draw (2,0) to[out=95,in=-90]  (3,3);
    \draw[looseness=3.5] (1,3) to[out=-85,in=-95] (2,3);
    \draw[looseness=2.5] (5,3) to[out=-85,in=-95] (6,3);
    \draw[looseness=3.5] (7,3) to[out=-85,in=-95] (8,3);
    \draw[looseness=3.5] (6,0) to[out=85,in=95]   (7,0);
    \draw (4,3) to [out=-90,in=90] ++(-.5,-1.5) to [out=-90,in=95] (5,0);
    \draw (8,0) to[out=95,in=-90]  (9,3);
    \draw (10,0) to[out=85,in=-90] (9.5,1.5);
    \draw (11,0) to[out=95,in=-90] (11.5,1.5);
    \draw[looseness=1.3] (9.5,1.5) to[out=90,in=90] (11.5,1.5);
    \draw[looseness=1] (10.1,1.5) to[out=-90,in=-90] ++(.8,0) to[out=90,in=90] (10.1,1.5);
    \draw[thin] (10.15,1.38) to[out=130,in=-90] (10.05,1.6);
    \draw[thin] (10.85,1.38) to[out=50 ,in=-90] (10.95,1.6);
    \draw[looseness=1] (4.15,1.6) to[out=-90,in=-90] ++(.8,0) to[out=90,in=90] (4.15,1.6);
    \draw[thin] (4.2,1.48) to[out=130,in=-90] (4.1,1.7);
    \draw[thin] (4.9,1.48) to[out=50 ,in=-90] (5,1.7);
    \node[anchor=base,dred] at (.5,3.23) {\tiny $2$};
    \node[anchor=base,dred] at (6.5,3.23) {\tiny $1$};
    \node[anchor=base,dgreen] at (2.5,3.23) {\tiny $1$};
    \node[anchor=base,dgreen] at (4.5,3.23) {\tiny $2$};
    \node[anchor=base,dyellow] at (8.5,3.23) {\tiny $1$};
    \node[anchor=base] at (1.5,-.4) {\tiny $1$};
    \node[anchor=base] at (5.5,-.4) {\tiny $4$};
    \node[anchor=base] at (7.5,-.4) {\tiny $2$};
    \node[anchor=base] at (10.5,-.4) {\tiny $3$};
  \end{tikzpicture}
  \caption{An element in
    $\scM\binom{\textcolor{dgreen}{2},\textcolor{dyellow}{1},\textcolor{dred}{2}}{4}$.
    Note that the colours green, yellow, and red only indicate which inputs
    belong together, while the actual ‘colours’ of the inputs are $2$, $1$, and $2$
    respectively.}\label{fig:colouredM}
\end{figure}

The composition of two morphisms $(W,\tilde\theta)\colon n\to n'$ and
$(W',\tilde\theta')\colon n'\to n''$ is given by gluing the Riemann surfaces
$W\setminus\del^{\text{out}}W$ and $W'\setminus\del^{\text{in}}W'$, using the
identification
$U^{\text{out}}_{\del W}\setminus\del^{\text{out}}W \cong U^{\text{in}}_{\del
  W'}\setminus\del^{\text{in}}W'$ given by
\[\tilde\theta^{\text{out}}(j,\zeta,t)\equiv(\tilde\theta')^{\text{in}}(j,\zeta,1-t),\]
for all $1\le j\le n'$, $\zeta\in S^1$ and $0<t<1$. The resulting surface $W''$
is also endowed with collar neighbourhoods of the incoming and outgoing
boundaries, whose parametrisations are given by $\tilde\theta^{\text{in}}$ and
$(\tilde\theta')^{\text{out}}$ respectively.  The identity of $n\in \bfM$ is
described in \iref{Construction}{constr:Rnembedding}.

\begin{defi}\label{def:twisted_torus}
  For each $n\ge1$, the Lie group $T^n\rtimes \frS_n=(S^1)^n\rtimes \frS_n$ will
  be denoted by $R_n$. We regard $R_n$ as a \emph{twisted torus}: it
  is the isometry group of $\coprod_n S^1$.
\end{defi}

\begin{constr}\label{constr:Rnembedding}
  We can embed $R_n$ in the endomorphism space $\bfM\binom{n}{n}$; see
  \iref{Figure}{fig:tinm}: given $(z_1,\dotsc,z_n,\sigma)\in R_n$, we consider
  the morphism \mbox{$(W,\tilde\theta)\colon n\to n$}, which is given by the
  following:\looseness-1
  \begin{enumerate}
  \item we let $W\coloneqq\set{1,\dots,n}\times S^1\times [0;1]$, with
    the standard complex structure;\vspace*{-3px}
  \item we let $U_{\del W}^{\text{in}}=W\setminus\del^{\text{out}}W$ and
    $\tilde\theta^{\text{in}}\colon \set{1,\dots,n}\times
    S^1\times[0;1)\hookrightarrow W$.\vspace*{-3px}
  \item we let $U_{\del W}^{\text{out}}=W\setminus\del^{\text{in}}W$ and define
    $\tilde\theta^{\text{out}}\colon\set{1,\dots,n}\times S^1\times[0;1)\hookrightarrow W$ by
    \[\tilde\theta^{\text{out}}(j,\zeta,t)=\mathopen{}\big(\sigma^{-1}(j),z_j\cdot
        \zeta,1-t\big)\mathclose{}.\]
  \end{enumerate}
  This assignment embeds in fact $R_n$ as a group into the automorphisms of
  $n\in\bfM$; in particular the identity of $n$ can be described as the image of
  the unit of $R_n$ along the embedding; see \iref{Figure}{fig:tinm}.
\end{constr}

\begin{figure}[h]
  \centering
  \begin{tikzpicture}[xscale=1,yscale=.7]
    \node at (-.2,0) {$(z_1,\dotsc,z_n,\sigma)~ = $};
    \draw[looseness=.45,dred,thick] (2,1) to[out=90,in=90] ++(1,0) to[out=-90,in=-90] (2,1);
    \draw[looseness=.45,dred,thick] (5,1) to[out=90,in=90] ++(1,0) to[out=-90,in=-90] (5,1);
    \fill[white,opacity=.85] (1.5,1) rectangle (7.5,.5);
    \draw (2,-1) -- (2,1);
    \draw (3,-1) -- (3,1);
    \draw (5,-1) -- (5,1);
    \draw (6,-1) -- (6,1);
    \draw[looseness=.4,thick] (2,-1) to[out=90,in=90] ++(1,0) to[out=-90,in=-90] (2,-1);
    \draw[looseness=.4,thick] (5,-1) to[out=90,in=90] ++(1,0) to[out=-90,in=-90] (5,-1);
    \node[anchor=base] at (2.3,-1.45) {\tiny $\sigma(1)$};
    \node[anchor=base] at (5.3,-1.45) {\tiny $\sigma(n)$};
    \node[dred] at (2.3,1.38) {\tiny $1$};
    \node[dred] at (5.3,1.35) {\tiny $n$};
    \draw[bblue,thick] (2.8,1.1) -- (2.8,-.89);
    \node[bblue] at (2.85,1.3) {\tiny $1$};
    \node[bblue] at (2.9,-1.3) {\tiny $z_1$};
    \draw[bblue,thick] (5.8,1.1) -- (5.8,-.89);
    \node[bblue] at (5.85,1.3) {\tiny $1$};
    \node[bblue] at (5.9,-1.3) {\tiny $z_n$};
    \node at (4,0) {$\dotsb$};
  \end{tikzpicture}
  \caption{An instance of $R_n\hookrightarrow \bfM\binom{n}{n}$.}\label{fig:tinm}
\end{figure}
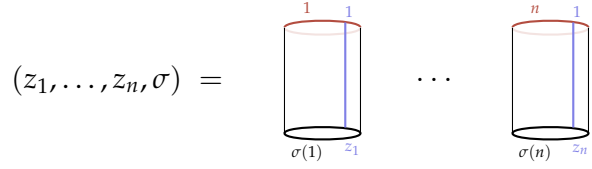

Consider now the subcategory $\bfM_\partial\subseteq\bfM$ containing all objects
and those cobordisms $W$ whose components have non-empty outgoing boundary.  In
detail, for fixed $k,n\ge 0$, the morphism space $\bfM_\partial(k,n)$ looks as
follows: $\pi_0\pa{\bfM_\partial(k,n)}$ is indexed by the number $1\le l\le n$
of path components, an unordered partition
$\{1,\dotsc,n\}=\bm{n}_1\sqcup\dotsb\sqcup \bm{n}_l$ into non-empty subsets with
$\min(\bm{n}_j)<\min(\bm{n}_{j+1})$, an ordered partition
$\{1,\dotsc,k\} = \bm{k}_1\sqcup\dotsb\sqcup \bm{k}_l$ into possibly empty sets,
and genera $g_1,\dotsc,g_l\ge 0$. If we let $n_j\coloneqq\#\bm{n}_j$ and
$k_j\coloneqq \# \bm{k}_j$, then the corresponding path component is homotopy
equivalent to $\frM_{g_1,k_1+n_1}\times\dotsb\times\frM_{g_l,k_l+n_l}$, by
restricting collar parametrisations to boundary parametrisations.

Let $n,n'\in\bfM_\del$, and note that the embedding $R_n\subset\bfM\binom{n}{n}$
has image inside $\bfM_\del\binom{n}{n}$. In the following lemma we consider the
right action of $R_n$ on $\bfM_\del\binom{n}{n'}$ by precomposition.

\begin{lem}
  \label{lem:freeactionRn}
  The group $R_n$ acts freely on the space $\bfM_\del\binom{n\,}{n'}$.
\end{lem}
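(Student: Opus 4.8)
The plan is to trace exactly how precomposition with an element of $R_n\subset\bfM_\del\binom{n}{n}$ acts on the collar parametrisations, and then to invoke the rigidity of holomorphic maps. First I would fix $(z_1,\dots,z_n,\sigma)\in R_n$ with associated cylinder cobordism $C=C(z,\sigma)\colon n\to n$ as in \tref{Construction}{constr:Rnembedding}, and a morphism $(W,\tilde\theta)\colon n\to n'$ in $\bfM_\del$. The composite $W\circ C$ is obtained by gluing the outgoing collar of $C$ to the incoming collar of $W$; since $C$ is a disjoint union of standard cylinders, $W\circ C$ is canonically biholomorphic to $W$ via a map $\Theta\colon W\circ C\to W$ which is the identity on $W$ and merely unfolds the cylinder on the rest. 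Transporting $\tilde\theta_{W\circ C}$ along $\Theta$, one finds that the outgoing collar parametrisation is literally unchanged, $(\tilde\theta'')^{\text{out}}=\tilde\theta^{\text{out}}$, whereas the incoming one is modified by a twisted rotation $\rho_{(z,\sigma)}$ of $\set{1,\dots,n}\times S^1\times[0;1)$ of the shape $(\tilde\theta'')^{\text{in}}(j,\zeta,t)=\tilde\theta^{\text{in}}(\sigma(j),z_{\sigma(j)}^{-1}\cdot\zeta,t)$ (the precise signs are immaterial). Thus the action of $(z,\sigma)$ sends $[W,\tilde\theta]$ to $[W,\tilde\theta'']$, affecting only the incoming collar, and $\rho_{(z,\sigma)}$ is the identity map precisely when $\sigma=\on{id}$ and $z_1=\dotsb=z_n=1$, i.e.\ when $(z,\sigma)$ is the unit of $R_n$.

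Next I would analyse the stabiliser: suppose $(z,\sigma)$ fixes $[W,\tilde\theta]\in\bfM_\del\binom{n}{n'}$. By definition of the morphism spaces of $\bfM$ this means there is a biholomorphism $\Psi\colon W\to W$ with $\Psi\circ\tilde\theta''=\tilde\theta$. Comparing the outgoing parts and using $(\tilde\theta'')^{\text{out}}=\tilde\theta^{\text{out}}$ gives $\Psi\circ\tilde\theta^{\text{out}}=\tilde\theta^{\text{out}}$, so $\Psi$ restricts to the identity on the outgoing collar of $W$, hence on a non-empty open subset of $W$. Since every connected component of $W$ has non-empty outgoing boundary, this open set meets every component, so by the identity theorem for holomorphic maps, applied componentwise, $\Psi=\on{id}_W$. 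Feeding this back into $\Psi\circ\tilde\theta''=\tilde\theta$ forces $\tilde\theta''=\tilde\theta$, i.e.\ $\rho_{(z,\sigma)}$ is the identity, whence $\sigma=\on{id}$ and all $z_j=1$. Therefore the stabiliser of every point of $\bfM_\del\binom{n}{n'}$ is trivial, which is the claim.

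I expect the only genuinely delicate point to be the bookkeeping in the first paragraph: identifying $W\circ C$ with $W$ (standard — attaching a collar cylinder does not change the conformal type) and, within that identification, checking that the outgoing collar parametrisation is unaffected, since that is exactly what makes the intertwining biholomorphism $\Psi$ holomorphically rigid. The rest is the classical fact that a holomorphic map agreeing with the identity on a non-empty open set of a connected Riemann surface is the identity; the hypothesis built into $\bfM_\del$ that each component of every cobordism touches the outgoing boundary is precisely what lets us apply it on each connected component of $W$.
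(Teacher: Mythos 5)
Your proof is correct and follows essentially the same route as the paper's: track how precomposition with an element of $R_n$ modifies only the incoming collar parametrisation while leaving $(\tilde\theta)^{\text{out}}$ unchanged, observe that any biholomorphism witnessing the equivalence then fixes the outgoing collar pointwise, and conclude by holomorphic rigidity (identity theorem) plus the $\bfM_\del$ hypothesis that every component of $W$ meets $\del^{\text{out}}W$. The paper is slightly terser—it writes down the representative $(W',\tilde\theta')$ of the composite directly rather than introducing the explicit unfolding biholomorphism $\Theta\colon W\circ C\to W$—but the content is identical.
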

\begin{proof}
  Let $(W,\tilde\theta)\colon n\to n'$ be a morphism in $\bfM\binom{n\,}{n'}$,
  let $(z_1,\dotsc,z_n,\sigma)\in R_n$, and suppose that
  $(W,\tilde\theta)=(W,\tilde\theta) \circ (z_1,\dotsc,z_n,\sigma)$. Note that the
  composed morphism \mbox{$(W,\tilde\theta) \cdot (z_1,\dotsc,z_n,\sigma)$} is represented by the
  pair $(W',\tilde\theta')$, where:
  \begin{enumerate}
  \item $W'=W$ and $(\tilde\theta')^{\text{out}}=\tilde\theta^{\text{out}}$;\vspace*{-2px}
  \item $(\tilde\theta')^{\text{in}}$ is the postcomposition of
    $\tilde\theta^{\text{in}}\colon \set{1,\dots,n}\times S^1\times[0;1)\to W$
    with the automorphism of $\set{1,\dots,n}\times S^1\times[0;1)$ given by
    \[(j,\zeta,t)\mapsto\mathopen{}\big(\sigma^{-1}(j),z_j\cdot \zeta,t\big)\mathclose{}.\]
  \end{enumerate}
  If $\psi\colon W\to W'$ is a diffeomorphism exhibiting the equivalence of
  $(W,\tilde\theta)$ and $(W',\tilde\theta')$ in $\bfM_\del\binom{n\,}{n'}$, then
  the first two conditions imply that $\psi$ restricts to the identity of $U_{\del
    W}^{\text{out}}$, i.e.\ on the image of $\tilde\theta^{\text{out}}$. Since
  $\psi$ is a holomorphic map, it must be the identity on a closed and open subset
  of $W$; since each connected component of $W$ has non-empty outgoing boundary,
  and thus intersects $U_{\del W}^{\text{out}}$, we conclude that $\psi$ must be
  the identity of $W$.
  Thus, the automorphism of $\set{1,\dots,n}\times S^1\times[0;1)$
  given by \mbox{$(j,\zeta,t)\mapsto(\sigma^{-1}(j),z_j\cdot \zeta,t)$} is in fact the
  identity of $\set{1,\dots,n}\times S^1\times[0;1)$, and this implies that
  $(z_1,\dots,z_n,\sigma)$ is the identity of $R_n$, as desired.
\end{proof}

\begin{defi}\label{defi:scM}
  We consider $\bfM_\partial$ as a symmetric monoidal category with monoidal sum
  being the disjoint union; since the monoidal sum behaves as the usual sum of
  natural numbers on objects, we have an associated coloured operad $\scM$ with
  colours $\N_{\ge 1}=\{1,2,\dotsc\}$ and
  \[\scM\binom{k_1,\dotsc,k_r}{n} \coloneqq \bfM_\partial\binom{k_1+\dotsb+k_r}{n}.\]
  The restriction $\scM|_1$ to the colour 1 is exactly Tillmann’s
  surface operad \cite{Tillmann00}. For each $\scM$-algebra
  $\bm{X}=(X_n)_{n\ge 1}$, the space $X_1$ is an algebra over $\scM|_1$.%
\end{defi}
 
\begin{expl}\label{ex:initialscM}
  In contrast to the little discs operads, the initial $\scM$-algebra is
  non-trivial: for instance, its colour-1 part $\scM\binom{}{1}$ homotopy
  equivalent to the familiar collection of moduli spaces
  \[\scM\binom{}{1} = \bfM_\partial\binom{0}{1} \simeq \coprod_{g\ge 0}\frM_{g,1}.\]
\end{expl}

\subsection{Tensor products and based operads}
\enlargethispage{\baselineskip}

\begin{defi}
  In \cite[§\,\textsc{ii}.3]{BoardmanVogtTensor}, Boardman and Vogt constructed
  a tensor product for operads. We are only interested in the following special
  case: let $\scA$ be a monochromatic operad and $\bfI$ be a small,
  topologically enriched category with object set $N$. Then $\scA\otimes \bfI$
  is an $N$-coloured operad with operation spaces
  \[(\scA\otimes\bfI)\binom{k_1,\dotsc,k_r}{n}
    = \scA(r)\times \prod_{i=1}^r \bfI\binom{k_i}{n},\]
  together with the following structure, where we denote operations in
  $\scA\otimes\bfI$ by $\mu\otimes (\nu_1,\dotsc,\nu_r)$ with $\mu\in \scA(r)$
  and $\nu_i\in\bfI\binom{k_i}{n}$:
  \begin{enumerate}
  \item symmetric actions
    $\tau^*(\mu\otimes (\nu_1,\dotsc,\nu_r)) = (\tau^*\mu)\otimes
    (\nu_{\tau(1)},\dotsc,\nu_{\tau(r)})$;\vspace*{-2px}
  \item identities $(\bEn^{\scA},\bEn^{\bfI}_n)$;\vspace*{-2px}
  \item compositions
    {
      \small
      \begin{align*}
        & \pa{\mu\otimes (\nu_1,\dotsc,\nu_r)}\circ \pa{\mu_1\otimes
          (\nu_{1,1},\dotsc,\nu_{1,s_1}),\dotsc,\mu_r\otimes (\nu_{r,1},\dotsc,\nu_{r,s_r})}
        \\ &\hspace*{0.4cm}\coloneqq \pa{\mu\circ (\mu_1,\dotsc,\mu_r)}\otimes
             \pa{\nu_1\circ\nu_{1,1},\dotsc,\nu_1\circ\nu_{1,s_1},\dotsc,
             \nu_r\circ\nu_{r,1},\dotsc,\nu_r\circ\nu_{r,s_r}}.
      \end{align*}
    }
  \end{enumerate}
  For $n\in N$, we also abbreviate
  $\mu\otimes n\coloneqq \mu\otimes (\bEn_{n},\dotsc,\bEn_{n})\in
  (\scA\otimes\bfI)\binom{n,\dotsc,n}{n}$. Note that
  $(\scA\otimes\bfI)$-algebras are the same as enriched functors
  $\bfI\to \Alg{\scA}$.

  This construction is bifunctorial: if $\rho_1\colon \scA\to \scA'$ is a
  morphism of operads and $\rho_2\colon\bfI\to \bfI'$ is a functor that is the
  identity on objects, then we get a morphism
  $\rho_1\otimes \rho_2\colon \scA\otimes\bfI\to {\scA'}\otimes{\bfI'}$.
\end{defi}

\begin{expl}\label{ex:CtensorN}
  Regard $N$ as the discrete category with objects $N$. Then we get
  \[(\scA\otimes N)\binom{k_1,\dotsc,k_r}{n} = \begin{cases}\scA(r) & \text{for
        $k_1=\dotsb=k_r=n$}\\\emptyset & \text{else,}\end{cases}\]
  and $(\scA\otimes N)$-algebras are just $N$-indexed families of
  $\scA$-algebras. One example that will be of particular importance for us
  later is the operad $\scD_1\otimes N$, which has a copy of the little 1-discs
  operad $\scD_1$ in each colour $n\in N$.
\end{expl}

\begin{defi}
  Consider the monochromatic operad $\scB$ with only two operations, namely the
  identity $\scB(1)=\{\bEn\}$ and a single nullary $\scB(0)=\{\frv\}$. Then
  $(\scB\otimes N)$-algebras are the same as families $\bm{X}=(X_n)_{n\in N}$ of
  based spaces.
  
  A \emph{based $N$-coloured operad} is an $N$-coloured operad $\scO$, together
  with an operad morphism $\scB\otimes N\to \scO$. A morphism of based
  $N$-coloured operads is an operad map $\rho\colon\scO\to \scP$ commuting
  with the two maps from $\scB\otimes N$.
\end{defi}

\begin{rem}
  \begin{enumerate}
  \item A based $N$-coloured operad is the same as an $N$-coloured operad
    $\scO$, together with a choice of nullary operation
    $\frv_n\in\scO\binom{}{n}$ for each colour $n\in N$, and a morphism
    $\rho\colon \scO\to \scP$ of based operads has to additionally satisfy
    $\rho(\frv^\scO_n)=\rho(\frv^\scP_n)$.
  \item The nullaries $\frv_n$ of a based operad can be used to ‘block’ inputs
    by precomposition with them. More precisely, for each input profile
    $K=(k_1,\dotsc,k_r)$ and $1\le i\le r$, we have a map
    \[d_i\colon \scO\binom{K}{n}\to \scO\binom{d_iK}{n},\quad
      \mu\mapsto \mu\circ_i \frv_{k_i}.\]
    In this way the functors  $\scO\binom{-}{n}\colon
    (N\wr\mathbf{\Sigma})^\op\to \mathbf{Top}$ can  be extended to functors
    $\scO\binom{-}{n}\colon (N\wr\Inj)^\op\to \mathbf{Top}$. Using these functors,
      one can give a concise description of the free $\scO$-algebra over a family
      $\bmX\coloneqq (X_n)_{n\in N}$ of based spaces: for each $n\in N$, we have
      \[F^\scO_{\scB\otimes N}(\bmX)_n \cong \int^{K\in
          N\wr\Inj}\scO\binom{K}{n}\times \bmX(K).\]
  \end{enumerate}
\end{rem}

\begin{expl}\label{ex:basedoperads}
  \begin{enumerate}
  \item The little discs operads $\scD_d$ have exactly one nullary operation
    and are thus canonically based. The same applies to $\scD_d\otimes N$ for
    each $N$.
  \item For each small and topologically enriched category $\bfI$, the tensor
    product $\scB\otimes \bfI$ differs from $\bfI$ only by the single nullary
    operation $\frv_n\in (\scB\otimes\bfI)\binom{}{n}$ for each colour $n$.
    Note that $(\scB\otimes\bfI)$-algebras are precisely functors
    $\bfI\to \mathbf{Top}_*$ to the category of based topological spaces.
  \item As a particular case of the previous example, let
    $\bm{G}=(G_n)_{n\in N}$ be a sequence of groups, and consider $\bm{G}$ as a
    (disconnected) groupoid. Then a $(\scB\otimes\bm{G})$-algebra is a sequence
    of based spaces $(X_n)_{n\in N}$ with a basepoint-preserving left action of
    $G_n$ on $X_n$ for all $n\in N$.
  \end{enumerate}
\end{expl}

\section{Infinite loop spaces from coloured operads with homological stability}
\label{sec:infiniteLSFromOHS}
\enlargethispage{-\baselineskip}
In this section we address the following problem: if $\scO$ is an $N$-coloured
operad with homological stability (which will be made precise soon), $\bfI$ is a
topological category together with a map $\scB\otimes \bfI\to \scO$, and if
$\bm{X}=(X_n)_{n\in N}$ an $(\scB\otimes\bfI)$-algebra, what can we say about
the homotopy type of $F^\scO_\scG(\bm{X})$?  By answering this question, we
extend the methods from \cite[§\,5]{BasterraEtAl}, where the monochromatic and
non-relative case was treated, i.e.\ $\bfI=N=*$, so $\scB\otimes\bfI=\scB$.

We briefly summarise the strategy of \cite[§\,5]{BasterraEtAl}: in a first step,
a notion of (monochromatic) ‘operad with homological stability’ is introduced:
such an operad $\scO$ comes in particular with a morphism of operads
$\imath\colon \scD_1\to\scO$, satisfying the weak homotopy commutativity
condition, which demands that $\imath(\new{\scD_1(2)})\subseteq\scO(2)$ lies in
a single path component;\footnote{In principle, any $A_\infty$-operad would
  suffice; we restrict to $\scD_1$ for simplicity.} hence it makes sense to
consider group completions of $\scO$-algebras.

In a second step, the authors of \cite{BasterraEtAl} focus on operads with
homological stability $\scO$ which come with a map
$\pi\colon \scO\to \scD_\infty$ of operads \new{under} $\scD_1$. Thus, we have
for each based space $X$ two interesting maps of $\scO$-algebras:
\begin{enumerate}
\item $F^\scO_\scB(X) \to F^\scO_\scB(*)=\scO(0)$ induced by $X\to *$;\vspace*{-2px}
\item $F^\scO_\scB(X)\to \pi^*F^{\scD_\infty}_\scB(X)$, the unit of the base-change
  adjunction.
\end{enumerate}
Intuitively, the first map forgets the space $X$, while the second map forgets
the operad $\scO$. In \cite[Thm.\,5.4]{BasterraEtAl}, it is shown that the
product map induces a weak equivalence
$\Omega BF^\scO_\scB(X)\to \Omega B\scO(0)\times \Omega^\infty \Sigma^\infty X$
on group completions, after identifying the group completion of
$F^{\scD^\infty}_{\scB}(X)$ with $\Omega^\infty\Sigma^\infty X$.

Finally, an operad with homological stability $\scO$ admits a replacement by
another operad with homological stability
$\scO'\coloneqq \scO\times \scD_\infty$, which has a comparison map
$\pi\colon \scO'\to \scD_\infty$, and under mild extra assumptions, the free
algebras $F^\scO_\scB(X)$ and $F^{\scO'}_\scB(X)$ are equivalent as
$A_\infty$-algebras and thus have equivalent group completions.

\subsection{Coloured operads with homological stability}

\begin{defi}
  An \emph{operad under $\scD_1$} is an $N$-coloured operad $\scO$
  together with an operad morphism $\imath\colon \scD_1\otimes N\to \scO$
  satisfying the weak homotopy commutativity condition levelwise, meaning that
  $\imath((\scD_1\otimes N)\binom{n,n}{n})\subseteq \scO\binom{n,n}{n}$ is
  contained in a single path component.
  \begin{enumerate}
  \item If $\scO$ is an operad under $\scD_1$, then $\scO$ is based by
    $\frv_n \coloneqq \imath(\frv\otimes n)\in \scO\binom{}{n}$. This gives rise
    to the input blocking maps
    \[\beta\colon \scO\binom{k_1,\dotsc,k_r}{n}\to \scO\binom{}{n},\quad
      \mu\mapsto \mu(\frv_{k_1},\dotsc,\frv_{k_r}).\]
  \item If $\scO$ is an operad under $\scD_1$, then each $\scO$-algebra
    $\bm{M}$ is levelwise an H-commutative $\scD_1$-algebra, so for each
    $n\in N$, the set $\pi_0(M_n)$ is an abelian monoid, whose (unique) binary
    operation we denote by ‘$\Ydown$’; there is a bar construction of $M_n$ and
    by the group completion theorem \cite{SegalMcDuff}, we have an isomorphism
    $H_\bullet(\Omega BM_n)\cong H_\bullet(M_n)[\pi_0(M_n)^{-1}]$.
  \end{enumerate}
  A morphism $\rho\colon \scO\to \scO'$ of operads
  $\imath\colon \scD_1\otimes N\to \scO$ and
  $\imath'\colon \scD_1\otimes N\to \scO'$ under $\scD_1$ is a morphism of
  operads such that $\rho\circ\imath=\imath'$ holds.  In that case, for each
  $\scO'$-algebra $\bm{X}$, the levelwise group completions of the
  $\scO'$-algebra $\bm{X}$ and of the $\scO$-algebra $\rho^*\bm{X}$ coincide, as
  they only depend on the $(\scD_1\otimes N)$-structure.
\end{defi}

\begin{nota}
  Let $\imath\colon \scD_1\otimes N\to \scO$ be an operad under
  $\scD_1$. We fix an operation $\frp\in\scD_1(2)$ and we abbreviate
  $\mu\Ydown \mu'\coloneqq\imath(\frp\otimes n)\circ (\mu,\mu')$ for operations
  $\mu\in\smash{\scO\binom{K}{n}}$ and $\mu'\in\smash{\scO\binom{K'}{n}}$. If
  $(M_n)_{n\in N}$ is an $\scO$-algebra and $x,x'\in M_n$, then we also write
  $x\Ydown x'\coloneqq \imath(\frp\otimes n)(x,x')$.
\end{nota}

The notation ‘$\Ydown$’ is pictorially inspired by the following example:

\begin{expl}\label{ex:genm}
  Recall \iref{Definition}{defi:scM} and \iref{Example}{ex:CtensorN}.  We have a
  morphism of operads $\imath_\scM\colon \scD_2\otimes\N_{\ge1}\to \scM$ given
  by applying the classical inclusion of $\scD_2$ into Tillmann’s surface operad
  level-wise; see \iref{Figure}{fig:d2inm}. This morphism restricts to a map
  $\scD_1\otimes\N_{\ge 1}\to \scM$ of operads satisfying the weak homotopy
  commutativity condition and thus turns $\scM$ into an operad under
  $\scD_1$.

  \begin{figure}[h]
    \centering
    \begin{tikzpicture}
      \draw (0,0) circle (1);
      \draw[thick,dred] (-.3,.3) circle (.3);
      \draw[thick,dgreen] (.3,-.3) circle (.4);
      \draw[thick,dyellow] (-.4,-.4) circle (.2);
      \node[dred] at (-.3,.3) {\tiny $3$};
      \node[dgreen] at (.3,-.3) {\tiny $1$};
      \node[dyellow] at (-.4,-.4) {\tiny $2$};
      \node at (1.44,0) {$\otimes\, n$};
      \node at (2.35,-.02) {$\mapsto$};
      \draw[looseness=.85,thick] (3,0) to[out=90,in=90] ++(2,0) to[out=-90,in=-90] (3,0);
      \draw[looseness=.6,thin,dgrey] (3.4,.48) to [out=-90,in=40] (3.35,.18);
      \draw[looseness=.6,thin,dgrey] (4,.48) to [out=-90,in=140] (4.05,.18);
      \draw[looseness=.6,thin,dgrey] (3.9,.15) to [out=-90,in=40] (3.85,-.15);
      \draw[looseness=.6,thin,dgrey] (4.7,.15) to [out=-90,in=140] (4.75,-.15);
      \draw[looseness=.6,thin,dgrey] (3.4,.1) to [out=-90,in=40] (3.35,-.2);
      \draw[looseness=.6,thin,dgrey] (3.8,.1) to [out=-90,in=140] (3.85,-.2);
      \draw[looseness=.85,thick,dred,fill=white,fill opacity=.9] (3.4,.45)
      to[out=90,in=90] ++(.6,0) to[out=-90,in=-90] (3.4,.45);
      \draw[looseness=.85,thick,dgreen,fill=white,fill opacity=.9] (3.9,.15)
      to[out=90,in=90] ++(.8,0) to[out=-90,in=-90] (3.9,.15);
      \draw[looseness=.85,thick,dyellow] (3.4,.1) to[out=90,in=90] ++(.4,0)
      to[out=-90,in=-90] (3.4,.1);
      \node[dred] at (3.7,.45) {\tiny $1$};
      \node[dgreen] at (4.3,.15) {\tiny $1$};
      \node[dyellow] at (3.6,.1) {\tiny $1$};
      \node at (4.9,-.45) {\tiny $1$};
      \node at (8.4,-.45) {\tiny $n$};
      \node at (5.75,0) {$\dotsb$};
      \draw[looseness=.85,thick] (6.5,0) to[out=90,in=90] ++(2,0) to[out=-90,in=-90] (6.5,0);
      \draw[looseness=.6,thin,dgrey] (6.9,.48) to [out=-90,in=40] (6.85,.18);
      \draw[looseness=.6,thin,dgrey] (7.5,.48) to [out=-90,in=140] (7.55,.18);
      \draw[looseness=.6,thin,dgrey] (7.4,.15) to [out=-90,in=40] (7.35,-.15);
      \draw[looseness=.6,thin,dgrey] (8.2,.15) to [out=-90,in=140] (8.25,-.15);
      \draw[looseness=.6,thin,dgrey] (6.9,.1) to [out=-90,in=40] (6.85,-.2);
      \draw[looseness=.6,thin,dgrey] (7.3,.1) to [out=-90,in=140] (7.35,-.2);
      \draw[looseness=.85,thick,dred,fill=white,fill opacity=.9] (6.9,.45)
      to[out=90,in=90] ++(.6,0) to[out=-90,in=-90] (6.9,.45);
      \draw[looseness=.85,thick,dgreen,fill=white,fill opacity=.9] (7.4,.15)
      to[out=90,in=90] ++(.8,0) to[out=-90,in=-90] (7.4,.15);
      \draw[looseness=.85,thick,dyellow] (6.9,.1) to[out=90,in=90] ++(.4,0)
      to[out=-90,in=-90] (6.9,.1);
      \node[dred] at (7.2,.45) {\tiny $n$};
      \node[dgreen] at (7.8,.15) {\tiny $n$};
      \node[dyellow] at (7.1,.1) {\tiny $n$};
    \end{tikzpicture}
    \caption{An instance of
      $\imath_\scM\colon (\scD_2\otimes \N_{\ge
        1})\binom{\textcolor{dgreen}{n},\textcolor{dyellow}{n},\textcolor{dred}{n}}{n}\to
      \scM\binom{\textcolor{dgreen}{n},\textcolor{dyellow}{n},\textcolor{dred}{n}}{n}$}
    \label{fig:d2inm}
  \end{figure}

  The input blocking maps $\beta\colon\scM\binom{K}{n}\to\scM\binom{}{n}$ are
  induced by capping each ingoing boundary curve with a disc, and the abelian
  monoid $\pi_0(\scM\binom{}{n})$ contains all isomorphism types of surfaces
  $\caS$ with $n$ ordered outgoing boundary curves and no incoming boundary
  curve with $\caS$ possibly disconnected, such that each path component of
  $\caS$ has non-empty boundary. The addition on \new{$\pi_0(\scM\binom{}{n})$}
  is given by gluing $n$ pairs of pants; the neutral element is given by an
  ordered collection of $n$ discs. The abelian monoid
  \new{$\pi_0(\scM\binom{}{n})$} is finitely generated: for instance it can be
  generated by the following elements $e_n^i$ and $\smash{e_n^{\smash{i,j}}}$;
  see \iref{Figure}{fig:gentn}:
  \begin{enumerate}
  \item For $1\le i\le n$, we let $e_n^i$ be the isomorphism type of surfaces
    with $n$ path components, such that the component carrying the $i$\Th{th}
    boundary curve has genus 1, whereas all others components are discs.
  \item For each $1\le i<j\le n$, we let $e_n^{\smash{i,j}}$ be the isomorphism
    type of surfaces with $n-1$ path components, all of genus 0, such that one
    component is a cylinder carrying the $i$\Th{th} and the $j$\Th{th} boundary
    curve.
  \end{enumerate}
  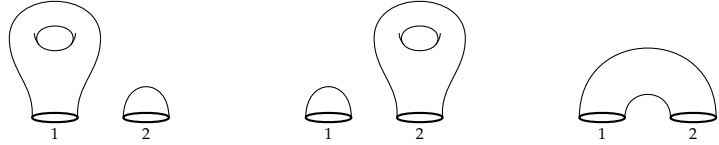
\begin{figure}[h]
    \centering
    \begin{tikzpicture}[xscale=.6,yscale=.7]
      \draw[looseness=.3,thick] (0,0) to[out=-90,in=-90] ++(1,0) to[out=90,in=90] (0,0);
      \draw[looseness=.3,thick] (2,0) to[out=-90,in=-90] ++(1,0) to[out=90,in=90] (2,0);
      \draw[looseness=.3,thick] (6,0) to[out=-90,in=-90] ++(1,0) to[out=90,in=90] (6,0);
      \draw[looseness=.3,thick] (8,0) to[out=-90,in=-90] ++(1,0) to[out=90,in=90] (8,0);
      \draw[looseness=.3,thick] (12,0) to[out=-90,in=-90] ++(1,0) to[out=90,in=90] (12,0);
      \draw[looseness=.3,thick] (14,0) to[out=-90,in=-90] ++(1,0) to[out=90,in=90] (14,0);
      \draw[looseness=2] (2,0) to [out=90,in=90] (3,0);
      \draw[looseness=2] (6,0) to [out=90,in=90] (7,0);
      \draw[looseness=1.5] (13,0) to[out=90,in=90] (14,0);
      \draw[looseness=1.5] (12,0) to[out=90,in=90] (15,0);
      \draw (0,0) to[out=85,in=-90] (-.5,1.5);
      \draw (1,0) to[out=95,in=-90] (1.5,1.5);
      \draw[looseness=1.2] (-.5,1.5) to[out=90,in=90] (1.5,1.5);
      \draw[looseness=1] (.1,1.5) to[out=-90,in=-90] ++(.8,0) to[out=90,in=90] (.1,1.5);
      \draw[thin] (.15,1.38) to[out=130,in=-90] (.05,1.6);
      \draw[thin] (.85,1.38) to[out=50 ,in=-90] (.95,1.6);
      \draw (8,0) to[out=85,in=-90] (7.5,1.5);
      \draw (9,0) to[out=95,in=-90] (9.5,1.5);
      \draw[looseness=1.2] (7.5,1.5) to[out=90,in=90] (9.5,1.5);
      \draw[looseness=1] (8.1,1.5) to[out=-90,in=-90] ++(.8,0) to[out=90,in=90] (8.1,1.5);
      \draw[thin] (8.15,1.38) to[out=130,in=-90] (8.05,1.6);
      \draw[thin] (8.85,1.38) to[out=50 ,in=-90] (8.95,1.6);
      \node [anchor=base] at (0.5,-.4) {\tiny $1$};
      \node [anchor=base] at (2.5,-.4) {\tiny $2$};
      \node [anchor=base] at (6.5,-.4) {\tiny $1$};
      \node [anchor=base] at (8.5,-.4) {\tiny $2$};
      \node [anchor=base] at (12.5,-.4) {\tiny $1$};
      \node [anchor=base] at (14.5,-.4) {\tiny $2$};
    \end{tikzpicture}
    \caption{The three generators $e_2^1$, $e_2^2$ and $e_2^{1,2}$ of
      $\pi_0(\scM\binom{}{2})$.}\label{fig:gentn}
  \end{figure}
\end{expl}

\begin{constr}[Stable operation space]\label{constr:stabopspace}
  We call an $N$-coloured operad $\scO$ under $\scD_1$ \emph{admissibly
    graded} if for each $n\in N$, the abelian monoid $\pi_0(\scO\binom{}{n})$ is
  finitely generated and, for each source profile $(k_1,\dotsc,k_r)$, the
  \emph{degree map}
  \[\abs{\cdot}\colon\scO\binom{k_1,\dotsc,k_r}{n}\stackrel{\beta}{\longrightarrow}
    \scO\binom{}{n}\to \pi_0(\scO\binom{}{n}), \quad \mu\mapsto \abs{\mu}=\pi_0(\beta(\mu))\]
  is surjective.\footnote{In the monochromatic setting, the degree map is
    automatically surjective: by the weak ho\-mo\-to\-py commutativity
    condition, $\pi_0(\scO)$ contains the commutative operad $\fComm$, and if
    we write $\fComm(r)=\set{\frp_r}$, then
    $\beta(\frp_{r+1}\circ_1 \delta)=\frp_1\circ\delta = \bEn\circ \delta =
    \delta$ for each $\delta\in \pi_0(\scO(0))$. For the coloured case,
    though, it seems necessary to additionally assume this property.} In this
  case we write for each $\delta\in \pi_0(\scO\binom{}{n})$
  \[\scO\binom{k_1,\dotsc,k_r}{n}^\delta \coloneqq\set{\mu\in\scO
      \binom{k_1,\dotsc,k_r}{n};\,\abs{\mu}=\delta}\ne\emptyset.\]
  We choose for each $n\in N$ a finite generating set
  $E_n\subseteq \pi_0(\scO\binom{}{n})$ and let $e_n$ be the sum of all elements
  from $E_n$. If we fix a nullary operation $\tilde{e}_n\in \scO\binom{}{n}$
  with $\abs{\tilde{e}_n}=e_n$, called \emph{propagator}, then we obtain for
  each component $\delta\in \pi_0(\scO\binom{}{n})$ and each input profile $ K$
  a \emph{stabilising map}
  \[\on{stab}\colon \scO\binom{K}{n}^\delta
    \to \scO^{\delta\Ydown e_n}\binom{K}{n},\quad
    \mu\mapsto \mu\Ydown \tilde{e}_n.\]
  From this, we can form the \emph{space of stable operations} from $K$ to $n$,
  \[\scO\binom{K}{n}^\infty\coloneqq
    \on{hocolim}\mathopen{}\left(\hspace*{-4px}
      \begin{tikzcd}[column sep=2.2em]
        \scO\binom{K}{n}^{0}\ar[r,"\on{stab}"]
        & \scO\binom{K}{n}^{e_0}\ar[r,"\on{stab}"]
        & \scO\binom{K}{n}^{2e_0}\ar[r,"\on{stab}"]
        & \dotsb
      \end{tikzcd}\!\!\right)\mathclose{}.\]
\end{constr}

\begin{defi}
  Let $\scO$ be an $N$-coloured operad under $\scD_1$ which is admissibly
  graded.  By the associativity of the operadic composition, input blocking and
  stabilisation commute, i.e.\ for each $\delta$, the square
  \[\begin{tikzcd}
      \scO\binom{k_1,\dotsc,k_r}{n}^\delta\ar[r,"\on{stab}"]\ar[d,swap,"\beta"] &
      \scO\binom{k_1,\dotsc,k_r}{n}^{\delta\Ydown e_n}\ar[d,"\beta"]\\
      \scO\binom{}{n}^\delta\ar[r,swap,"\on{stab}"] & \scO\binom{}{n}^{\delta\Ydown e_n}.
    \end{tikzcd}\]
  commutes. Hence we obtain a stable input blocking
  $\beta^K_n\colon \scO\binom{K}{n}^\infty\to \scO\binom{}{n}^\infty$ for each
  input profile $K$, which depends, up to homotopy, only on the path component
  from which the propagator is chosen, i.e.\ on the choice of generating set
  $E_n$.

  We call $\scO$ \new{an} \emph{operad with homological stability} if there is a
  choice of generating sets such that all stable input blockings $\beta^K_n$
  induce isomorphisms in integral homology.
\end{defi}

\begin{expl}
  The coloured surface operad $\scM$ is admissibly graded, and we may use the
  generating sets from \iref{Example}{ex:genm}.

  It is even an operad with homological stability: here we use that multiplying
  with the propagator auto\-matically yields a connected cobordism and increases
  the genus by at least one, see \iref{Figure}{fig:stabm}, so the stable input
  blocking is the cap\-ping map
  $\frM_{\infty,n+k_1+\dotsb+k_r}\to \frM_{\infty,n}$ between stable moduli
  spaces of Riemann surfaces: this is a homology equivalence by Harer’s
  stability theorem \cite{Harer}.\vspace*{-10px}
\end{expl}

\begin{figure}
  \centering
  \begin{tikzpicture}[xscale=.7,yscale=.78]
    \draw[looseness=.35,dgreen,thick] (0,6) to[out=90,in=90] ++(1,0) to[out=-90,in=-90] (0,6);
    \draw[looseness=.35,dgreen,thick] (4,6) to[out=90,in=90] ++(1,0) to[out=-90,in=-90] (4,6);
    \draw[looseness=.35,dred,thick]   (2,6) to[out=90,in=90] ++(1,0) to[out=-90,in=-90] (2,6);
    \fill[white,opacity=.85] (0,6) rectangle (7,5);
    \draw (0,6) to[out=-90,in=90] ++(-.5,-1.5) to[out=-90,in=90] (0,3);
    \draw (1,6) to[out=-90,in=90] ++(.5,-1.5)  to[out=-90,in=90] (1,3);
    \draw[looseness=.3,thick] (0,3) to[out=90,in=90] ++(1,0) to[out=-90,in=-90] (0,3);
    \draw[looseness=.3,thick] (3,3) to[out=90,in=90] ++(1,0) to[out=-90,in=-90] (3,3);
    \draw[looseness=.3,thick,dyellow] (7,3) to[out=90,in=90] ++(1,0) to[out=-90,in=-90] (7,3);
    \draw[looseness=.3,thick,dyellow] (9,3) to[out=90,in=90] ++(1,0) to[out=-90,in=-90] (9,3);
    \draw[looseness=1.5,dyellow] (8,3) to[out=90,in=90] (9,3);
    \draw[dyellow] (7,3) to[out=90,in=-90] (6.5,4.5);
    \draw[dyellow] (10,3) to[out=90,in=-90] (10.5,4.5);
    \draw[dyellow] (6.5,4.5) to[out=90,in=180] ++(.8,.7) to[out=0,in=180]
    ++(1.2,-.3) to[out=0,in=180] ++(1.2,.3) to[out=0,in=90] (10.5,4.5);
    \draw (3,3) to[out=90,in=-90] (2,6);
    \draw (4,3) to[out=90,in=-90] (5,6);
    \draw[looseness=3] (3,6) to[out=-90,in=-90] (4,6);
    \fill[white,opacity=.85] (3,3) rectangle (4,2);
    \fill[white,opacity=.85] (9,3) rectangle (10,2);
    \draw[looseness=.8,dblue] (4,3) to[out=-90,in=-90] (9,3);
    \draw[dblue] (7,0) to[out=90,in=-90] (10,3);
    \draw[dblue] (6,0) to[out=90,in=-90] (3,3);
    \draw[fill=white,white,opacity=.85] (7,3) -- (8,3) to[out=-90,in=90]
    (4.5,0) -- (3.5,0) to[out=90,in=-90] (0,3) -- (1,3) to[out=-38,in=-125] (7,3);
    \draw[dblue] (0,3) to[out=-90,in=90] (3.5,0);
    \draw[looseness=.7,dblue] (1,3) to[out=-90,in=-90] (7,3);
    \draw[dblue] (8,3) to[out=-90,in=90] (4.5,0);
    \draw[looseness=.3,dblue,thick] (6,0) to[out=90,in=90] ++(1,0) to[out=-90,in=-90] (6,0);
    \draw[looseness=.3,dblue,thick] (3.5,0) to[out=90,in=90] ++(1,0) to[out=-90,in=-90] (3.5,0);
    \draw[looseness=1] (.1,4.5) to[out=-90,in=-90] ++(.8,0) to[out=90,in=90] (.1,4.5);
    \draw[thin] (.15,4.38) to[out=130,in=-90] (.05,4.6);
    \draw[thin] (.85,4.38) to[out=50 ,in=-90] (.95,4.6);
    \draw[looseness=1,dyellow] (7.1,4.4) to[out=-90,in=-90] ++(.8,0) to[out=90,in=90] (7.1,4.4);
    \draw[thin,dyellow] (7.15,4.28) to[out=130,in=-90] (7.05,4.5);
    \draw[thin,dyellow] (7.85,4.28) to[out=50 ,in=-90] (7.95,4.5);
    \draw[looseness=1,dyellow] (9.1,4.4) to[out=-90,in=-90] ++(.8,0) to[out=90,in=90] (9.1,4.4);
    \draw[thin,dyellow] (9.15,4.28) to[out=130,in=-90] (9.05,4.5);
    \draw[thin,dyellow] (9.85,4.28) to[out=50 ,in=-90] (9.95,4.5);
    \node[dyellow] at (10.72,5) {\tiny $\tilde{e}_2$};
    \node at (-.45,5.3) {\tiny $\mu$};
    \node[dblue] at (7.55,.6) {\tiny $\Ydown\hspace*{-1px}_2$};
    \node[dblue,anchor=base] at (4,-.45)   {\tiny $1$};
    \node[dblue,anchor=base] at (6.5,-.45) {\tiny $2$};
    \node[dgreen,anchor=base] at (.5,6.25) {\tiny $2$};
    \node[dgreen,anchor=base] at (4.5,6.25) {\tiny $1$};
    \node[dred,anchor=base]   at (2.5,6.25) {\tiny $1$};
  \end{tikzpicture}
  \caption{A single stabilisation step on
    $\scM\binom{\textcolor{dgreen}{2}, \textcolor{dred}{1}}{2}$. Note that
    $\abs{\tilde{e}_2} = e_2^1\Ydown e_2^2\Ydown e_2^{1,2}$ is the isomorphism
    class of surfaces of type $\Sigma_{2,2}$.}
  \label{fig:stabm}
\end{figure}

\subsection{Derived base-change and a splitting result}

Recall that we want to establish an analogue of \cite[Thm.\,5.4]{BasterraEtAl}
for the coloured case \emph{and relative} case, i.e.\ we want to consider
relatively free algebras, relative to a map $\scP\to \scO$ of based operads,
where we will soon restrict to the case $\scP=\scB\otimes\bfI$ for an enriched
category $\bfI$.

The most convenient setting for such a discussion does not use the strict
functor $F^\scO_\scP$, but a homotopically better behaved one, which we denote
by $\LF^\scO_\scP$. This simplifies many point-set issues, and with regard to
our original problem, it will turn out to be equivalent to the space we want to
understand.

The functor $\LF^\scO_\scP$ can be constructed by considering the model
structure on the categories of $\scO$- and $\scP$-algebras as in
\cite{BergerMoerdijk2}, but we decided to give an explicit description. Here we
assume that the reader is familiar with monads and their two-sided bar
constructions, as introduced in \cite{May}.

\begin{constr}
  Let $\scP\to \scO$ be a map of based $N$-coloured operads. We obtain
  monads $\bbO\coloneqq U^\scO_{\scB\otimes N} F^\scO_{\scB\otimes N}$ and
  $\bbP\coloneqq U^\scP_{\scB\otimes N} F^\scP_{\scB\otimes N}$ on $\Top_*^N$,
  and $\bbO$ is a left $\bbP$-functor by the transformation
  $\bbO\bbP\Rightarrow \bbO^2\Rightarrow \bbO$.  For each $\scP$-algebra $\bmX$,
  we consider the two-sided bar construction $B_\bullet(\bbO,\bbP,\bmX)$ with
  $p$-simplices \mbox{$B_p(\bbO,\bbP,\bmX)=\bbO\bbP^p U^\scP_{\scB\otimes N}\bmX$},
  which is an $N$-coloured simplicial space, and we define the \emph{derived free
    algebra} \mbox{$\LF^\scO_\scP(\bmX)\coloneqq |B_\bullet(\bbO,\bbP,\bmX)|$} to be
  its levelwise geometric realisation. Then $\LF^\scO_\scP(\bmX)$ is itself an
  $\scO$-algebra with multiplication
  \[\bbO|B_\bullet(\bbO,\bbP,\bmX)|
    \cong |B_\bullet(\bbO^2,\bbP,\bmX)|
    \to |B_\bullet(\bbO,\bbP,\bmX)|,\]
  where the first identification is due to \cite[Lem.\,9.7]{May} and the last
  map is given by $|B_\bullet(\kappa,\bbP,\bmX)|$ for the operadic composition
  $\kappa\colon \bbO^2\Rightarrow \bbO$. Second, we have a map
  $\LF^\scO_\scP(\bmX)\to F^\scO_\scP(\bmX)$ of $\scO$-algebras, by noticing
  that $F^\scO_\scP(\bmX)$ is the reflexive coequaliser of
  $B_1(\bbO,\bbP,\bmX)\rightrightarrows B_0(\bbO,\bbP,\bmX)$,
  see \iref{Definition}{defi:baseChange}.
\end{constr}

Before stating our main theorem, let us fix once and for all the point-set
requirements we want to assume.
\enlargethispage{-\baselineskip}
\clearpage

\begin{setting}\label{set:set}
  Throughout this section, we consider the following:
  \begin{enumerate}
  \item Let $\scO$ be an $N$-coloured $\frS$-free operad with homological
    stability such that the inclusions
    $\set{\bEn_n}\hookrightarrow \scO\binom{n}{n}$ are cofibrations.
  \item Let $\bfI$ be a topologically enriched category with object set $N$ such
    that the inclusions
    $\set{\bEn_n}\hookrightarrow \bfI\binom{n}{n}$ are cofibrations.
    We assume that there is a map $\scB\otimes\bfI\to \scO$ of based $N$-coloured operads.
  \item Let $\bmX=(X_n)_{n\in N}$ be an $(\scB\otimes\bfI)$-algebra,
    or in other words, an enriched functor $X_\bullet\colon\bfI\to \Top_*$, and we assume that each $X_n$
    is well-based.
  \end{enumerate}
  Moreover, we assume that all involved spaces are Hausdorff.
\end{setting}

Of course, the example we have in mind is $\scO$ being the surface operad $\scM$
\new{and} $\bfI$ being the family $\bm{R}=(R_n)_{n\ge 1}$ of twisted tori.  We
want to show the following:

\begin{theo}[Splitting theorem]\label{theo:splitOHS}
  In the above \iref{Setting}{set:set}, we have, for each $n\in N$, a weak
  equivalence of loop spaces
  \[\Omega B\LF^\scO_{\scB\otimes\bfI}(\bmX)_n\simeq
    \Omega B\scO\binom{}{n}\times \Omega^\infty\Sigma^\infty
    \on{hocolim}_{\bfI}(X_\bullet).\]
\end{theo}

The proof of \iref{Theorem}{theo:splitOHS} will occupy the rest of this section.
Let us start by establishing a map that compares the two sides.

To do so, we start by constructing an $N$-coloured version of the
$E_\infty$-operad $\scD_\infty$ and show that we can, without loss of
generality, assume that there is a comparison map from $\scO$ to it:

\begin{constr}
  For each colour set $N$, we consider the \emph{chaotic category} $EN$ with
  object set $N$ and morphism spaces $(EN)\binom{k}{n}=*$ for all $k,n\in N$.
  We consider the category $\scD_\infty\otimes EN$.
\end{constr}

\begin{lem}
  For the proof of \iref{Theorem}{theo:splitOHS}, we can without loss of
  generality assume a map $\pi\colon \scO\to \scD_\infty\otimes EN$ such that
  the diagram
  \[\begin{tikzcd}[column sep=2em]
      \scB\otimes N\ar[r]\ar[d] & \scB\otimes \bfI\ar[d]\ar[ddr,bend left=15] & \\
      \scD_1\otimes N\ar[r]\ar[rrd,bend right=15] & \scO\ar[dr,"\pi"] & \\
      & & \scD_\infty\otimes EN
    \end{tikzcd}\]
  commutes, where all arrows apart from $\pi$ are either given or induced by the
  canonical maps $\scB\to \scD_1\to\scD_\infty$ and $N\to \bfI\to EN$.
\end{lem}
\begin{proof}
  The commutativity of the square is part of the general setting: recall that we
  assumed that $\scB\otimes \bfI\to \scO$ is a map of \emph{based} operads, and
  $\scO$ is ca\-no\-ni\-cally based as an operad under $\scD_1$.

  To establish the map $\pi$, we replace $\scO$ by a slightly larger operad: if
  we consider the product operad
  $\scO'\coloneqq \scO\times(\scD_\infty\otimes EN)$, together
  with:\vspace*{-2px}
  \begin{itemize}
  \item the diagonal inclusion $\scD_1\otimes N \to \scO'$,\vspace*{-4px}
  \item the diagonal inclusion $\scB\otimes \bfI\to \scO'$,\vspace*{-4px}
  \item the second projection $\scO'\to \scD_\infty\otimes EN$,\vspace*{-2px}
  \end{itemize}
  then the above diagram clearly commutes with $\scO$ instead of $\scO'$.
  Moreover, note that \emph{each} operation space of $\scD_\infty\otimes EN$ is
  contractible: hence $\scO'$ is again admissibly graded with
  $\pi_0(\scO'\binom{}{n})=\pi_0(\scO\binom{}{n})$ and $\scO'$ is again an
  operad with homological stability, satisfying
  $\scO'\binom{}{n}=\scO\binom{}{n}$.

  Finally, the first projection $\scO'\to \scO$ induces a map of monads
  $\bbO'\Rightarrow \bbO$ and hence a map
  $\LF^{\scO'}_{\scB\otimes \bfI}(\bmX)\to
  U^\scO_{\scO'}\LF^{\scO}_{\scB\otimes\bfI}(\bmX)$ of $\scO'$-algebras, which
  is in particular a map of $A_\infty$-algebras. If we denote by $\bbI$ the
  monad for $\scB\otimes \bfI$, then we can easily see that, since $\scO$ is
  $\frS$-free, each $X_n$ is well-based, and every space is Hausdorff, the
  simplicial map $B_\bullet(\bbO',\bbI,\bmX)\to B_\bullet(\bbO,\bbI,\bmX)$ is
  levelwise an equivalence. Second, both simplicial spaces are proper \new{in
    the sense of \cite[§\,11]{May}}, by using that the inclusions of the
  identities are cofibrations. Therefore, by \cite[Thm.\,A.4]{mayGroup}, the
  induced map on the geometric realisations
  $\LF^{\scO'}_{\scB\otimes \bfI}(\bmX)$ and $\LF^\scO_{\scB\otimes \bfI}(\bmX)$
  is again an equivalence, whence their group completions are equivalent as loop
  spaces.
\end{proof}

Using the lemma, we obtain, as in the monochromatic case, two maps:
\begin{enumerate}
\item the map $\bmX\to *$ to $* = (*)_{n\in N}$ induces a map
  $\LF^\scO_{\scB\otimes\bfI}(\bmX)\to \LF^\scO_{\scB\otimes\bfI}(*)$ of
  $\scO$-algebras, which is in particular a map of $A_\infty$-algebras.
\item the morphism $\scO\to \scD_\infty\otimes EN$ induces a map
  $\LF^\scO_{\scB\otimes\bfI}(\bmX)\to \LF^{\scD_\infty\otimes
    EN}_{\scB\otimes\bfI}(\bmX)$ of $A_\infty$-algebras.
\end{enumerate}

The two targets can be identified with the following spaces:

\begin{lem}
  For each $n\in N$, we have equivalences of $A_\infty$-algebras
  \begin{align*}
    \LF^\scO_{\scB\otimes \bfI}(*)_n &\simeq \scO\binom{}{n},\\[2px]
    \LF^{\scD_\infty\otimes EN}_{\scB\otimes \bfI}(\bmX)_n &\simeq F^{\scD_\infty}_\scB(\on{hocolim}_\bfI (X_\bullet)).
  \end{align*}
\end{lem}
\begin{proof}
  For the first equivalence, we note that since
  $(\scB\otimes\bfI)\binom{}{n}=*$, we have
  $F^{\scB\otimes\bfI}_{\scB\otimes N}(*)_n=*$.  Now consider the natural map
  \[\LF^\scO_{\scB\otimes\bfI}(*)
    = \LF^\scO_{\scB\otimes\bfI}(F^{\scB\otimes\bfI}_{\scB\otimes N}(*))
    \to F^\scO_{\scB\otimes\bfI}(F^{\scB\otimes\bfI}_{\scB\otimes N}(*))
    = F^\scO_{\scB\otimes N}(*)
    = \scO\binom{}{n}.\]
  This map arises from the augmentation
  $B_\bullet\coloneqq B_\bullet(\bbO,\bbI,\bbI(*))\to B_{-1}\coloneqq \bbO(*)$,
  which has a (colour-wise) extra degeneracy $s_{-1}\colon B_{p}\to B_{p+1}$
  induced by the unit of $\bbI$; and hence is an equivalence by
  \cite[Cor.\,4.5.2]{Riehl}.

  For the second equivalence, we start with the general observation that for a
  sequence $\scQ\to \scP\to \scO$ of $N$-coloured operads, we have a levelwise
  equivalence of $\scO$-algebras among the derived algebras
  $\LF^\scO_\scQ(\bmX)\simeq \LF^\scO_\scP(\LF^\scP_\scQ(\bmX))$: by
  construction, the left side is the realisation of the bisimplicial
  space with \mbox{$B_{p,q}=\bbO\bbP^{p+1}\bbQ^q\bmX$}. If we first realise each
  $B_{p,\bullet}$, then we obtain a simplicial space $\tB_\bullet$ with
  $\tB_p = |B_\bullet(\bbO\bbP^{p+1},\bbQ,\bmX)|$. Again, we have an
  aug\-men\-ta\-tion map
  $\tB_\bullet\to \tB_{-1}\coloneqq |B_\bullet(\bbO,\bbQ,\bmX)|$, that admits an
  extra degeneracy by the unit of $\bbP$, whence the induced map
  $|B_{\bullet,\bullet}|\simeq |\tB_\bullet|\to \tB_{-1}$ is an equivalence, as
  desired. In our case, we obtain for each $n\in N$ an equivalence of
  $\scD_\infty$-algebras
  \begin{align*}
    \LF^{\scD_\infty\otimes EN}_{\scB\otimes \bfI}(\bmX)_n
    \simeq \LF^{\scD_\infty\otimes EN}_{\scB\otimes EN}(\LF^{\scB\otimes EN}_{\scB\otimes \bfI}(\bmX))_n
    \simeq F^{\scD_\infty}_\scB(\on{hocolim}_\bfI(X_\bullet)),
  \end{align*}
  where for the last equivalence, we use that
  $\LF^{\scB\otimes EN}_{\scB\otimes \bfI}(\bmX)$ is, when regarded as a functor
  $EN\to \Top_*$, the constant diagram with value $\on{hocolim}_\bfI(\bmX)$, and
  $\LF^{\scD_\infty\otimes EN}_{\smash{\scB\otimes EN}}$ is equivalent to the
  postcomposition with $F^{\scD_\infty}_\scB$, using that the natural map
  \mbox{$\smash{\tilde F^{\scD_\infty}_{\smash\scB}(X)\to F^{\scD_\infty}_{\smash\scB}(X)}$}
  is an equivalence
  for each based space $X$, as the underlying simplicial space is constant.
\end{proof}

Putting everything together, we get, for each $n\in N$, a map of
$A_\infty$-algebras
\[\LF^\scO_{\scB\otimes\bfI}(\bmX)\to \scO\binom{}{n}\times
  F^{\scD_\infty}_\scB(\on{hocolim}_\bfI(X_\bullet)),\]
which, after group completion, gives us the map from
\iref{Theorem}{theo:splitOHS}, In the next subsection, we show that it is
an equivalence, and, by doing so, prove the theorem.

\subsection{Proof of the splitting theorem}

For the proof of \iref{Theorem}{theo:splitOHS}, we denote the monads associated
with $\scO$ and $\scD_\infty\otimes EN$ by $\bbO$ and $\bbD$, respectively, and
we write $\bbO(\bmX)_n$ and $\bbD(\bmX)_n$ for their respective
$n$\textsuperscript{th} levels.
Since permuting and blocking inputs preserve the degree of the operations, we
get, for each colour $n\in N$ and each degree $\delta\in\pi_0(\scO\binom{}{n})$,
a functor \mbox{$\smash{\scO\binom{-}{n}{}^\delta\colon (N\wr\Inj)^\op\to \mathbf{Top}}$}. This
gives rise to a decomposition
\[\bbO(\bm{X})_n =\coprod_{\delta}\bbO(\bm{X})^\delta_n\quad\text{with}\quad \bbO(\bm{X})^\delta_n\coloneqq
  \int^{K\in N\wr\Inj}\scO\binom{K}{n}^\delta\times \bm{X}(K).\]
We denote by $\tilde{x}_n=[\tilde{e}_n;()]\in \bbO(\bm{X})^{e_n}_n$ the image of
the propagator and define, in analogy with
\iref{Construction}{constr:stabopspace},
\[\bbO(\bm{X})^\infty_n \coloneqq \on{hocolim}\mathopen{}\left(\hspace*{-5px}
    \begin{tikzcd}[column sep=2.8em] \bbO(\bm{X})^0_n\ar[r,"-\Ydown \tilde{x}_n"] &
      \bbO(\bm{X})^{e_n}_n\ar[r,"-\Ydown\tilde{x}_n"] &
      \bbO(\bm{X})^{2e_n}_n\ar[r,"-\Ydown\tilde{x}_n"] & \dotsb
    \end{tikzcd}\hspace*{-5px}\right)\mathclose{}.\]

\noindent Again, we have two relevant maps:
\begin{enumerate}
\item The map $f\colon \bbO(\bm{X})\to \bbO(*)$ decomposes into maps
  $f^\delta_n\colon \bbO(\bm{X})^\delta_n\to \bbO(*)^\delta_n$ which are
  compatible with stabilisations. Therefore we obtain a map between the mapping
  telescopes $f_n^\infty\colon \bbO(\bm{X})^\infty_n \to \bbO(*)_n^\infty$.
\item Similarly, the map of $\scO$-algebras $\eta\colon \bbO(\bm{X})\to\bbD(\bm{X})$
  restricts to maps of spaces $\eta^\delta_n\colon
  \bbO(\bm{X})^\delta_n\to \bbD(\bm{X})_n$, and the triangle
  \[\begin{tikzcd}[column sep=2.1em,row sep=1.6em]
      \bbO(\bmX)^\delta_n\ar{dr}[swap]{\eta^\delta_n}\ar[rr,"-\Ydown\tilde{x}_n"] &&
      \bbO(\bmX)^{\delta\Ydown e_n}_n\ar[dl,"\eta^{\delta\Ydown e_n}_n"]\\ & \bbD(\bmX)_n
    \end{tikzcd}\]
  is H-commutative, whence we obtain a map from the mapping telescope
  $\eta_n^\infty\colon \bbO(\bm{X})^\infty_n\to \bbD(\bm{X})_n$.
\end{enumerate}

\begin{lem}[Key lemma]\label{lem:lasthequiv}
  For each colour $n\in N$, the product map
  \[(f_n^\infty,\eta_n^\infty)\colon \bbO(\bm{X})^\infty_n\to
    \bbO(*)_n^\infty\times \bbD(\bm{X})_n\]
  is a homology equivalence.
\end{lem}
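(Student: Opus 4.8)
The strategy is to follow the scheme of \cite{BasterraEtAl} adapted to the coloured, relative setting, and to reduce the statement to a homology computation that only depends on the $\scD_2$-algebra structure of the three spaces involved, together with the homological stability hypothesis on $\scO$. First I would observe that since input permutation, input blocking, and precomposition with elements of $G_k$ all preserve the degree (as recorded in the paragraph preceding the Key Lemma), the space $O_n(\bmX)$ genuinely decomposes as $\coprod_{\delta\in\caA_n}O^\delta_n(\bmX)$, and the stabilisation map $-\Yn\tilde x_n$ shifts $\delta$ by $e_n$; hence $O^\infty_n(\bmX)$ is a well-defined mapping telescope and the maps $f^\infty_n$ and $\eta^\infty_n$ are defined up to homotopy. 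Both target spaces are $\scD_2$-algebras, so the group-completion theorem \cite{SegalMcDuff} applies to each, and it suffices to check that $(f^\infty_n,\eta^\infty_n)$ induces an isomorphism on $H_*(-;\Z)$.

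The core of the argument is a ``scanning in the fibre direction'' computation. I would fix a colour $n$ and compare the homology of $O^\infty_n(\bmX)$ with that of $\scO^\infty\binom{}{n}\times P_n(\bmX)$ by a spectral-sequence or filtration argument over the arity: the coend $O^\delta_n(\bmX)=\int^{K\in\bmG\wr\Inj}\scO^\delta\binom{K}{n}\times\bmX(K)$ is built from the functors $\scO^\delta\binom{-}{n}$ on $(\bmG\wr\Inj)^{\op}$, and after stabilisation the homological-stability hypothesis says precisely that the stable input-blocking $\beta^K_n\colon\scO^\infty\binom{K}{n}\to\scO^\infty\binom{}{n}$ is a homology equivalence for every profile $K$. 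Thus in homology the stable operad ``sees'' only its nullary part together with the combinatorial data of how inputs are distributed; the $\bmG\wr\Inj$-coend against $\bmX$ then assembles, in homology, into $H_*(\scO^\infty\binom{}{n})\otimes H_*\big(\bigvee_k X_k\sslash G_k\big)$-type contributions — which is exactly the homology of $\scO^\infty\binom{}{n}\times P_n(\bmX)$, since by \tref{Proposition}{prop:freedeg} and the bar-construction model $P_n(\bmX)=F^{\scP}_\scG(\bmX)_n$ has the homology of the free $E_\infty$-space (equivalently $\Omega^\infty\Sigma^\infty$) on $\bigvee_k X_k\sslash G_k$. The map $f^\infty_n$ records the $\scO^\infty\binom{}{n}$-factor and $\eta^\infty_n$ records the $P_n(\bmX)$-factor, and one checks that the product map is exactly the comparison one wants. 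The hypothesis $\abs{\gamma}=0$ for $\gamma\in G_n$ is what guarantees that the $\bmG$-action does not interfere with the degree filtration, so that the telescope and the decomposition are compatible with the group actions.

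The main obstacle I expect is the bookkeeping needed to identify the $E^2$-page (or the associated graded) of the filtration with the homology of the product: one must show that the homological triviality of the stable blocking maps, combined with the equivariant well-basedness of $\bmX$ and the contractibility built into $\scP=\scD_\infty\otimes E\bmG$, forces the relevant Tor-terms over the various $G_k$ and over $\frS_r$ to collapse in the expected way — this is where the genuine work of \cite{BasterraEtAl} lies, now carried out colourwise and $\bmG$-equivariantly. A secondary technical point is checking that $\eta^{e_n}_n(\tilde x_n)$ is an $H$-unit for the $\scD_2$-algebra $P_n(\bmX)$, so that the telescope map $\eta^\infty_n$ is well-defined and the triangle in the statement commutes up to homotopy; this follows because $\scP\binom{}{n}=\{\bZn\}$ is a point, but it needs to be spelled out before the group-completion theorem can be invoked to pass from the homology equivalence back to a statement about $O^\infty_n(\bmX)$. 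Once the homology equivalence is established, the Lemma follows immediately.
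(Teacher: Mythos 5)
Your outline follows the same overall strategy as the paper's proof: filter $O^\infty_n(\bmX)$ and the target by arity, identify the filtration quotients as wedges/coends over $\bmG[K]$, and reduce the comparison to the homological stability hypothesis asserting that the stable input-blocking maps $\beta^K_n\colon\scO^\infty\binom{K}{n}\to\scO^\infty\binom{}{n}$ are homology equivalences, combined with the fact that $\scP=\scD_\infty\otimes E\bmG$ has contractible operation spaces. You also correctly flag the role of equivariant well-basedness of $\bmX$ (needed to make the filtration quotients genuine cofibre sequences) and of the degree hypothesis $\abs{\gamma}=0$. However, the passage where you say that ``one must show that the homological triviality of the stable blocking maps [\dots] forces the relevant Tor-terms over the various $G_k$ and over $\frS_r$ to collapse --- this is where the genuine work of \cite{BasterraEtAl} lies'' is precisely \tref{Lemma}{lem:prodheq} (the coloured analogue of \cite[Lem.\,5.2]{BasterraEtAl}), and you do not actually establish it: there is no argument given for why the coend $\smash{\int^{K'\in\bmG[K]}\scO^\infty\binom{K'}{n}\times\bm{Q}(K')}$ has the homology of $\smash{\scO^\infty\binom{}{n}\times\scD_\infty(r)\times_{\frS_r}\coprod_{K'}\bm{Q}(K')\sslash\bmG(K')}$. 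The paper's proof of this uses a chain of fibre sequences (quotienting first by the free $\bmG(K')$-action, then by the free $\frS_r$-action), a $3\times3$ diagram of fibrations, the five lemma, and a spectral-sequence comparison argument at the point where $\beta^K_n$ enters. None of this is sketched in your proposal, so what you have is a correct plan with the central technical step deferred, not a proof.

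A secondary issue: the appeal to the group-completion theorem at the start of your argument is misplaced and logically vacuous here. The Key Lemma is a statement purely about singular homology of mapping telescopes, and saying ``it suffices to check that $(f^\infty_n,\eta^\infty_n)$ induces an isomorphism on $H_*$'' is a tautology --- that is exactly what the Lemma asserts. The group-completion theorem \cite{SegalMcDuff} only enters afterwards, in the proof of the Splitting Theorem (\tref{Theorem}{thm:mainOHS}), where one passes from the homology equivalence of telescopes to a weak equivalence of loop spaces after localising $H_*(O_n(\bmX))$ at $\pi_0$. Keeping these two steps cleanly separated is important, since conflating them makes the logic look circular.
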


Let us first prove \iref{Theorem}{theo:splitOHS} using the \iref{Key
  Lemma}{lem:lasthequiv}.

\begin{proof}[Proof of \mbox{Theorem \ref{theo:splitOHS}}]
  Let us abbreviate $\tbO\coloneqq \tF^\scO_{\scB\otimes \bfI}$ and
  $\tbD\coloneqq \tF^{\scD_\infty\otimes EN}_{\scB\otimes \bfI}$. Then we have
  to show that the map $(f,\eta)\colon\tbO(\bmX)\to \tbO(*)\times\tbD(\bmX)$
  from the previous section is levelwise an equivalence.

  To this aim, we study the stabilisations for $\tbO$: as before, restricting
  the operation spaces of $\scO$ gives rise to a grading of each level
  $B_\bullet(\bbO,\bbI,\bmX)_n$, and we denote the components by
  $B_\bullet(\bbO,\bbI,\bmX)^\delta_n$ and their realisation by
  $\tbO(\bmX)_n^\delta$.  Adding the propagator gives rise to maps
  $B_\bullet(\bbO,\bbI,\bmX)^\delta_n\to B_\bullet(\bbO,\bbI,\bmX)^{\delta\Ydown
    e_n}_n$ of simplicial spaces and thus also to maps
  $\tbO(\bmX)_n^\delta\to \tbO(\bmX)_n^{\delta\Ydown e_n}$.  We denote the
  colimit by $B_\bullet(\bbO,\bbI,\bmX)^\infty_n$ resp.\ $\tbO(\bmX)_n^\infty$;
  then clearly $\tbO(\bmX)_n^\infty\simeq |B_\bullet(\bbO,\bbI,\bmX)^\infty_n|$.

  Again, we obtain a product map
  $(f_n^\infty,\eta^\infty_n)\colon\tbO(\bmX)^\infty_n\to
  \tbO(*)^\infty_n\times\tbD(\bmX)_n$, and we claim that it is a homology
  equivalence: since we have already seen that the involved simplicial spaces
  are proper, we can invoke the spectral sequence for the geometric realisation
  from \cite[Prop.\,A1]{SegalCoh}, whence it is enough to see that for each
  dimension $p\ge 0$ and each colour $n\in N$, the map of $p$-simplices
  \mbox{$B_p(\bbO,\bbI,\bmX)^\infty_n\to B_p(\bbO,\bbI,*)^\infty_n\times
    B_p(\bbD,\bbI,\bmX)_n$} is a homology equivalence. As we have
  $B_p(\bbO,\bbI,*)^\infty_n=\bbO(*)^\infty_n$, the map in question is exactly
  the map from the \iref{Key Lemma}{lem:lasthequiv} for the sequence
  $\bbI^p\bmX$. This shows the subclaim.\looseness-1

  The rest of the proof is a combinatorially enhanced variation of the first
  part of the proof of \cite[Thm.\,5.4]{BasterraEtAl} that uses the classical
  group completion theorem: let us denote by
  $\bm{e}_n\in \pi_0(\tbO_n(*)\times \tbD_n(\bmX))$ the component of the
  0-simplex $(\tilde{e}_n,[\new{\frv};\emptyset])$, i.e.\ the propagator and the
  unit, and by \mbox{$\bm{x}_n\in \pi_0(\tbO_n(\bmX))$} the component of the
  0-simplex $\tilde{x}_n$.  By a classical telescope argument, the subclaim
  implies that the map
  \[H_\bullet(f_n,\eta_n)\colon  H_\bullet(\tbO(\bm{X})_n)[\bm{x}_n^{-1}]
    \to H_\bullet(\tbO(*)_n\times \tbD(\bm{X})_n)[\bm{e}_n^{-1}]\]
  that is induced by the map of Pontrjagin rings is an isomorphism.
  
  Now recall that all input blocking maps
  $\beta^K_n\colon \scO\binom{K}{n}\to \pi_0(\scO\binom{}{n})$ are assumed to be
  surjective. Then
  \mbox{$(f_n,\eta_n)_*\colon \pi_0(\tbO(\bmX)_n)\to \pi_0(\tbO(*)_n\times
    \tbD(\bmX)_n)$} is sur\-jective as well, so under the above map, the
  multiplicative submonoid $\pi_0(\tbO(\bmX)_n)$ is sent surjectively onto the
  the submonoid \mbox{$\pi_0(\tbO(*)_n\times \bbD(\bmX)_n)$.} Therefore, we can
  localise further, with respect to the multiplicative submonoids of \emph{all}
  path components on both sides, still obtaining an isomorphism. We get a
  diagram
  \[\begin{tikzcd}[row sep=1.5em,column sep=7em]
      H_\bullet(\tbO(\bmX)_n)[\bm{x}_n^{-1}]\ar{r}{H_\bullet(f_n,\eta_n)}[swap]{\cong}\ar[d]
      & H_\bullet(\tbO(*)_n\times \tbD(\bmX)_n)[\bm{e}_n^{-1}]\ar[d]\\
      H_\bullet(\tbO(\bmX)_n)[\pi_0^{-1}]\ar{r}{H_\bullet(f_n,\eta_n)}[swap]{\cong}\ar[d,equal]
      & H_\bullet(\tbO(*)_n\times \tbD(\bmX)_n)[\pi_0^{-1}]\ar[d,equal]\\
      H_\bullet(\Omega B \tbO(\bmX)_n)\ar{r}{H_\bullet(\Omega B(f_n,\eta_n))}[swap]{\cong}
      & H_\bullet(\Omega B\tbO(*)_n\times\Omega B \tbD(\bmX)_n),
    \end{tikzcd}\]
  where the vertical isomorphisms between the second and the third row follow
  from the group completion theorem \cite{SegalMcDuff} for $\scD_1$-algebras.
  This shows that $\Omega B(f_n,\eta_n)$ is a homology equivalence of loop
  spaces and thus a weak equivalence.
\end{proof}

The pending proof of \iref{Key Lemma}{lem:lasthequiv} requires some further
preparation. Recall \new{from \iref{Definition}{defi:equigroth}} that for each
tuple $K$, we denote by $N[K]\subseteq N\wr\Inj$ the full sub\-group\-oid
spanned by all objects of the form $\tau^*K$.  For an input profile $K$ and an
output $n\in N$, recall the stable operation spaces $\scO^\infty\binom{K}{n}$.
Since input permutation and precomposition commutes with stabilisation, these
spaces assemble, for each $n\in N$ and each tuple $K$, into a functor
$\scO\binom{-}{n}{}^\infty\colon N[K]^\op\to \mathbf{Top}$.

Now let $\bm{Q}\colon N[K]\to \mathbf{Top}$ be any functor.  Again, we have two
maps: first, for each tuple $L=\tau^*K$ and each $n\in N$, we have the stable
input block map
$\scO\binom{L}{n}{}^\infty\times {\bm{Q}}(L)\to \scO\binom{}{n}{}^\infty$ which
ignores the factor ${\bm{Q}}(L)$. These maps define a natural transformation of
functors from $\scO\binom{-}{n}{}^\infty\times \bm{Q}(-)$ to the constant
functor $N[K]^\op\times N[K]\to \mathbf{Top}$ with value
$\scO\binom{}{n}^\infty$, so we get
\[\alpha_1\colon \int^{L}\scO\binom{L}{n}^\infty\times{\bm{Q}}(L)\to \scO\binom{}{n}^\infty.\]

Second, the morphism $\pi\colon \scO\to \scD_\infty\otimes EN$ gives, for each
$n\in N$, rise to a natural transformation
$\scO\binom{-}{n}^\infty\Rightarrow (\scD_\infty\otimes EN)\binom{-}{n}$ of
functors $N[K]^\op\to\Top$, so we obtain a morphism, where $L$ ranges in $N[K]$,
\[\alpha_2\colon \int^{L}\scO\binom{L}{n}^\infty\times{\bm{Q}}(L)
  \to \int^{L}(\scD_\infty\otimes EN)\binom{L}{n}\times{\bm{Q}}(L)
  \cong \scD_\infty\times_{\frS_r}\coprod_{L=\tau^* K}\bm{Q}(L).\]
The following lemma is a coloured version of \cite[Lem.\,5.2]{BasterraEtAl}.

\begin{lem}\label{lem:prodheq}
  The product map $\alpha_{\bm{Q}}\coloneqq (\alpha_1,\alpha_2)$ is a homology equivalence:
  \[\alpha_{\bm{Q}}\colon \int^{L\in N[K]}\scO\binom{L}{n}^\infty\times {\bm{Q}}(L)
    \to \scO\binom{}{n}^\infty\times \scD_\infty(r)\times_{\frS_r}
    \coprod_{L\in N[K]} {\bm{Q}}(L).\]
\end{lem}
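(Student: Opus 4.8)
The strategy is to reduce to the monochromatic, non-relative statement \cite[Lem.\,5.2]{BasterraEtAl} by a filtration-and-colimit argument, exactly as one does when upgrading homological-stability splittings to families. First I would observe that both sides of $\alpha_{\bm{Q}}$ are natural in the enriched functor $\bm{Q}\colon \bmG[K]\to\Top$, so by a standard skeletal/cell-by-cell induction it suffices to treat the case where $\bm{Q}$ is a free functor, i.e.\ of the form $\bm{Q}=\bmG[K]\binom{K'_0}{-}\times A$ for a fixed object $K'_0=\tau_0^*K$ and a space $A$ with trivial action; here one uses that homology commutes with the relevant homotopy colimits and that both sides send homotopy pushouts of functors to homotopy pushouts (the coend on the left and the bar-construction homotopy quotients on the right are both homotopy-invariant since everything in sight is well-based, using the equivariant well-basedness hypothesis on $\bm{X}$ inherited here). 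For such a free $\bm{Q}$, the ninja-Yoneda lemma collapses the coend on the left to $\scO^\infty\binom{K'_0}{n}\times A$, and the right-hand side collapses (via \tref{Lemma}{lem:cocalc}) to $\scO^\infty\binom{}{n}\times \scD_\infty(r)\times_{\frS_r}\big(\bmG[K]\binom{K'_0}{K}\big/{\sim}\big)\times A$, which after unwinding the $EG$-factors is $\scO^\infty\binom{}{n}\times (\scD_\infty(r)/\mathrm{Stab}(\tau_0))\times A$.

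Second, with $\bm{Q}$ free I would identify the map $\alpha_{\bm{Q}}$ with (a version of) the Basterra–Ebert–et-al.\ map. The key point is that after fixing the colour tuple $K'_0$, the operad $\scO$ restricted to this input profile behaves like a monochromatic operad with homological stability: the stable input-blocking $\scO^\infty\binom{K'_0}{n}\to\scO^\infty\binom{}{n}$ is a homology equivalence by the defining hypothesis that $\scO$ is an operad with homological stability, and the comparison map $\pi\colon\scO\to\scP=\scD_\infty\otimes E\bm{G}$ restricts on this profile to a map whose target has contractible $\scD_\infty$-part and whose $E\bm{G}$-part is a product of copies of $EG$, hence weakly contractible. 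Thus the two maps $\alpha_1,\alpha_2$ become, after forgetting the contractible decorations, literally the two maps "input-block everything'' and "compare to $\scD_\infty$'' from \cite[Lem.\,5.2]{BasterraEtAl}, and their product being a homology equivalence is precisely that lemma. One has to be slightly careful that the $\frS_r$-equivariance is carried along correctly — the $EG$-decorations are permuted together with the $\scD_\infty$-configuration — but this is exactly the content of writing the right-hand side as a homotopy quotient by $\frS_r$ acting diagonally, and the \tref{Lemma}{lem:cocalc} identification is designed to make this bookkeeping automatic.

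Third, to get back from free $\bm{Q}$ to arbitrary $\bm{Q}$, I would use that every enriched functor $\bmG[K]\to\Top$ is, up to objectwise weak equivalence, built from free functors by (homotopy) colimits along cofibrations — a bar-resolution of $\bm{Q}$ against the representables suffices. Both functors $\bm{Q}\mapsto \int^{K'}\scO^\infty\binom{K'}{n}\times\bm{Q}(K')$ and $\bm{Q}\mapsto \scD_\infty(r)\times_{\frS_r}\coprod_{K'}\bm{Q}(K')\qq\bmG(K')$ preserve these homotopy colimits (the coend, the induced-space construction, and the homotopy quotients are all homotopy-colimit-preserving and homotopy-invariant), and $\scO^\infty\binom{}{n}$ is just a constant factor, so a $\mathbb{Z}$-homology spectral-sequence (or Mayer–Vietoris induction over the skeleta of the bar resolution) comparison argument finishes the proof once the free case is known.

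The main obstacle I expect is the careful verification that $\alpha_{\bm{Q}}$ really is, in the free case, the Basterra–Ebert et al.\ map rather than something merely resembling it — in particular, checking that the stabilisation used to define $\scO^\infty\binom{K'_0}{n}$ (multiplying by the propagator $\tilde e_n$ via $\Yn$) is compatible, on the nose or up to coherent homotopy, with the stabilisation on $\scO^\infty\binom{}{n}$ under the input-blocking map, and that the hypothesis $|\gamma|=0$ for $\gamma\in G_n$ (used via \tref{Theorem}{thm:mainOHS}) guarantees the $\bmG$-action does not interfere with the degree filtration. This is essentially diagram-chasing through \tref{Construction}{constr:stabopspace} and \tref{Definition}{defi:equigroth}, but it is where the coloured/relative generalisation genuinely has to be checked rather than quoted.
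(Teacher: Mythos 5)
The paper's own proof of this lemma is quite different from yours: it sets up a fibre sequence
\[
\scO^\infty\binom{K}{n}\;\to\;\int^{K'}\scO^\infty\binom{K'}{n}\times\bm{Q}(K')\;\to\;\scD_\infty(r)\times_{\frS_r}\coprod_{K'}\bm{Q}(K')\sslash\bm{G}(K')
\]
directly, compares it via the five lemma with the analogous sequence for the middle term $\int^{K'}\bigl(\scO^\infty\binom{K'}{n}\times\scP\binom{K'}{n}\bigr)\times\bm{Q}(K')$ (here the $EG$-factor in $\scP$ supplies the free $\bm{G}(K')$-action), and then compares the middle and bottom rows by the Zeeman spectral-sequence comparison theorem applied to the Serre spectral sequences, invoking homological stability only once, through the stable input-blocking map $\beta^K_n$ between the fibres. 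Your proposal instead reduces to the free/representable case of $\bm{Q}$ by a bar resolution and a homotopy-colimit argument. That is a genuinely different route, but as written it has two gaps.

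First, the homotopy-invariance claim in your second and third steps is not justified, and in fact is false without an extra hypothesis. The left-hand side $\int^{K'\in\bm{G}[K]}\scO^\infty\binom{K'}{n}\times\bm{Q}(K')$ is a \emph{strict} coend; since $\bm{G}[K]$ is a connected groupoid with automorphism group $\bm{G}_K\coloneqq\Aut_{\bm{G}[K]}(K)$, it is the strict balanced product $\scO^\infty\binom{K}{n}\times_{\bm{G}_K}\bm{Q}(K)$. This functor preserves homotopy colimits in $\bm{Q}$ and takes objectwise weak equivalences to weak equivalences \emph{only if} $\scO^\infty\binom{K}{n}$ is a cofibrant (e.g. free) $\bm{G}_K$-space. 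That freeness is not a hypothesis of the lemma — it is secured in the paper only after the replacement $\scO\rightsquigarrow\scO\times\scP$ (see \tref{Lemma}{lem:wlog}), where the $\scP\binom{K}{n}\simeq(EG)^r$ factor makes the action free. Without naming this hypothesis explicitly, your step ``both sides send homotopy pushouts of functors to homotopy pushouts'' and the bar-resolution replacement of $\bm{Q}$ by a weakly equivalent free diagram simply don't go through. (The right-hand side is fine: the homotopy quotient $\bm{Q}(K')\sslash\bm{G}(K')$ and the balanced product with $\scD_\infty(r)\simeq E\frS_r$ are homotopy invariant by construction.)

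Second, your explicit computation in the free case $\bm{Q}=\bm{G}[K]\binom{K'_0}{-}\times A$ is off. On the right, one must take the disjoint union over all $K'\in\bm{G}[K]$, not just $K'=K$. Since $\bm{G}(K')$ acts freely on $\bm{G}[K]\binom{K'_0}{K'}=\bm{G}(K'_0)\times\bSig\binom{K'_0}{K'}$ with quotient $\bSig\binom{K'_0}{K'}$, one has
\[
\coprod_{K'\in\bm{G}[K]}\bm{Q}(K')\sslash\bm{G}(K')\;\simeq\;\Bigl(\coprod_{K'}\bSig\binom{K'_0}{K'}\Bigr)\times A\;\cong\;\frS_r\times A
\]
as $\frS_r$-spaces (the left set is free and transitive under $\frS_r$), so $\scD_\infty(r)\times_{\frS_r}\bigl(\cdots\bigr)\simeq A$ with no residual $B\mathrm{Stab}(\tau_0)$ factor. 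The free case therefore reduces to the stable input-blocking $\scO^\infty\binom{K'_0}{n}\to\scO^\infty\binom{}{n}$ being a homology equivalence, which is exactly homological stability — so the conclusion is right, but not via the intermediate space you wrote. Once you state the freeness/cofibrancy hypothesis explicitly and correct this bookkeeping, your reduction-to-representables argument should close, and it would be a legitimate alternative to the spectral-sequence comparison in the paper.
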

\begin{proof}
  Since $\scD_\infty(r)$ is contractible, the sequence
  \[\begin{tikzcd}
      \scO\binom{L}{n}^\infty\ar[r] &
      \scO\binom{L}{n}^\infty\times{\bm{Q}}(L)\ar{r}{\pi\times\on{id}\hspace*{1px}} &
      \scD_\infty(r)\times\bm{Q}(L)
    \end{tikzcd}\]
  induces a split long exact sequence of homotopy groups for each $L=\tau^*K$
  and each choice of basepoint.  If we take for the total space and the base
  space the disjoint union over all such $L$, the common fibre for each
  component is $\scO\binom{L}{n}{}^\infty\cong \scO\binom{K}{n}{}^\infty$. If we
  moreover quotient by the free and compatible $\frS_r$-actions on total space
  and base space, we finally obtain a long exact sequence of homotopy groups
  that is induced by
  \[\scO\binom{K}{n}^\infty
    \to \int^{L}\scO\binom{L}{n}^\infty\times{\bm{Q}}(L)
    \to \scD_\infty(r)\times_{\frS_r}\coprod_{L=\tau^*K}{\bm{Q}}(L).\]
  Now the product map $\alpha_{\bm{Q}}$ is the composition of the two middle
  vertical maps in the following $(3\times 3)$-diagram, abbreviating
  $\frS\coloneqq\frS_r$,
  \[\begin{tikzcd}[column sep=.88em,row sep=2.8em]
      \scO\binom{K}{n}^\infty\ar[d,equal]\ar[r] &
      \int^{L}\scO\binom{L}{n}^\infty\times{\bm{Q}}(L)\ar[r]
      \ar{d}{\int^{L}(\on{id},\pi)\times\on{id}} &
      \scD_\infty(r)\times_{\frS_r}\coprod_{L}{\bm{Q}}(L)\ar[d,equal]\\
      \scO\binom{K}{n}^\infty\ar[r]\ar[d,swap,"\beta^K_n"] &
      \int^{L}\pa{\scO\binom{L}{n}^\infty\times (\scD_\infty\otimes EN)\binom{L}{n}}\times{\bm{Q}}(L)\ar[r]
      \ar{d}{\int^{L}(\beta^{L}_n\times\on{id})\times\on{id}} &
      \scD_\infty(r)\times_{\frS_r}\coprod_{L}{\bm{Q}}(L)\ar[d,equal]\\
      \scO\binom{}{n}^\infty\ar[r] &
      \scO\binom{}{n}^\infty\times\scD_\infty(r)\times_{\frS_r}\coprod_{L}{\bm{Q}}(L)\ar[r] &
      \scD_\infty(r)\times_{\frS_r} \coprod_{L}{\bm{Q}}(L).
    \end{tikzcd}\]
  Here the top-left square commutes up to homotopy and all other squares commute
  strictly.  We have already seen that the top row induces a long exact sequence
  on homotopy groups, and the second row is clearly a fibration. By the 5-lemma,
  the first middle vertical map is a weak equivalence. Similarly, we know that
  both, the second and the third row, are fibrations, so we obtain a morphism
  between the associated Serre spectral sequences in homology. Since $\scO$ is
  an operad with homological stability, the map $\beta^K_n$ between the fibres
  is a homology equivalence, so by a standard comparison argument
  \cite[{}5.2.12]{weibel}, the second middle vertical map is a homology
  equivalence as well.
\end{proof}

Now we have everything together to prove the \iref{Key Lemma}{lem:lasthequiv}.

\begin{proof}[Proof of the \mbox{Key Lemma \ref{lem:lasthequiv}}]
  Recall that, after identifying the stable spaces $\new{\bbO(*)^\infty_n}$ with
  $\scO\binom{}{n}^\infty$, our aim is to show that the map
  \[q\coloneqq (f^\infty_n,\eta_n^\infty)\colon \bbO(\bmX)^\infty_n\
    to \scO\binom{}{n}^\infty\times \bbD(\bmX)_n\]
  induces isomorphisms on homology. If we denote by $(N\wr\Inj)_{\le r}$ the
  full subcategory of $N\wr\Inj$ whose objects are tuples of length at most $r$,
  then the two sides of $q$ are exhaustively filtered by
  $F^{\phantom\prime}_{-1}=F'_{-1}=\emptyset$ and
  \begin{align*}
    F_r &\coloneqq \int^{K\in (N\swr\Inj)_{\le r}}\scO\binom{K}{n}^\infty\times\bmX(K)\\
    \text{resp.}\quad F'_r&\coloneqq \scO\binom{}{n}^\infty\times \int^{K\in (N\swr\Inj)_{\le r}}(\scD_\infty\otimes EN)\binom{K}{n}\times \bm{X}(K),
  \end{align*}
  and the map $q$ is filtration-preserving. Let us fix a system
  $S_r\subseteq N^r$ of representatives for unordered tuples and set
  ${\bm{Q}}(K) \coloneqq X_{k_1}\wedge\dotsb\wedge X_{k_r}$.  Then the
  filtration quotients are of the form
  \begin{align*}
    F_r/F_{r-1} &\cong\bigvee_{K\in S_r}\int^{L\in N[K]}\scO\binom{L}{n}^\infty_+\wedge {\bm{Q}}(L)\\
    F'_r/F'_{r-1} &\cong \bigvee_{K\in S_r}\scO\binom{}{n}^\infty_+\wedge
                    \scD_\infty(r)_+\wedge_{\frS_r}\bigvee_{L=\tau^*K}{\bm{Q}}(L),
  \end{align*}
  and the map $q_r\colon F_r/F_{r-1}\to F'_r/F'_{r-1}$ between the filtration
  quotients splits as a bouquet $q_r=\bigvee_{K\in S_r}q_K$. We show that each
  $q_K$ is a homology equivalence; then it follows that also the map $q_r$
  between the filtration quotients is a homo\-logy equivalence, so by applying a
  comparison argument \new{\cite[{}5.2.12]{weibel}} to the morphism of spectral
  sequences assigned to the filtration-preserving map $q$, we get that $q$
  itself is a homology equivalence.

  In order to see that each $q_K$ is indeed a homology equivalence, we use that
  $\bmX$ is well-based and obtain that the induced maps
  \begin{align*}
    \int^{L\in N[K]}\scO\binom{L}{n}^\infty
    &\to \int^{L\in N[K]}\scO\binom{L}{n}^\infty\times{\bm{Q}}(L)\\[3px]
    \text{and}\quad\scD_\infty(r)\times_{\frS_r}[K]
    &\to \scD_\infty(r)\times_{\frS_r}\coprod_{L=\tau^*K}{\bm{Q}}(L)
  \end{align*}
  are cofibrations, where we write $[K]\coloneqq \{\tau^*K;\,\tau\in \frS_r\}$.
  If we write $\alpha_{\bm{Q}}$ for the product map from
  \iref{Lemma}{lem:prodheq} and $\alpha_0$ for the analogous one for the trivial
  family $*=(*)_{n\in N}$, then we obtain a morphism of cofibre sequences
  (written vertically for space reasons)
  \[\begin{tikzcd}
      \int^{L}\scO\binom{L}{n}^\infty\ar[d]\ar[r,"\alpha_0"]
      & \scO\binom{}{n}^\infty\times\scD_\infty(r)\times_{\frS_r}[K]\ar[d]\\
      \int^{L}\scO\binom{L}{n}^\infty\times{\bm{Q}}(L)\ar[r,"\alpha_{\bm{Q}}"]\ar[d]
      & \scO\binom{}{n}^\infty\times \scD_\infty(r)\times_{\frS_r}\coprod_{L}{\bm{Q}}(L)\ar[d]\\
      \int^{L}\scO\binom{L}{n}^\infty_+\wedge{\bm{Q}}(L)\ar[r,"q_K"]
      & \scO\binom{}{n}^\infty_+\wedge\scD_\infty(r)_+\wedge_{\frS_r}\bigvee\!\!_{L}{\bm{Q}}(L),
    \end{tikzcd}\]
  where $L$ ranges in $N[K]$. By \iref{Lemma}{lem:prodheq}, $\alpha_0$ and
  $\alpha_{\bm{Q}}$ induce isomorphisms in homology, so by the 5-lemma applied
  to the long exact sequence associated to the cofibre sequence, we obtain that
  also $q_K$ is a homology equivalence.
\end{proof}

\section{\texorpdfstring{$\Lambda\frM_{*,1}$}{ΛM⁎₁} as a relatively free algebra}
\label{sec:LM1asFreeAlg}
In this section we combine the insights from the previous sections: we translate
the results of \iref{Section}{sec:arcSystems} and
\iref{Section}{sec:centralisers}, which are expressed in terms of groups, into
the analogous results in terms of classifying spaces.

This will lead to \iref{Theorem}{theo:isfreealg}, expressing $\Lambda\frM_{*,1}$
as the colour-1 part of a relatively free $\scM$-algebra, with generators an
algebra over the family of twisted tori, which depends on $\del$-irreducible
mapping classes. Using the results of \iref{Section}{sec:colouredOHS} and
\iref{Section}{sec:infiniteLSFromOHS}, we deduce the identification from
\iref{Theorem}{eq:a}.

\subsection{Recollections from \iref{Section}{sec:arcSystems} and
  \iref{Section}{sec:colouredOHS}}

Recall \iref{Definition}{defi:fixedarccomplex} and
\iref{Definition}{defi:cutlocus}, let $\caS$ be a surface of type $\Sigma_{g,n}$
for some $g\ge0$ and $n\ge1$, and note that a mapping class
$\phi\in\Gamma(\caS,\del\caS)$ is $\del$-irreducible if and only if the cut
locus of $\phi$ is equal to $[\del\caS]$, i.e.\ it is the collection of oriented
isotopy classes of all boundary curves of $\caS$.

\noindent Note in particular the following special cases:
\begin{itemize}
\item if $\caS$ is a cylinder, $[\del\caS]$ contains two isotopy classes, as the
  two curves of $\del\caS$ are not isotopic as oriented curves; the mapping
  class $\one\in\Gamma(\caS,\del\caS)$ has empty cut locus, and every other
  mapping class is $\del$-irreducible and has cut locus equal to $[\del \caS]$;
\item if $\caS$ is a disc, $[\del\caS]$ contains one isotopy class (of a
  null-homotopic curve); the unique mapping class $\one\in\Gamma(\caS,\del\caS)$
  has empty cut locus and was declared not to be $\del$-irreducible.
\end{itemize}
Alternatively, we saw that $\phi$ is $\del$-irreducible if and only if the white
part $W\subset\caS$ deformation retracts onto $\del^{\text{out}}W=\del\caS$.
Being $\del$-irreducible is an invariant of conjugacy classes in
$\Gamma(\caS,\del\caS)$, and in fact even of conjugacy classes of $\Gamma(\caS)$
that are contained in the normal subgroup $\Gamma(\caS,\del\caS)$.

Recall \iref{Construction}{constr:Rnembedding}.  We have a similar action of
$R_n=T^n\rtimes \frS_n$ on the space $\scM\binom{}{n}=\bfM_\del\binom{0}{n}$ by
postcomposition, regarding $R_n\subset\bfM_\del\binom{n}{n}$.  By taking this
action pointwise over $S^1$, we obtain an action of $R_n$ on
$\Lambda\scM\binom{}{n}$.

\begin{nota}
  For $g\ge0$ and $n\ge1$, we denote by
  $\scM_{g,n}\subset \scM\binom{}{n}=\bfM_\del\binom{0}{n}$ the subspace
  corresponding to conformal classes $(W,\tilde\theta)$ with $W$ of type
  $\Sigma_{g,n}$.
\end{nota}
We have a homotopy equivalence $\scM_{g,n}\to\frM_{g,n}$ given by sending
$(W,\tilde\theta)$ to $(W,\theta)$, where $\theta$ is obtained by restricting
$\tilde\theta$ to (the preimage of) $\del W$.  We have therefore a homotopy
equivalence
\[\Lambda\scM_{g,n}\simeq \Lambda\frM_{g,n}\simeq
  \coprod_{[\phi]\in \on{Conj}(\Gamma_{g,n})}BZ(\phi,\Gamma_{g,n}).\]
In the following we will mostly replace the spaces $\Lambda\frM_{g,n}$ by the
spaces $\Lambda\scM_{g,n}$. In particular we denote
\[\Lambda\scM_{*,1}\coloneqq\coprod_{g\ge0}\Lambda\scM_{g,1}\simeq\Lambda\frM_{*,1},\]
where the last equivalence is an equivalence of $\scD_2$-algebras.

\subsection{Action of \texorpdfstring{$\frS_n$}{Sₙ} on
  \texorpdfstring{$\Lambda\scM_{g,n}$}{ΛM⁎ₙ}}

In this subsection, we analyse the action of $\frS_n$ on the set
of components of $\Lambda\scM_{g,n}$ and classify the orbits of this action.

\begin{nota}
  \label{nota:LMphi}
  For a mapping class $\phi\in\Gamma_{g,n}$ we denote by
  $\Lambda\frM_{g,n}(\phi)$ the connected component of $\Lambda\frM_{g,n}$
  corresponding to $BZ(\phi,\Gamma_{g,n})$.  It contains all free loops
  $\lambda\colon S^1\to\frM_{g,n}$ with the following property: if $\caS$ is a
  Riemann surface of type $\Sigma_{g,n}$ representing $\lambda(1)$ and if
  $\phi'\in\Gamma(\caS,\del\caS)$ is the monodromy of $\lambda$, then there is a
  diffeomorphism $\Xi\colon\caS\to\Sigma_{g,n}$ preserving the parametrisation
  \emph{and the ordering} of the boundary components, such that
  $(\phi')^\Xi=\phi\in\Gamma_{g,n}$.  We denote by $\Lambda\scM_{g,n}(\phi)$ the
  corresponding component of $\Lambda\scM_{g,n}$.
\end{nota}

Note that $\Lambda\scM_{g,n}(\phi)\subset\Lambda\scM_{g,n}$ is invariant under
the action of $T^n\subset R_n$, but not necessarily under the action of
$\frS_n$. We denote by $\fS_n\cdot\Lambda\scM_{g,n}(\phi)\subset\Lambda\scM_{g,n}$
the orbit of $\scM_{g,n}(\phi)$ under the action of $\fS_n$ or, equivalently,
under the action of $R_n$. We have
\[\fS_n\cdot\Lambda\scM_{g,n}(\phi)=\bigcup_{\phi'}\Lambda\scM_{g,n}(\phi'),\]
where $\phi'$ ranges over all conjugates of $\phi$ in the extended mapping
class group $\Gamma_{g,(n)}$. Note that these conjugates still lie in the
subgroup $\Gamma_{g,n}\subset\Gamma_{g,(n)}$, which is normal. Note also that
$\Lambda\scM_{g,n}(\phi')=\Lambda\scM_{g,n}(\phi)$ if and only if $\phi$ and
$\phi'$ are conjugate not only in $\Gamma_{g,(n)}$, but also in $\Gamma_{g,n}$;
see \tref{Figure}{fig:conj} for an example that shows the difference.

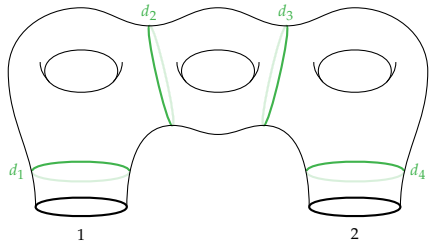
\begin{figure}[b]
  \centering
  \begin{tikzpicture}[scale=1.2]
    \draw[dgreen,thick,looseness=.35] (-.04,.39) to[out=90,in=90] (1.03,.39)
    to[out=-90,in=-90] (-.04,.39);
    \draw[dgreen,thick,looseness=.35] (2.97,.39) to[out=90,in=90] (4.04,.39)
    to[out=-90,in=-90] (2.97,.39);
    \draw[dgreen,thick,looseness=.15] (1.5,.9) to[out=190,in=190] (1.25,2)
    to[out=10,in=10] (1.5,.9);
    \draw[dgreen,thick,looseness=.15] (2.5,.9) to[out=170,in=170] (2.75,2)
    to[out=-10,in=-10] (2.5,.9);
    \draw[white,very thick,opacity=.8,looseness=.15] (1.25,2) to[out=10,in=10] (1.5,.9);
    \draw[white,very thick,opacity=.8,looseness=.15] (2.5,.9) to[out=170,in=170] (2.75,2);
    \fill[white,opacity=.8] (-.2,.39) rectangle (4.3,.2);
    \draw[looseness=.35,thick] (0,0) to[out=-90,in=-90] ++(1,0) to[out=90,in=90] (0,0);
    \draw[looseness=.35,thick] (3,0) to[out=-90,in=-90] ++(1,0) to[out=90,in=90] (3,0);
    \node at (.5,-.3) {\tiny $1$};
    \node at (3.5,-.3) {\tiny $2$};
    \draw (0,0) to[out=90,in=-90] (-.3,1.5) to[out=90,in=180] (.5,2.2) to[out=0,in=180]
    (1.25,2) to[out=0,in=180] (2,2.2) to[out=0,in=180] (2.75,2) to[out=0,in=180] (3.5,2.2)
    to[out=0,in=90] (4.3,1.5) to[out=-90,in=90] (4,0);
    \draw (1,0) to[out=90,in=180] (1.5,.9) to[out=0,in=180] (2,.8) to[out=0,in=180] (2.5,.9)
    to[out=0,in=90] (3,0);
    \draw[looseness=1] (.1,1.5) to[out=-90,in=-90] ++(.8,0) to[out=90,in=90] ++(-.8,0);
    \draw[thin] (.15,1.38) to[out=130,in=-90] (.05,1.6);
    \draw[thin] (.85,1.38) to[out=50 ,in=-90] (.95,1.6);
    \draw[looseness=1] (1.6,1.5) to[out=-90,in=-90] ++(.8,0) to[out=90,in=90] ++(-.8,0);
    \draw[thin] (1.65,1.38) to[out=130,in=-90] (1.55,1.6);
    \draw[thin] (2.35,1.38) to[out=50 ,in=-90] (2.45,1.6);
    \draw[looseness=1] (3.1,1.5) to[out=-90,in=-90] ++(.8,0) to[out=90,in=90] ++(-.8,0);
    \draw[thin] (3.15,1.38) to[out=130,in=-90] (3.05,1.6);
    \draw[thin] (3.85,1.38) to[out=50 ,in=-90] (3.95,1.6);
    \node[dgreen] at (-.2,.39) {\tiny $d_1$};
    \node[dgreen] at (4.2,.39) {\tiny $d_4$};
    \node[dgreen] at (1.25,2.15) {\tiny $d_2$};
    \node[dgreen] at (2.75,2.15) {\tiny $d_3$};
  \end{tikzpicture}
  \caption{If we denote by $D_i\coloneqq D_{d_i}$ the Dehn twist along $d_i$,
    then the mapping classes $D_1D_2D_4$ and $D_1D_3D_4$ are both
    $\partial$-irreducible in $\Gamma_{3,2}$ and not conjugate to each other,
    but they are conjugate in the extended mapping class group
    $\Gamma_{3,(2)}$.}\label{fig:conj}
\end{figure}

The subspace of $\frM_{g,n}$ corresponding to
$\fS_n\cdot \Lambda\frM_{g,n}(\phi)\subset\Lambda\frM_{g,n}$ can be described as
follows: it contains all free loops $\lambda\colon S^1\to\frM_{g,n}$ with the
same property as in \iref{Notation}{nota:LMphi}, but where $\Xi$ is only
required to preserve the parametrisation, and not necessarily the order, of the
boundary components.

\begin{lem}
  \label{lem:cosets}
  Let $\phi\in\Gamma_{g,n}$ and let $\frH\subset\frS_n$ be the image of
  $Z(\phi,\Gamma_{g,(n)})$ under the natural map $\Gamma_{g,(n)}\to\frS_n$.
  Then there is a bijection of $\frS_n$-sets
  \[\pi_0\pa{\frS_n\cdot \Lambda\scM_{g,n}(\phi)}\cong \frS_n/\frH.\]
\end{lem}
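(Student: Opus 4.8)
The plan is to identify $\pi_0\pa{\fS_n\cdot\Lambda\scM_{g,n}(\phi)}$ with a set of conjugacy classes in $\Gamma_{g,n}$ and then recognise that set as a coset space. First I would record, from the discussion just before the lemma, that the connected components $\Lambda\scM_{g,n}(\phi')$ appearing inside $\fS_n\cdot\Lambda\scM_{g,n}(\phi)$ are indexed by the $\Gamma_{g,n}$-conjugacy classes of elements $\phi'$ that lie in the single $\Gamma_{g,(n)}$-conjugacy class of $\phi$; equivalently, by the orbits of the conjugation action of $\frS_n\cong\Gamma_{g,(n)}/\Gamma_{g,n}$ on the set of $\Gamma_{g,n}$-conjugacy classes contained in $[\phi]_{\Gamma_{g,(n)}}$. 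So the underlying set of $\pi_0\pa{\fS_n\cdot\Lambda\scM_{g,n}(\phi)}$ is canonically this orbit set, and the $\frS_n$-action on it (coming from the $R_n$-action on $\Lambda\scM_{g,n}$, which factors through $\frS_n$ since $T^n$ acts within each component) is exactly the residual conjugation action.

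Next I would set up the standard orbit–stabiliser argument. Fix a lift $\tilde\phi\in\Gamma_{g,(n)}$ of $\phi$. The group $\Gamma_{g,(n)}$ acts transitively by conjugation on $[\phi]_{\Gamma_{g,(n)}}$, hence $\frS_n=\Gamma_{g,(n)}/\Gamma_{g,n}$ acts transitively on the set of $\Gamma_{g,n}$-conjugacy classes inside $[\phi]_{\Gamma_{g,(n)}}$: given two such classes, pick representatives, conjugate one to the other by some $\psi\in\Gamma_{g,(n)}$, and the image $\bar\psi\in\frS_n$ carries the first class to the second. So the $\frS_n$-set in question is a single transitive $\frS_n$-set, hence of the form $\frS_n/\frH_\phi$ where $\frH_\phi\subseteq\frS_n$ is the stabiliser of the base point, namely of the $\Gamma_{g,n}$-conjugacy class $[\phi]_{\Gamma_{g,n}}$. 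It then remains to identify $\frH_\phi$ with the claimed subgroup $\frH=\on{im}\big(Z(\phi,\Gamma_{g,(n)})\to\frS_n\big)$.

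For that identification: an element $\bar\psi\in\frS_n$ fixes $[\phi]_{\Gamma_{g,n}}$ if and only if, for some (equivalently any) lift $\psi\in\Gamma_{g,(n)}$ of $\bar\psi$, the element $\psi\phi\psi^{-1}$ is $\Gamma_{g,n}$-conjugate to $\phi$, say $\psi\phi\psi^{-1}=\xi\phi\xi^{-1}$ with $\xi\in\Gamma_{g,n}$. Then $\xi^{-1}\psi\in Z(\phi,\Gamma_{g,(n)})$, and it has the same image $\bar\psi$ in $\frS_n$ since $\xi\in\Gamma_{g,n}=\ker(\Gamma_{g,(n)}\to\frS_n)$; hence $\bar\psi\in\frH$. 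Conversely, if $\bar\psi=\on{pr}(\zeta)$ for some $\zeta\in Z(\phi,\Gamma_{g,(n)})$, then $\zeta\phi\zeta^{-1}=\phi$, so certainly $[\psi\phi\psi^{-1}]_{\Gamma_{g,n}}=[\phi]_{\Gamma_{g,n}}$ using $\psi=\zeta$, giving $\bar\psi\in\frH_\phi$. Thus $\frH_\phi=\frH$ and the bijection of $\frS_n$-sets follows. I do not anticipate a genuine obstacle here; the only point requiring a little care is the bookkeeping translating ``components of the free loop space'' into ``conjugacy classes'', which is already supplied by \tref{Notation}{nota:LMphi} and the paragraph following it, together with the fact that $T^n$ acts trivially on $\pi_0$ so the $R_n$-action on components is genuinely an $\frS_n$-action.
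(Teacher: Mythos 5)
Your proof is correct and takes essentially the same route as the paper's: both translate the set of components into $\Gamma_{g,n}$-conjugacy classes inside the single $\Gamma_{g,(n)}$-conjugacy class of $\phi$, both observe that the $\frS_n$-action on this set is transitive (so the orbit--stabiliser theorem applies), and both identify the stabiliser of the base class with $\frH$ by exactly the calculation you give (from $\psi\phi\psi^{-1}=\xi\phi\xi^{-1}$ with $\xi\in\Gamma_{g,n}$, conclude $\xi^{-1}\psi\in Z(\phi,\Gamma_{g,(n)})$ with the same image in $\frS_n$). One small wording slip in your first paragraph: the components are indexed by the set of $\Gamma_{g,n}$-conjugacy classes inside $[\phi]_{\Gamma_{g,(n)}}$ \emph{on which} $\frS_n$ acts, not by the \emph{orbits} of that $\frS_n$-action (which, by your own transitivity observation, is a singleton); your subsequent sentences make clear you meant the former.
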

\begin{proof}
  The action of $\frS_n$ on $\pi_0(\Lambda\scM_{g,n})$ can be described as
  follows: given $\sigma\in\frS_n$, we choose a mapping class
  $\xi\in\Gamma_{g,(n)}$ which is sent to $\sigma$ under the natural map
  $\Gamma_{g,(n)}\to \frS_n$, and a representative $\Xi$ of $\xi$; then the
  component $\pi_0(\Lambda\scM_{g,n}(\phi'))$ is sent by $\sigma$ to the
  component $\pi_0(\Lambda\scM_{g,n}((\phi')^\Xi))$.

  The action of $\frS_n$ on $\pi_0(\frS_n\cdot \Lambda\scM_{g,n}(\phi))$ is
  transitive, so it suffices to check that the stabiliser of the component
  $\Lambda\scM_{g,n}(\phi)$ is the subgroup $\frH$. First, note that an element
  $\sigma\in\frH$ can be lifted to a class $\xi\in\Gamma_{g,(n)}$ that commutes
  with $\phi$; this implies that $\frH$ is contained in the stabiliser of
  $\Lambda\scM_{g,n}(\phi)$. Vice versa, if $\sigma\in\frS_n$ belongs to the
  stabiliser of $\Lambda\scM_{g,n}(\phi)$, then we can choose a lift
  $\xi\in\Gamma_{g,(n)}$ of $\sigma$ and a representative $\Xi$ such that
  $\Lambda \scM_{g,n}(\phi^\Xi)$ is equal to $\Lambda\scM_{g,n}(\phi))$; this
  implies that $\phi^\Xi$ is conjugate to $\phi$ in $\Gamma_{g,n}$, i.e.\ there
  is a mapping class $\bar\xi\in\Gamma_{g,n}$ and a representative $\bar\Xi$
  with $\phi^{\Xi}=\phi^{\bar\Xi}$. As a consequence
  $\xi^{-1}\bar\xi\in\Gamma_{g,(n)}$ commutes with $\phi$, and since
  $\xi^{-1}\bar\xi$ also projects to $\sigma$ along the natural map
  $\Gamma_{g,(n)}\to\frS_n$, we conclude that $\sigma\in\frH$.
\end{proof}

\subsection{Relative generators for the \texorpdfstring{$\scM|_1$}{M|₁}-algebra
  \texorpdfstring{$\Lambda\scM_{*,1}$}{ΛM⁎₁}}

\begin{defi}
  For all $n\ge1$ and $g\ge0$ we define the space
  \[\frC_{g,n} \coloneqq
    \coprod_{\substack{[\varphi]\in\on{Conj}(\Gamma_{g,n})\\\del\text{-irreducible}}}
    \Lambda\scM_{g,n}(\phi).\]
  The action of $R_n$ on $\Lambda\scM_{g,n}$ restricts to an action on those
  path components that constitute $\frC_{g,n}$. We furthermore set, for all
  $n\ge1$,
  \[\frC_n\coloneqq \coprod_{g\ge 0} \frC_{g,n}\]
  and obtain a $\bm{R}$-algebra $\bm{\frC}\coloneqq(\frC_n)_{n\ge 1}$, where
  $\bm{R}=(R_n)_{n\ge 1}$.
\end{defi}
To see that the action of $R_n$ on $\Lambda\frM_{g,n}$ indeed restricts to an
action on $\frC_{g,n}$, we note that, for every $\partial$-irreducible mapping
class $\phi\in\Gamma_{g,n}$, the orbit
$\fS_n\cdot\Lambda\scM_{g,n}(\phi)$ is con-\linebreak tained in $\frC_{g,n}$.  Note
that, though $\fS_n\cdot\Lambda\scM_{g,n}(\phi)$ may be disconnected, by
\iref{Lemma}{lem:cosets} the action of $R_n$ is transitive on the connected
components of $\fS_n\cdot\Lambda\scM_{g,n}(\phi)$.\looseness-1

We now want to look at the relatively free $\scM$-algebra
$F^\scM_{\bm{R}}(\frC)$. Here we see, levelwise, that for each $n\ge 1$, we have
\[F^\scM_{\bm{R}}(\frC)_n \cong \int^{(k_1,\dotsc,k_r)\in \N_{\ge 1}\kern.3px\wr\kern.3px\bSig}
  \scM\binom{k_1,\dotsc,k_r}{n}\times_{R_{k_1}\times\dotsb\times
    R_{k_r}}
  \mathopen{}\big(\frC_{k_1}\times\dotsb\times\frC_{k_r}\big)\mathclose{}.\]

\begin{theo}\label{theo:isfreealg}
  There is an equivalence of $\scM|_1$-algebras
  \[ F^{\scM}_{\scB\otimes\bm{R}}(\bm{\frC}_+)_1
    = F^\scM_{\bm{R}}(\frC)_1
    \simeq \Lambda\scM_{*,1}\]
\end{theo}
The proof of \iref{Theorem}{theo:isfreealg} occupies the remainder of this
section.

\bigskip

Recall the initial $\scM$-algebra $(\scM\binom{}{n})_{n\ge 1}$. Note that
$\Lambda\scM\binom{}{n}$ has several connected components, and the ones
corresponding to connected surfaces form precisely the subspace
$\Lambda\scM_{*,n}\coloneqq\coprod_{g\ge0}\Lambda\scM_{g,n}$. We therefore have
an inclusion of $\bm{R}$-algebras
$\bm{\frC}\subset(\Lambda\scM\binom{}{n})_{n\ge1}$.  This inclusion is adjoint
to a morphism of $\scM$-algebras
\mbox{$F^{\scM}_{\bm{R}}(\bm{\frC})\to(\Lambda\scM\binom{}{n})_{n\ge1}$}, which can be
restricted to a morphism of $\scM|_1$-algebras
$\kappa\colon F^{\scM}_{\bm{R}}(\bm{\frC})_1 \to\Lambda\scM\binom{}{1} =
\Lambda\scM_{*,1}$. It suffices to prove that $\kappa$ is a weak equivalence at
the level of spaces. The right-hand side can be decomposed into its connected
components as
\[\Lambda\scM_{*,1}=\coprod_{\substack{g\ge 0\\ [\phi]\in\on{Conj}(\Gamma_{g,1})}}
  \Lambda\scM_{g,1}(\phi).\]
Fix $g\ge0$ and a mapping class $\phi\in\Gamma_{g,1}$. Decompose $\Sigma_{g,1}$
along a system of curves $c_1,\dots,c_h$ representing the cut locus of $\phi$,
and let $W$ and $Y$ denote the white and the yellow regions.
We use \iref{Notation}{nota:Yi} and write
$\smash{Y=\coprod_{i=1}^r\coprod_{j=1}^{s_i}Y_{i,j}}$.  Each component $Y_{i,j}$ of $Y$
is of type $\Sigma_{g_i,n_i}$, and the restriction
$\smash{\phi_{i,j}\in\Gamma(Y_{i,j},\del Y_{i,j})}$ of $\phi$ is conjugated by a
suitable diffeomorphism $\Xi_{i,j}\colon Y_{i,j}\to\Sigma_{g_i,n_i}$ to
$\bar\phi_i\in\Gamma_{g_i,n_i}$.

Note that $W$ is a connected surface: each component of $W$ touches some
component of $\del^{\text{out}}W$, and there is precisely one outgoing boundary
component, since $\del^{\text{out}}W=\del\Sigma_{g,1}\cong S^1$.

We denote by $\scM(W)\subset\scM\tbinom{s_1\times n_1,\dotsc, s_r\times n_r}{1}$
the component of the surface type of $W$, with one outgoing boundary component
and $h=\sum_{i}s_i\cdot n_i$ incoming boundary curves partitioned as follows:
for $1\le i\le r$ and $1\le j\le s_i$ the curves
$\del Y_{i,j}\subset\del^{\text{in}}W$ form a piece of the partition,
corresponding to an input of colour $n_i$. The total number of inputs is thus
$s_1+\dotsb +s_r$.

Consider now the following subspace, which a priori is a union of connected
components of $F^{\scM}_{\bm{R}}(\bm{\frC})_1$,
\begin{align*}
  F^{\scM}_{\bm{R}}(\bm{\frC})_\phi
  &\coloneqq  \scM(W) \times_{\prod_i R_{n_i}^{s_i}\rtimes \fS_{s_i}}
    \prod_{i=1}^r\pa{\fS_{n_i}\cdot\Lambda\scM_{g_i,n_i}(\bar\phi_i)}^{s_i}\\
  &\>= \scM(W) \times_{T^{h} \rtimes \prod_i (\frS_{n_i}^{s_i}\rtimes\frS_{s_i})}
    \prod_{i=1}^r\pa{\fS_{n_i}\cdot\Lambda\scM_{g_i,n_i}(\bar\phi_i)}^{s_i}
    \subset F^{\scM}_{\bm{R}}(\bm{\frC})_1.
\end{align*}

\begin{lem}
  \label{lem:FscMphiconnected}
  The space $F^{\scM}_{\bm{R}}(\bm{\frC})_\phi$ is connected.
\end{lem}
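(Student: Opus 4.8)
The plan is to show that $F^{\scM}_{\bm{R}}(\bm{\frC})_\phi$ is connected by exhibiting it as a balanced product $A \times_G B$ where the base $A = \scM(W)$ is connected, and then arguing that the fibre $B = \prod_{i}(\fS_{n_i}\cdot\Lambda\scM_{g_i,n_i}(\bar\phi_i))^{s_i}$ contributes only finitely many components which are all permuted transitively by the action of the group $G = T^h \rtimes \prod_i(\frS_{n_i}^{s_i}\rtimes \frS_{s_i})$ acting through $\scM(W)$. The key general fact I would use is: if a topological group $G$ acts on a space $B$ with $\pi_0(B/G) = *$ (i.e.\ $G$ acts transitively on $\pi_0(B)$), and $G$ acts on a connected space $A$, then $A\times_G B$ is connected, since $A\times_G B \to A/G$ is a quotient of a connected space by... — more cleanly, $A \times_G B$ receives a surjection from $A\times B$ composed through... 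Actually the clean statement: $\pi_0(A\times_G B)$ is a quotient of $\pi_0(A)\times_{\pi_0(G)}\pi_0(B)$, and since $A$ is connected this is a quotient of $\pi_0(B)/\pi_0(G)$-orbits, which is a point.

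First I would verify that $\scM(W)$ is connected. This follows because $W$ is a connected surface (as observed just before the lemma, since $\del^{\text{out}}W = \del\Sigma_{g,1}\cong S^1$ is a single curve and every component of $W$ touches it), and $\scM(W)$ is the component of $\scM\binom{s_1\times n_1,\dots,s_r\times n_r}{1}$ of that fixed surface type; by the description of $\pi_0(\bfM_\partial(k,n))$ recalled in \tref{Subsection}{subsec:scM}, fixing the surface type (number of components, partitions, genera) pins down a single path component, which is homotopy equivalent to a product of moduli spaces $\frM_{g,n}$, each of which is connected. Second, I would analyse $\pi_0$ of the fibre. By \tref{Lemma}{lem:cosets}, for each $i$ the space $\fS_{n_i}\cdot\Lambda\scM_{g_i,n_i}(\bar\phi_i)$ has $\pi_0 \cong \frS_{n_i}/\frH_i$ as an $\frS_{n_i}$-set, where $\frH_i$ is the image of $Z(\bar\phi_i,\Gamma_{g_i,(n_i)})$ — in particular this is a \emph{finite} set on which $\frS_{n_i}$ acts transitively. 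Hence $\pi_0$ of $\prod_i(\fS_{n_i}\cdot\Lambda\scM_{g_i,n_i}(\bar\phi_i))^{s_i}$ is the finite set $\prod_i(\frS_{n_i}/\frH_i)^{s_i}$, and the group $\prod_i(\frS_{n_i}^{s_i}\rtimes \frS_{s_i})$ (the torus part $T^h$ acting trivially on $\pi_0$) acts on it transitively: the factors $\frS_{n_i}^{s_i}$ act transitively on each $(\frS_{n_i}/\frH_i)^{s_i}$ coordinatewise, and the wreath $\frS_{s_i}$ permutes the $s_i$ copies.

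Putting these together: the map $F^{\scM}_{\bm{R}}(\bm{\frC})_\phi = \scM(W)\times_G B \to *$ factors through the statement that any two points are connected by a path — given two points, first use path-connectedness of $\scM(W)$ and of $B$ within a fixed $G$-translate to reduce to comparing $[w,b]$ and $[w,b']$ with $b,b'$ in possibly different components of $B$; then since $G$ acts transitively on $\pi_0(B)$, pick $g\in G$ with $g\cdot b'$ in the same component as $b$, and observe $[w,b'] = [wg, g b']$, so moving $wg$ back to $w$ along a path in $\scM(W)$ (using that $\scM(W)$ is connected) and then moving $gb'$ to $b$ along a path in $B$ shows $[w,b']$ and $[w,b]$ lie in the same component. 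The main obstacle — really the only substantive point — is confirming that the group genuinely acts transitively on $\pi_0(B)$, i.e.\ correctly identifying which group is acting and via \tref{Lemma}{lem:cosets} that each factor $\pi_0(\fS_{n_i}\cdot\Lambda\scM_{g_i,n_i}(\bar\phi_i))$ is a transitive $\frS_{n_i}$-set; everything else is formal manipulation of balanced products and path-connectedness.
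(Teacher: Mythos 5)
Your proof is correct and follows essentially the same route as the paper: observe that $\scM(W)$ is connected, reduce to transitivity of the group action on $\pi_0$ of the fibre, and peel the factors apart until the claim follows from transitivity of the $\frS_{n_i}$-action on $\pi_0\bigl(\fS_{n_i}\cdot\Lambda\scM_{g_i,n_i}(\bar\phi_i)\bigr)$. The only cosmetic difference is that you invoke \tref{Lemma}{lem:cosets} to identify this $\pi_0$ as the coset space $\frS_{n_i}/\frH_i$, whereas the paper appeals directly to the definition of the $\frS_{n_i}$-orbit; the content is identical.
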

\begin{proof}
  We observe that $\scM(W)$ is connected; therefore, in order to prove that
  $F^{\scM}_{\bm{R}}(\bm{\frC})_\phi$ is connected, it suffices to check that
  $T^{h} \rtimes \prod_{i=1}^r (\frS_{n_i}^{s_i}\rtimes\frS_{s_i})$ acts
  transitively on the connected components of
  $\prod_{i=1}^r\pa{\fS_{n_i}\cdot\Lambda\scM_{g_i,n_i}(\bar\phi_i)}^{s_i}$.
  
  This reduces to checking that for all $1\le i\le r$ the group
  $\frS_{n_i}^{s_i}\rtimes\frS_{s_i}$ acts transitively on the components of
  $\pa{\fS_{n_i}\cdot\Lambda\scM_{g_i,n_i}(\bar\phi_i)}^{s_i}$, and to this aim it
  suffices to check that $\frS_{n_i}$ acts transitively on the components of
  $\fS_{n_i}\cdot\Lambda\scM_{g_i,n_i}(\bar\phi_i)$, which is clear by definition.
\end{proof}
\enlargethispage{-\baselineskip}
Using \iref{Notation}{nota:Hi}, the proof of \iref{Lemma}{lem:FscMphiconnected}
shows that there is a homeomorphism
\[F^{\scM}_{\bm{R}}(\bm{\frC})_\phi\cong \scM(W) \times_{T^h \rtimes \prod_i
    (\frH_i^{s_i}\rtimes\frS_{s_i})}
  \prod_{i=1}^r\pa{\Lambda\scM_{g_i,n_i}(\bar\phi_i)}^{s_i},\]
using that $\frH_i\subseteq\frS_{n_i}$ is the stabiliser of the path component
$\Lambda\scM_{g,n}(\bar\varphi_i)\subseteq \frC_{g_i,n_i}$.  The following
proposition directly implies \iref{Theorem}{theo:isfreealg}, since
$F^{\scM}_{\bm{R}}(\bm{\frC})_\phi$ and $\Lambda\scM_{g,1}(\phi)$ are the
connected components of $F^{\scM}_{\bm{R}}(\bm{\frC})_1$ and
$\Lambda\scM_{*,1}$.

\begin{prop}
  The map $\kappa$ restricts to a homotopy equivalence
  \[\kappa\colon F^{\scM}_{\bm{R}}(\bm{\frC})_\phi\to \Lambda\scM_{g,1}(\phi).\]
\end{prop}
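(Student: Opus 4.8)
The plan is to show that both $F^{\scM}_{\bm{R}}(\bm{\frC})_\phi$ and $\Lambda\scM_{g,1}(\phi)$ are aspherical and that $\kappa$ induces an isomorphism on $\pi_1$; then $\kappa$ is a homotopy equivalence by the Whitehead theorem for Eilenberg–MacLane spaces. The target side is immediate: $\Lambda\scM_{g,1}(\phi)\simeq\Lambda\frM_{g,1}(\phi)\simeq BZ(\phi,\Gamma_{g,1})$ is aspherical with $\pi_1=Z(\phi,\Gamma_{g,1})$. For the source I would start from the homeomorphism recorded just above the proposition,
\[
  F^{\scM}_{\bm{R}}(\bm{\frC})_\phi\;\cong\;
  \scM(W)\times_{G}\prod_{i=1}^r\pa{\Lambda\scM_{g_i,n_i}(\bar\phi_i)}^{s_i},
  \qquad G\coloneqq T^h\rtimes\prod_{i=1}^r\pa{\frH_i\wr\frS_{s_i}},
\]
a compact Lie group. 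Since $G\subseteq R_h$ and $\scM(W)\subseteq\bfM_\del\binom{h}{1}$, \tref{Lemma}{lem:freeactionRn} shows $G$ acts freely on $\scM(W)$, and being compact it acts properly; hence $\scM(W)\to\scM(W)/G$ is a principal $G$-bundle and $F^{\scM}_{\bm{R}}(\bm{\frC})_\phi\to\scM(W)/G$ is a fibre bundle with fibre $X\coloneqq\prod_i\pa{\Lambda\scM_{g_i,n_i}(\bar\phi_i)}^{s_i}$.

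Next I would establish asphericity. The fibre $X$ is a product of the aspherical spaces $\Lambda\scM_{g_i,n_i}(\bar\phi_i)\simeq BZ(\bar\phi_i,\Gamma_{g_i,n_i})$, and the base satisfies $\scM(W)\simeq\frM(W)=B\Gamma(W,\del W)$, as every component of $W$ meets $\del^{\text{out}}W$. Feeding $G\to\scM(W)\to\scM(W)/G$ into the long exact sequence of homotopy groups, and using that $\pi_1(G)=\Z^h$ maps injectively onto the subgroup $\langle D_{c_1},\dots,D_{c_h}\rangle\subseteq\Gamma(W,\del W)$ generated by the incoming boundary Dehn twists — this is exactly the input of \cite[Lem.\,3.17]{FarbMargalit} used in \tref{Subsection}{subsec:kerneleta} — one gets $\pi_2(\scM(W)/G)=0$ and an extension
\[
  1\to\Gamma(W,\del W)/\Z^h\to\pi_1(\scM(W)/G)\to\pi_0(G)=\prod_i\frH_i\wr\frS_{s_i}\to 1,
\]
which one recognises as $\Gamma^{\prod_i\frH_i\wr\frS_{s_i}}(W)/\Z^h$. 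Hence $F^{\scM}_{\bm{R}}(\bm{\frC})_\phi$ is aspherical, and from the bundle it has
\[
  1\to\prod_iZ(\bar\phi_i,\Gamma_{g_i,n_i})^{s_i}\to\pi_1\pa{F^{\scM}_{\bm{R}}(\bm{\frC})_\phi}\to\Gamma^{\prod_i\frH_i\wr\frS_{s_i}}(W)/\Z^h\to 1.
\]

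The final step is to recognise this extension, together with $\kappa_*$, as the isomorphism of \tref{Proposition}{prop:Zdelphistructure}. Group-theoretically, quotienting the fibre product $\tZ(\phi)=\Gamma^{\prod_i\frH_i\wr\frS_{s_i}}(W)\times^{\prod_i\frH_i\wr\frS_{s_i}}\prod_iZ(\bar\phi_i)\wr\frS_{s_i}$ by $\Z^h$ and projecting onto the first factor produces exactly such an extension, with kernel $\ker\big(\prod_iZ(\bar\phi_i)\wr\frS_{s_i}\to\prod_i\frH_i\wr\frS_{s_i}\big)=\prod_iZ(\bar\phi_i,\Gamma_{g_i,n_i})^{s_i}$; I would match it with the bundle extension, the splitting $G=T^h\rtimes\pi_0(G)$ supplying the compatible section. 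Finally, $\kappa$ is by construction the colour‑$1$ restriction of the $\scM$‑action $F^{\scM}_{\bm{R}}(\bm{\frC})\to(\Lambda\scM\binom{}{n})_{n\ge1}$, which over each point of $\scM(W)/G$ glues the cobordism $W$ onto the free loops in the $\scM_{g_i,n_i}$ along $\del^{\text{in}}W=\del Y$; on monodromies this is precisely the gluing homomorphism $\hat\epsilon$ of \tref{Subsection}{subsec:kerneleta}, so $\kappa_*$ is the map induced by $\epsilon$, which by \tref{Proposition}{prop:Zdelphistructure} descends to an isomorphism $\tZ(\phi)/\Z^h\xrightarrow{\ \cong\ }Z(\phi,\Gamma_{g,1})$. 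With $\kappa$ a $\pi_1$‑isomorphism of aspherical spaces, the claim follows. The main obstacle is this last paragraph: verifying that the $\pi_1$‑extension coming from the fibre bundle agrees with $\tZ(\phi)/\Z^h$ \emph{as an extension}, not merely in its outer terms, and that $\kappa_*$ really realises $\epsilon$ — this amounts to transporting the centraliser analysis of \tref{Section}{sec:centralisers} to the level of classifying spaces and monodromies, and requires care with boundary parametrisations and with the orientations of the cut curves $c_1,\dots,c_h$.
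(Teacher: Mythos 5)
Your approach shares the paper's core strategy---establish asphericity of both sides and show that $\kappa_*$ is a $\pi_1$-isomorphism by identifying it with the surjection $\epsilon\colon\tZ(\phi)\to Z(\phi,\Gamma_{g,1})$ of \tref{Proposition}{prop:Zdelphistructure}, tracing the $\Z^h$ kernel through $\pi_1(T^h)\mapsto (D_{c_i},D_{c_i}^{-1})$---but you organise the bundle bookkeeping differently, and this is where the two arguments diverge. You run the principal $G$-bundle $G\to\scM(W)\to\scM(W)/G$ first, with $G=T^h\rtimes\prod_i\frH_i\wr\frS_{s_i}$, and then layer the associated fibre bundle with fibre $X=\prod_i\pa{\Lambda\scM_{g_i,n_i}(\bar\phi_i)}^{s_i}$ on top. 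This forces an abstract match between the resulting extension of $\pi_1(\scM(W)/G)\cong\Gamma^{\frH}(W)/\Z^h$ by $\pi_1(X)$ and the group $\tZ(\phi)/\Z^h$, which, as you correctly flag at the end, requires verifying the extension class and not just the outer terms. The paper instead keeps the $G$-action diagonal on $\scM(W)\times X$ and splits the principal $G$-bundle in two stages, first by $\frH$ and then by $T^h$: the first stage produces $\scM(W)\times_\frH X$, which is recognised directly as a $K(\tZ(\phi),1)$ because this balanced product realises the homotopy pullback of $B\Gamma^\frH(W)$ and $B Z(\phi_Y,\Gamma(Y))$ over $B\frH$, so the fibre product defining $\tZ(\phi)$ in \tref{Definition}{defi:tZdelphi} appears on the nose. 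That choice makes the extension-matching step disappear: the remaining $T^h$-bundle then exhibits $\pi_1\pa{F^\scM_{\bm{R}}(\bm{\frC})_\phi}$ immediately as a quotient of $\tZ(\phi)$ by a $\Z^h$, and what is left---identifying $\kappa_*$ with $\epsilon$ via the gluing of $W$ onto the loops in the $\scM_{g_i,n_i}$, and pinning the $\Z^h$ down via the lift of the fibre inclusion $T^h\hookrightarrow T^h\rtimes\frH$---is precisely your final paragraph. So your plan is sound and correctly locates the hard step; the paper's two-stage factoring is designed to sidestep exactly the abstract extension comparison you would otherwise have to supply.
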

\begin{proof}
  The space $\scM(W)$ classifies $\Gamma(W,\del W)$, whereas the
  product $\prod_{i}\pa{\Lambda\scM_{g_i,n_i}(\bar\phi_i)}^{s_i}$ is a
  classifying space for the group
  $\prod_{i} (Z(\bar\phi_i,\Gamma_{g_i,n_i}))^{s_i}$. This implies that there is
  a homotopy equivalence
  \[\scM(W) \times \prod_{i=1}^r\pa{\Lambda\scM_{g_i,n_i}(\bar\phi_i)}^{s_i}
    \simeq B\pa{\Gamma(W,\del W)\times \prod_{i=1}^r (Z(\bar\phi_i,\Gamma_{g_i,n_i}))^{s_i}}.\]
  For $\frH\coloneqq \prod_i(\frH^{s_i}_i\rtimes\frS_{s_i})\subseteq\frS_h$, the
  compact Lie group $T^{h} \rtimes \frH \subseteq R_n$ acts freely on $\scM(W)$
  by \iref{Lemma}{lem:freeactionRn}. Since
  $\scM(W)\times \prod_i (\Lambda\scM_{g_i,n_i}(\bar\varphi_i))^{s_i}$ is a
  Tychonoff space, \cite[Thm.\,A8viii]{Orbispaces} ensures that we obtain a
  principal fibre bundle
  \[T^{h} \!\rtimes \frH
    \to \scM(W) \times \prod_{i=1}^r\pa{\Lambda\scM_{g_i,n_i}(\bar\phi_i)}^{s_i}
    \to \scM(W) \times\!_{T^{h} \rtimes \frH} \prod_{i=1}^r\pa{\Lambda\scM_{g_i,n_i}(\bar\phi_i)}^{s_i}\]
  We will split the fibre bundle in two stages, involving separately the factors
  $\frH$ and $T^{h}$ of the fibre. First, consider the principal fibre bundle
  \[\frH \to \scM(W) \times \prod_{i}\pa{\Lambda\scM_{g_i,n_i}(\bar\phi_i)}^{s_i}\
    \to \scM(W) \times\!_\frH \prod_{i}\pa{\Lambda\scM_{g_i,n_i}(\bar\phi_i)}^{s_i},\]
  from which we obtain that
  $\smash{\scM(W) \times_{\frH}
    \prod_{i}\pa{\Lambda\scM_{g_i,n_i}(\bar\phi_i)}^{s_i}}$ is connected and
  aspherical.

  In fact
  $\smash{\scM(W) \times_{\frH}
    \prod_{i}\pa{\Lambda\scM_{g_i,n_i}(\bar\phi_i)}^{s_i}}$ is a classifying
  space for the group $\smash{\tZ(\phi)}$ introduced in
  \iref{Definition}{defi:tZdelphi}: the space $\scM(W)\simeq B\Gamma(W,\del W)$
  admits a free action of the finite group $\frH$, and the quotient
  $\scM(W)/\frH$ is a classifying space for the extended mapping class group
  $\Gamma^{\frH}(W)$; see \iref{Definition}{defi:extendedmcg}.  Similarly, the
  homotopy quotient
  \[\pa{\prod_{i=1}^r\pa{\Lambda\scM_{g_i,n_i}(\bar\phi_i)}^{s_i}}\qq \frH
    =E\frH\times_\frH \pa{\prod_{i=1}^r\pa{\Lambda\scM_{g_i,n_i}(\bar\phi_i)}^{s_i}}\]
  is a classifying space for $Z(\phi_Y,\Gamma(Y))$, which by
  \iref{Lemma}{lem:ZphiYdecomposition} is isomorphic to
  $\prod_{i}(Z(\bar\phi_i,\Gamma_{g_i,(n_i)})^{s_i}\rtimes \fS_{s_i})$.  The
  balanced product
  $\scM(W) \times_{\frH}\prod_{i}\pa{\Lambda\scM_{g_i,n_i}(\bar\phi_i)}^{s_i}$
  is then homotopy equivalent to\looseness-1
  \[\scM(W)\times_{\frH}
    \pa{\prod_{i=1}^r\pa{\Lambda\scM_{g_i,n_i}(\bar\phi_i)}^{s_i}\times E\frH},\]
  which is a classifying space for $\tZ(\phi)$. The last step is to consider the
  fibre bundle (which is no longer a principal bundle)
  \[T^{h} \to \scM(W) \times\!_{\frH}\prod_{i=1}^r\pa{\Lambda\scM_{g_i,n_i}(\bar\phi_i)}^{s_i}
    \to \scM(W) \times_{T^{h} \rtimes \frH}\prod_{i=1}^r
    \pa{\Lambda\scM_{g_i,n_i}(\bar\phi_i)}^{s_i}.\]
  Since the fibre and the total space are aspherical, the base space is also
  aspherical: note that the base is precisely
  $F^{\scM}_{\bm{R}}(\bm{\frC})_\phi$.

  The above fibre bundle shows that the fundamental group of
  $F^{\scM}_{\bm{R}}(\bm{\frC})_\phi$ is a quotient of $\tZ(\phi)$ by a normal
  subgroup $\Z^h$. Consider the gluing map
  \[\tilde\kappa\colon\scM(W) \times_{\frH}\prod_{i=1}^r
    \pa{\Lambda\scM_{g_i,n_i}(\bar\phi_i)}^{s_i}
    \to \Lambda\scM_{g,1}(\phi).\]
  The map induced by $\tilde\kappa$ on fundamental groups is
  $\epsilon\colon \tZ_\del(\phi)\to Z_\del(\phi)$, so by
  \iref{Proposition}{prop:Zdelphistructure} we just have to identify the kernel
  of $\epsilon$ and the subgroup
  \[\pi_1(T^h)\subset\pi_1\pa{ \scM(W) \times\!_\frH
      \prod_{i=1}^r\pa{\Lambda\scM_{g_i,n_i}(\bar\phi_i)}^{s_i}}.\]
  The fibre inclusion
  $T^{h} \hookrightarrow \scM(W)
  \times_{\frH}\prod_i\pa{\Lambda\scM_{g_i,n_i}(\bar\phi_i)}^{s_i}$ lifts to the
  covering space
  $\scM(W) \times\prod_{i}\pa{\Lambda \scM_{g_i,n_i}(\bar\phi_i)}^{s_i}$ of the
  right-hand side and becomes part of the fibre inclusion
  $T^{h}\rtimes\frH\hookrightarrow\scM(W)\times
  \prod_{i}\pa{\Lambda\scM_{g_i,n_i}(\bar\phi_i)}^{s_i}$ of the first principal
  bundle. In particular, the inclusion
  \mbox{$\pi_1(T^h)\hookrightarrow \tZ(\phi) =\pi_1\pa{\scM(W) \times_{\frH}\prod_{i}
    \pa{\Lambda\scM_{g_i,n_i}(\bar\phi_i)}^{s_i}}$} has its image inside
  \[\Gamma(W,\del W)\times Z(\phi_Y,\Gamma(Y,\del Y))
    =\pi_1\pa{\scM(W) \times \prod_{i=1}^r
      \pa{\Lambda\scM_{g_i,n_i}(\bar\phi_i)}^{s_i}}.\]
  Recall $T^h$ is included into
  $\scM(W) \times \prod_{i}\pa{\Lambda\scM_{g_i,n_i}(\bar\phi_i)}^{s_i}$ as an
  orbit of the \emph{balanced} diagonal $T^h$-action on the two factors of
  the latter space. The action of $(z_1,\dots,z_h)\in T^h$ is on
  the first factor, by precomposition with
  \mbox{$(z_1,\dots,z_h)\in R_n\subset\bfM_\del\binom{h}{h}$}, and  on
  the second factor, by postcomposition with \emph{the inverse of}
  \mbox{$(z_1,\dots,z_h)\in R_n\subset\bfM_\del\binom{h}{h}$}.

  At the level of fundamental groups, the $i$\textsuperscript{th} generator of
  $\pi_1(T^h)\cong\Z^h$ is mapped to
  $(D_{c_i},D_{c_i}^{-1})\in\Gamma(W,\del W)\times Z(\phi_Y,\Gamma(Y,\del Y))$.
  We saw that these $h$ couples generate the kernel of $\epsilon$ as a free
  abelian group of rank $h$.
\end{proof}

Putting together \iref{Theorem}{theo:isfreealg} and
\iref{Theorem}{theo:splitOHS}, we obtain \iref{Theorem}{theo:main}:

\begin{proof}[Proof of \iref{Theorem}{theo:main}]
  The point-set requirements for our \iref{Setting}{set:set} are clearly
  satisfied in the case $\scO=\scM$, $\bfC=\bm{R}$, and $\bmX=\frC_+$.  Hence,
  we can apply \iref{Theorem}{theo:splitOHS} and obtain
  \[\Omega B \tF^\scM_{\scB\otimes \bm{R}}(\frC_+)_n
    \simeq \Omega B\scM\binom{}{n}\times \Omega^\infty\Sigma^\infty
    \on{hocolim}_{\bm{R}}(\frC_+).\]
  Consider the left side first: we claim that
  \mbox{$\phi\colon \tF^\scM_{\scB\otimes \bm{R}}(\frC_+) \to
    F^\scM_{\scB\otimes\bm{R}}(\frC_+)$,} the map of $\scM$-algebras from the
  derived relatively free algebra to the actual one is an equivalence: here we
  use that the basepoints are isolated, whence the map splits into
  $\coprod_{K\in S}\phi_K$ for a system $S\subseteq\coprod_r \N_{\ge 1}^r$ of
  representatives of tuples with respect to coordinate permutation.  If we
  denote again by $r(k)\ge 0$ the number of occurrences of $k$ in the sequence
  $K$, then the compact Lie group $G\coloneqq \prod_{k\ge 0}R_{k}\wr \frS_{r_k}$
  acts on $Y\coloneqq \scM\binom{K}{n}\times \prod_i\frC_{k_i}$, and $\phi_K$ is
  exactly the map that compares the \emph{homotopy} quotient of this action with
  the \emph{actual} quotient. However, $Y$ is a Hausdorff space and $G$ acts
  freely on $Y$, since $\scM$ is $\frS$-free and $R_k$ acts freely on $\scM$ by
  precomposition, see \iref{Lemma}{lem:freeactionRn}.  In this situation,
  \cite[Thm.\,A.7]{Orbispaces} tells us that the map comparing the homotopy
  quotient with the actual quotient is an equivalence.  In particular, for
  $n=1$, the left side is equivalent, as a loop space, to the group completion
  $\Omega B\Lambda\frM_{*,1}$ by \iref{Theorem}{theo:isfreealg}.
  Let us now look at the right side: here we see that
  \[\on{hocolim}_{\bm{R}}(\frC_+) \simeq \bigvee_{k\ge 1}(\frC_n)_+\sslash R_k
    = \set{*}\sqcup\coprod_{k\ge 1}\frC_k\sslash R_k,\]
  where $\sslash$ denotes the homotopy quotient. If we focus again on the case
  $n=1$, then we saw in \iref{Example}{ex:initialscM} that the first level of
  the initial $\scM$-algebra, $\scM\binom{}{1}$, coincides with the old
  $\scM|_1$-algebra $\coprod_{g}\frM_{g,1}$, whose group completion is
  accessible by the Madsen–Weiss theorem. Hence, we can replace the first factor
  $\Omega B\scM\binom{}{1}$ by $\Omega^\infty\mathbf{MTSO}(2)$. This proves the
  claim.
\end{proof}

\appendix
\section{General mapping spaces into \texorpdfstring{$\frM_{*,1}$}{M⁎₁}}
\label{apx:generalX}
In this first appendix, we briefly discuss a variation of
\iref{Theorem}{theo:main} for a generic parametrising topological space $X$,
highlighting the main enhancements that the proof requires.

We chose to restrict ourselves to $X=S^1$ throughout the main part of
the article because this special context already presents all the relevant
complexity of the problem, and we believe that it is more instructive to
consider the special case first.

\subsection{General parametrising spaces \texorpdfstring{$X$}{X}}

Our goal is to give a description of $\Omega B(\on{map}(X,\frM_{*,1}))$ as an
infinite loop space. We assume for simplicity that $X$ has the homotopy type of
a connected \acr{CW} complex.

Let $G\coloneqq \pi_1(X)$.  For all $g\ge0$ and $n\ge1$, we replace the space
$\frM_{g,n}$ with the homotopy equivalent space $\scM_{g,n}$, which admits an
action of the group $R_n$.  Since $\scM_{g,n}$ is aspherical, the space
$\on{map}(X,\scM_{g,n})$ is homotopy equivalent to $\on{map}(BG,\scM_{g,n})$.

More precisely, let $\on{Conj}(G\to\Gamma_{g,n})$ be the set of conjugacy
classes of homomorphisms $G \to \Gamma_{g,n}$, where two such homomorphisms
$\phi,\phi'$ are conjugate if there exists $\xi\in\Gamma_{g,n}$, represented by
$\Xi$, with $\phi'=\phi^\Xi$: then we can identify
\[\on{map}(X,\scM_{g,n})=\hspace*{-2mm}
  \coprod_{[\phi]\in\on{Conj}(G\to\Gamma_{g,n})}\on{map}(X,\scM_{g,n})(\phi)
  \simeq\hspace*{-2mm} \coprod_{[\phi]\in\on{Conj}(G\to\Gamma_{g,n})} BZ(\phi,\Gamma_{g,n}),\]
where $Z(\phi,\Gamma_{g,n})$ denotes the centraliser of the image of $\phi$ in
$\Gamma_{g,n}$.

\begin{defi}
 \label{defi:deltairreduciblegeneralised}
 A homomorphism $\phi\colon G\to\Gamma_{g,n}$ is called
 \emph{$\del$-irreducible} if it is not the trivial representation in
 $\Gamma_{0,1}$ and there is no isotopy class of an essential arc
 $\alpha\subset\Sigma_{g,n}$ which is fixed by the entire image of $\phi$. Being
 $\del$-irreducible is a conjugacy-invariant property of representations.  We
 denote by
 \[\frC_{g,n}(X) \coloneqq
   \coprod_{\substack{[\varphi]\in\on{Conj}(G\to\Gamma_{g,n})\\\del\text{-irreducible}}}
   \on{map}(X,\scM_{g,n})(\phi).\]
\end{defi}

For all $n\ge1$ there is a natural action of the group $R_n$ on $\frC_{g,n}(X)$,
and the analogue of \iref{Theorem}{theo:main} is the following:

\begin{theo}
 \label{theo:maingeneralised}
 In the above setting and with the above definitions, there is an equivalence of
 loop spaces
 \[\Omega B(\on{map}(X,\frM_{*,1}))\simeq \Omega^\infty\mathbf{MTSO}(2)
    \times \Omega^\infty\Sigma^\infty_+\coprod_{n\ge 1}
    \coprod_{g\ge 0} \frC_{g,n}(X) \qq R_n.\]
\end{theo}

In order to prove \iref{Theorem}{theo:maingeneralised}, the main obstacle is, at
the very beginning, to generalise \iref{Proposition}{prop:Alexandermethod} from
the context of a single diffeomorphism $\Phi$, representing a single mapping
class $\phi\in\Gamma(\caS,\del\caS)$, to the context of a representation
$\phi\colon G\to\Gamma(\caS,\del\caS)$. This is done as follows:

\begin{prop}
 \label{prop:Alexandermethodgeneralised}
 Let $\alpha_0,\dots,\alpha_k,\beta$ be a collection of essential arcs in a
 connected surface with non-empty boundary $\caS$ satisfying the two properties
 listed in \iref{Proposition}{prop:Alexandermethod}. Let $U$ be a small
 neighbourhood of
 $\alpha_0\cup\dots\cup\alpha_k\cup\beta\cup\del\caS\subset\caS$ in $\caS$, let
 \mbox{$\Diff(\caS,U)\subset\Diff(\caS,\del\caS)$} be the group of diffeomorphisms of
 $\caS$ fixing $U$ pointwise, and let $\Gamma(\caS,U)$ be the associated mapping
 class group.
  
 Then the canonical homomorphism of groups
 $\Gamma(\caS,U)\to\Gamma(\caS,\del\caS)$, induced by the inclusion
 $\Diff(\caS,U)\subset\Diff(\caS,\del\caS)$, is injective. Moreover, a
 homomorphism \mbox{$\phi\colon G\to \Gamma(\caS,\del\caS)$} lifts to $\Gamma(\caS,U)$
 if and only if for all $\gamma\in G$ the mapping class
 $\phi(\gamma)\in\Gamma(\caS,\del\caS)$ fixes up to isotopy each of the arcs
 $\alpha_0,\dots,\alpha_k$ and $\beta$.
\end{prop}
\begin{proof}
  Note that $U$ is a connected surface with at least three boundary components,
  and in particular $U$ is neither a disc nor an annulus. We can then apply
  \cite[Thm.\,3.18]{FarbMargalit} (see also the remark after the cited Theorem)
  and conclude that the homomorphism of mapping class groups
  $\Gamma(\caS\setminus U,\del(\caS\setminus U))\to\Gamma(\caS,\del\caS)$ is
  injective. Composing with the natural isomorphism
  $\Gamma(\caS\setminus U,\del(\caS\setminus U))\cong \Gamma(\caS,U)$ we obtain
  precisely the canonical homomorphism of groups
  $\Gamma(\caS,U)\to\Gamma(\caS,\del\caS)$, which hence is injective.

  For the second claim, the injectivity of
  $\Gamma(\caS\setminus U,\del(\caS\setminus U))\to\Gamma(\caS,U)$ implies that
  $\phi\colon G\to \Gamma(\caS,\del\caS)$ lifts to $\Gamma(\caS,U)$ if and only
  if, for all $\gamma\in G$, the mapping class
  $\phi(\gamma)\in\Gamma(\caS,\del\caS)$ lies in the image of the canonical
  homomorphism, and \iref{Proposition}{prop:Alexandermethod} ensures that this
  is equivalent to requiring, for all $\gamma\in G$, that the mapping class
  $\phi(\gamma)$ fixes up to isotopy each of the arcs $\alpha_0,\dots,\alpha_k$,
  and $\beta$.
\end{proof}

For a homomorphism $\phi\colon G\to\Gamma(\caS,\del\caS)$ we define the
\emph{fixed arc complex} of $\phi$ as in
\iref{Definition}{defi:fixedarccomplex}, but we require vertices to be isotopy
classes of arcs fixed by \emph{every} mapping class in the image of $\phi$: in
fact the fixed arc complex of $\phi$ is the intersection of the arc complexes of
$\phi(\gamma)$ for $\gamma$ ranging in $G$, where we consider all arc complexes
as simplicial subcomplexes of the arc complex of $\one\in\Gamma(\caS,\del\caS)$.

The bound on the dimension of the fixed arc complex of $\phi$ in terms of
$\chi(\caS)$ is proved in the same way. The white–yellow decomposition of $\caS$
along the cut locus is defined in the same way as in
\iref{Construction}{constr:cutlocus}: by choosing a maximal simplex in the
fixed-arc complex of the homomorphism $\phi$. The proof of
\iref{Lemma}{lem:cutlocus} can be extended to this generalised context as
follows:

\begin{lem}
 \label{lem:cutlocusgeneralised}
 Let $\phi\colon G\to\Gamma(\caS,\del\caS)$, let $\alpha_0,\dots,\alpha_k$ be
 arcs representing a maximal simplex in the cut locus of $\phi$, and let $\beta$
 be another arc fixed up to isotopy by $\phi$; then $\beta$ can be isotoped
 relative to endpoints to an arc lying in a small neighbourhood $U$ of
 $\alpha_0\cup\dots\cup\alpha_k\cup\del\caS$.
\end{lem}
\begin{proof}
  We assume without loss of generality that $\beta$ is in minimal position with
  $\alpha_0,\dots,\alpha_k$.  By
  Proposition~\ref{prop:Alexandermethodgeneralised} we can lift $\phi$ to a
  homomorphism \mbox{$\tilde\phi\colon G\to\Gamma(\caS,U')$}, where $U'$ is a small
  neighbourhood of $\alpha_0\cup\dotsb\cup\alpha_k\cup\beta\cup\del \caS$.  For
  each $\gamma\in G$ we can thus represent $\tilde\phi(\gamma)$ by a
  diffeomorphism $\Phi_{\gamma}\colon\caS\to\caS$ fixing $U$ pointwise.\looseness-1

  The rest of the proof is the same as for \iref{Lemma}{lem:cutlocus}: in
  particular, we use the representatives $\Phi_{\gamma}$ to check that $\beta'$
  and $\beta''$ are fixed up to isotopy by $\phi(\gamma)$ for all $\gamma\in G$.
\end{proof}
The proof of \iref{Proposition}{prop:cutlocus} can be repeated word by word in
the generalised context, and the analogue of \iref{Lemma}{lem:cutlocusequiv} is
the following:
\begin{lem}
 \label{lem:cutlocusequivgeneralised}
 Let $\psi\in\Gamma(\caS,\del\caS)$ be a mapping class, and let $\Psi$ be a
 diffeomorphism representing $\psi$. Moreover, let
 $\phi\colon G\to\Gamma(\caS,\del\caS)$ be a group homomorphism and let
 $[c_1,\dots,c_h]$ be the cut locus of $\phi$.

 Then $[\Psi(c_1),\dots,\Psi(c_h)]$ is the cut locus of the conjugate
 $\psi\phi\psi^{-1}$, which is defined by
 $(\psi\phi\psi^{-1})(\gamma) \coloneqq \psi\cdot\phi(\gamma)\cdot\psi^{-1}$.
 In particular, if the image of $\phi$ commutes with $\psi$, then $\psi$
 preserves the cut locus of $\phi$ as an unordered collection of isotopy classes
 of oriented simple closed curves.
\end{lem}
\begin{proof}
  The proof is the same as for \iref{Lemma}{lem:cutlocusequiv}, with the
  exception that we first lift $\phi$ to a homomorphism
  $\tilde\phi\colon G\to \Gamma(\caS,U)$, where $U$ is a small neighbourhood of
  $\alpha_0\cup\dots\cup\alpha_k\cup\del\caS$, and then we compare this morphism
  with the conjugate
  \mbox{$\psi\tilde\phi\psi^{-1}\colon G\to\Gamma(\caS,\Psi(U))$.}
\end{proof}

This shows that, given a homomorphism $\phi\colon G\to\Gamma(\caS,\del\caS)$, we
can associate with it a cut locus $c_1,\dots,c_h$ separating $\caS$ into a white
region $W$ and a yellow region $Y$. We fix a parametrisation by $S^1$ of each
curve in the cut locus as in \iref{Subsection}{subsec:similar}.

We can also lift uniquely $\phi$ to a morphism
$\tilde\phi\colon G\to\Gamma(\caS,W)$. For each component $P\subset Y$, we then
have a morphism $\phi_P\colon G\to \Gamma(P,\del P)$ obtained by composing
$\tilde\phi$ with the restriction $\Gamma(\caS,W)\to\Gamma(P,\del P)$.
Mimicking \iref{Definition}{defi:similar}, we say that two path components $P$
and $P'$ of $Y$ are \emph{similar for $\phi$} if there is a a diffeomorphism
$\Xi\colon P\to P'$ preserving the boundary parametrisations and conjugating the
homomorphism $\phi_P$ to $\phi_{P'}$.  We write
\mbox{$Y=\coprod_{i=1}^r\coprod_{j=1}^{s_i}Y_{i,j}$} as in
\iref{Notation}{nota:Yi}, and introduce morphisms
\mbox{$\bar\phi_{i,j}\colon G\to\Gamma_{g_i,n_i}$} conjugate to $\phi_{Y_{i,j}}$
in an analogue way.

We denote by $\phi_Y\colon G\to\Gamma(Y,\del Y)$ the composition of $\tilde\phi$
and the restriction map \mbox{$\Gamma(\caS,W)\to\Gamma(Y,\del Y)$,} and denote
by $Z(\phi_Y)\subset\Gamma(Y)$ the subgroup of elements that commute with the
entire image of $\phi_Y$. \iref{Definition}{defi:tZdelphi} and
\iref{Lemma}{lem:epsilon} then carry over to the generalised context.

In \iref{Section}{sec:centralisers}, one only has to replace
‘$\phi\in \Gamma(\caS,\partial\caS)$’ by ‘group homomorphism
\mbox{$\phi\colon G\to \Gamma(\caS,\partial\caS)$}’, as well as ‘representative $\Phi$
of $\phi$ fixing $U$’ by ‘the unique lift of $\phi$ to a homomorphism
$\tilde\phi\colon G\to \Gamma(\caS,U)$’.

Finally, the discussion of \iref{Section}{sec:LM1asFreeAlg} generalises
straightforwardly to the context of a generic parametrising space $X$, leading
to \iref{Theorem}{theo:maingeneralised}.

\subsection{Functoriality in \texorpdfstring{$X$}{X}}

Both sides of the equivalence in \iref{Theorem}{theo:maingeneralised} depend on
a space $X$, and the left-hand side can be regarded as an enriched,
contravariant functor from the category of (connected) topological spaces to the
category of loop spaces. In fact each space $\Omega B(\on{map}(X,\frM_{*,1}))$
admits a natural infinite loop space structure, and for a map of spaces
$f\colon X\to X'$, the corresponding map
\[\Omega B(\on{map}(f,\frM_{*,1}))\colon
  \Omega B(\on{map}(X',\frM_{*,1}))\to \Omega B(\on{map}(X,\frM_{*,1}))\]
is defined as the group completion of a map of $\scM|_1$-algebras, and hence a
map of infinite loop spaces. We can thus consider
$\Omega B(\on{map}(-,\frM_{*,1}))$ as a functor from connected topological
spaces to infinite loop spaces.

Now it would be interesting to describe this functor solely in terms of the
right-hand side, which means: to upgrade, in a non-tautological way, the assignment\linebreak
\mbox{$X\mapsto \Omega^\infty\mathbf{MTSO}(2) \times
\Omega^\infty\Sigma^\infty_+\coprod_{n\ge 1} \coprod_{g\ge 0} \frC_{g,n}(X) \qq
R_n$} to an enriched functor from topological spaces to infinite loop spaces,
such that the equivalences given by \iref{Theorem}{theo:maingeneralised}, for
varying $X$, assemble into a natural equivalence of functors. We will content
ourselves with a weaker, but explicit result about functoriality.

\begin{defi}
  \label{defi:deltafaithful}
  A map of connected topological spaces $f\colon X\to X'$ induces a map of
  fundamental groups $f_*\colon\pi_1(X)\to\pi_1(X')$, which is defined up to
  conjugation and which, in turn, allows us to transform any homomorphism
  $\phi\colon\pi_1(X')\to\Gamma_{g,n}$ into a homomorphism
  $f^*\phi=\phi\circ f_*\colon\pi_1(X)\to\Gamma_{g,n}$, for all $g\ge0$ and
  $n\ge1$.
  
  We say that $f$ is \emph{$\del$-faithful} if $f^*\phi$ is $\del$-irreducible
  whenever $\phi$ is $\del$-irreducible, for all $g\ge0$ and $n\ge1$.  We denote
  by $\mathbf{Top}^\del$ the (topologically enriched) category of connected
  topological spaces that are homotopy equivalent to a \acr{CW} complex, with morphisms
  being $\del$-faithful continuous maps.
\end{defi}
Note that being $\del$-faithful is a homotopy-invariant property of continuous
maps, i.e.\ morphism spaces in $\mathbf{Top}^\del$ are obtained by selecting
unions of con\-nec\-ted components from the morphism spaces in $\mathbf{Top}$.
Recall \iref{Definition}{defi:deltairreduciblegeneralised}: if $f\colon X\to X'$
is a $\del$-faithful map, then we obtain for each $g\ge0$ and $n\ge1$ a map
\[\frC_{g,n}(f)\colon\frC_{g,n}(X')\to\frC_{g,n}(X);\]
Thus we can consider $\frC_{g,n}$ as a contravariant functor from
$\mathbf{Top}^\del$ to $\mathbf{Top}$, for all $g\ge0$ and $n\ge1$.
Putting together all values of $g$ and $n$, and taking the actions of the groups
$R_n$ into account, we can consider $\frC$ as a contravariant functor from
$\mathbf{Top}^\del$ to $\Alg{\bm{R}}$. We can then repeat the proof of
\iref{Theorem}{theo:isfreealg} and obtain an equivalence of (contravariant,
enriched) functors
\[F^\scM_{\bm{R}}(\frC(-))_1
  \simeq \on{map}(-,\scM_{*,1})\colon \mathbf{Top}^\del\to \Alg{\scM|_1}.\]
Composing with the group completion functor, we then obtain the following
enriched version of \iref{Theorem}{theo:maingeneralised}.

\begin{theo}
 \label{theo:maingeneralisedfunctorial}
 Let $\Omega\text{-}\mathbf{Top}$ and $\Omega^\infty\text{-}\mathbf{Top}$ denote
 the (enriched) categories of loop spaces and infinite loop spaces respectively. 
 Then there is is a weak equivalence between the following two enriched
 contravariant functors from $\mathbf{Top}^\del$ to
 $\Omega\text{-}\mathbf{Top}$:
 \begin{itemize}
  \item the functor $\Omega B(\on{map}(-,\frM_{*,1}))$;
  \item the composition of the contravariant functor
    \[\Omega^\infty\mathbf{MTSO}(2)
      \times \Omega^\infty\Sigma^\infty_+\coprod_{n\ge 1}
      \coprod_{g\ge 0} \frC_{g,n}(-) \qq R_n\]
    from $\mathbf{Top}^\del$ to $\Omega^\infty\text{-}\mathbf{Top}$ and the
    covariant, forgetful functor from $\Omega^\infty\text{-}\mathbf{Top}$ to
    $\Omega\text{-}\mathbf{Top}$.
  \end{itemize}
  The weak equivalence of functors assigns to $X\in \mathbf{Top}^\del$ the weak
  equivalence of loop spaces given by \iref{Theorem}{theo:maingeneralised}.
\end{theo}

\begin{expl}
  We briefly discuss an example showing why
  \iref{Theorem}{theo:maingeneralisedfunctorial} cannot be generalised to an
  analogue statement concerning the entire category $\mathbf{Top}$ of connected
  topological spaces with the homotopy type of a \acr{CW} complex, together with
  \emph{all} continuous maps: let $X$ be a connected \acr{CW} complex and let
  $f\colon*\to X$ be the inclusion of a point. We note that
  $\coprod_{n\ge 1}\coprod_{g\ge 0} \frC_{g,n}(*)$ is the empty space, whence
  $\Omega^\infty\Sigma^\infty_+\coprod_{n\ge 1} \coprod_{g\ge 0} \frC_{g,n}(*)
  \qq R_n$ is just a point A straightforward generalisation of
  \iref{Theorem}{theo:maingeneralisedfunctorial} would predict that the
  following square commutes up to homotopy, where the horizontal isomorphisms
  are given by \iref{Theorem}{theo:maingeneralised}:
  \[\begin{tikzcd}
      \Omega B(\on{map}(X,\frM_{*,1}))\ar[r,"\simeq"]
      \ar[d,"{\Omega B(\on{map}(f,\frM_{*,1}))}"] &
      \Omega^\infty\mathbf{MTSO}(2)\times \Omega^\infty\Sigma^\infty_+\coprod_{n\ge 1}
      \coprod_{g\ge 0} \frC_{g,n}(X) \qq R_n
      \ar[d,"{\mathrm{pr}_{\Omega^\infty\mathbf{MTSO(2)}}}"]\\
      \Omega B(\on{map}(*,\frM_{*,1}))\ar[r,"\simeq"]&
      \Omega^\infty\mathbf{MTSO}(2).
    \end{tikzcd}\]
  However, we can apply the functor $\pi_0(-)$, taking values in groups, as all
  terms in the previous diagram are loop spaces.  Applying $\pi_0$ to the top
  row, we obtain the group
  $\bbZ\oplus(\bigoplus_{n\ge1}\bigoplus_{g\ge0}\bbZ^{\oplus\pi_0(\frC_{g,n}(X))})$,
  whereas applying $\pi_0$ to the bottom row, we obtain the group $\bbZ$.

  After applying $\pi_0$, of the left vertical map sends the first summand
  $\bbZ$ isomorphically onto $\bbZ$, and the standard generator of each further
  summand $\bbZ^{\oplus\pi_0(\frC_{g,n}(X))}$ to $g+n-1\in\bbZ$, while the right
  vertical map sends the first summand $\bbZ$ isomorphically onto $\bbZ$, and
  each further summand $\bbZ^{\oplus\pi_0(\frC_{g,n}(X))}$ constantly to $0$.
  
  Note that it suffices to take $X=S^1$ to have
  $\pi_0(\frC_{g,n}(X))\neq\emptyset$ for some choice of $g$ and $n$, and thus
  to ensure that the previous example is not void: see
  \iref{Example}{expl:boundarydehn}.
\end{expl}

As an application of \iref{Theorem}{theo:maingeneralisedfunctorial}, let
$k\in\bZ\setminus\set{0}$ and consider the power map $(-)^k\colon S^1\to S^1$,
which induces multiplication by $k$ on the fundamental group
$\pi_1(S^1)\cong \bZ$. The following lemma proves that $(-)^k$ is
$\del$-faithful; hence it induces a self map of the infinite loop space
$\Omega^\infty\mathbf{MTSO}(2) \times \Omega^\infty\Sigma^\infty_+\coprod_{n\ge
  1} \coprod_{g\ge 0} \mathfrak C_{g,n} \qq R_n$ that can be described using
\iref{Theorem}{theo:maingeneralisedfunctorial}.

\begin{lem}
 \label{lem:powerS1}
 Let $\caS$ be a surface of type $\Sigma_{g,n}$ for some $g\ge0$ and
 $n\ge1$. Let $\phi\in\Gamma(\caS,\del\caS)$ be a mapping class, let
 $k\in\bZ\setminus\set{0}$, and let $\bar\alpha$ be an essential arc in
 $\caS$. Suppose that the isotopy class $[\bar\alpha]$ of $\bar\alpha$ relative
 to its endpoints is in the fixed-arc complex of $\phi^k$. Then $[\bar\alpha]$
 is in the fixed-arc complex of $\phi$.
\end{lem}
\begin{proof}
  Since the fixed-arc complexes of $\phi$ and $\phi^{-1}$ are equal, we can
  assume without loss of generality that $k\ge1$. The case $k=1$ is
  tautological, so we henceforth assume $k\ge2$.
  Let $p,q\in\del\caS$ be the endpoints of $\bar\alpha$ and consider the set
  $\mathrm{Arc}(p,q)$ of isotopy classes $[\alpha]$ of essential arcs $\alpha$
  in $\caS$ from $p$ to $q$. An essential arc $\alpha$ is a map
  $\alpha\colon [0,1]\to\caS$, but we will abuse notation and denote by $\alpha$
  also the image of this map, which is a subset of $\caS$.
 
  Let $\alpha$ and $\beta$ be two essential arcs from $p$ to $q$. We say that
  $\alpha$ and $\alpha'$ are \emph{transverse} if they have linearly independent
  velocities at each intersection point, including the endpoints $p$ and
  $q$. Moreover, we say that $\alpha$ and $\alpha'$ are in \emph{minimal
    position away from the endpoints} if the number of intersection points of
  $\alpha$ and $\alpha'$, excluding $p$ and $q$, is minimal among all pairs of
  transverse arcs $\alpha'$ and $\beta'$ with $\alpha\sim\alpha'$ and
  $\beta\sim\beta'$. The bigon criterion applies: $\alpha$ and $\beta$ are in
  minimal position if and only if they do not form bigons.
  In particular, if $\alpha$ and $\beta$ are non-isotopic arcs from $p$ to $q$
  in minimal position away from the endpoints, then we can compare the
  velocities of $\alpha$ and $\beta$ at $p$: these are linearly independent
  vectors in $T_p\caS$ pointing inside $\caS$: we say that \emph{$\alpha$ is on
    right of $\beta$} if the sequence
  $(\frac{d}{dt}\alpha(0),\frac{d}{dt}\beta(0))$ is a positive basis of
  $T_p\caS$.
 
  The Alexander method guarantees the following: let
  $\alpha,\alpha',\beta,\beta'$ be four essential arcs in $\caS$ from $p$ to
  $q$, with $\alpha\sim\alpha'\not\sim\beta\sim\beta'$, and suppose that
  $\alpha$ and $\beta$, as well as $\alpha'$ and $\beta'$, are in minimal
  position away from their endpoints. Then there is an isotopy of $\caS$
  bringing $\alpha\cup\beta$ to $\alpha'\cup\beta'$. In particular $\alpha$ is
  on right of $\beta$ if and only if $\alpha'$ is on right of $\beta'$.
 
  We can now put a total order on $\mathrm{Arc}(p,q)$: for two distinct isotopy
  classes $[\alpha]$ and $[\beta]$ we say that $[\alpha]\prec[\beta]$ if, for
  any two representatives $\alpha$ and $\beta$ which are in minimal position
  away from the endpoints, $\alpha$ is on right of $\beta$.
  The action of $\Gamma(\caS,\del\caS)$ on $\mathrm{Arc}(p,q)$ preserves the
  total order $\prec$: for this, let $\Phi$ be a diffeomorphism representing
  $\phi\in \Gamma(\caS,\del\caS)$ and assume that $\Phi$ fixes a neighbourhood
  of $p\in\del\caS$ pointwise. Moreover, let $\alpha$ and $\beta$ represent
  classes $[\alpha],[\beta]\in\mathrm{Arc}(p,q)$, and assume that $\alpha$ and
  $\beta$ are in minimal position away from the endpoints and that $\alpha$ is
  on right of $\beta$, thus witnessing $[\alpha]\prec[\beta]$: then
  $\Phi(\alpha)$ and $\Phi(\beta)$ are also in minimal position away from the
  endpoints, and $\Phi(\alpha)$ is on right of $\Phi(\beta)$, witnessing that
  $\phi([\alpha])=[\Phi(\alpha)]\prec[\Phi(\beta)]=\phi([\beta])$.
 
  Let now $[\bar\alpha]$ be as in the hypothesis of the theorem, and assume for
  the sake of contradiction that
  $[\bar\alpha]\neq\phi([\bar\alpha])\in \mathrm{Arc}(p,q)$: then without loss
  of generality we can assume $[\bar\alpha]\prec\phi([\bar\alpha])$. We then
  have a chain of inequalities
  \[[\bar\alpha]\prec\phi([\bar\alpha])\prec\phi^2([\bar\alpha])\prec\dots
    \prec\phi^{k-1}([\bar\alpha])\prec\phi^k([\bar\alpha]),\]
  hence $[\bar\alpha]\prec\phi^k([\bar\alpha])$, contradicting the hypothesis
  $[\bar\alpha]=\phi^k([\bar\alpha])$. 
\end{proof}

\section{Braid groups, symmetric groups, and free loops}
\label{apx:braidSpaces}
In this second appendix, we sketch two results that are parallel to the
identification in \iref{Theorem}{theo:main}.

The first pertains to automorphisms of surfaces that have no genus but instead
punctures; we thus replace mapping class groups by braid groups. A particular
model of the corresponding classifying space is given by the collection of
unordered configuration spaces of the 2-dimensional disc.

In analogy to that, the collection of configuration spaces of the
$\infty$-dimensio\-nal disc is a model for the classifying space of symmetric
groups. We outline the corresponding result for that setting.

\subsection{Braid groups}

First recall that the $r$\textsuperscript{th} braid group $\BrG_r$ is
 the fundamental group of the unordered configuration space
$C_r(\mathring{D}^2) \coloneqq \tilde{C}_r(\mathring{D}^2)/\frS_r$ of the
2-dimensional disc, where
\[\tilde{C}_r(\mathring{D}^2) \coloneqq \set{(p_1, \dots, p_r)\in
    \mathring{D}^2;\, p_i \neq p_j \text{ for all } i \neq j}.\]
The contractible topological group $\Diff_{\partial}(D^2)$ of diffeomorphisms of
$D^2$ that are the identity in a neighbourhood of the boundary acts on
$C_r(\mathring{D}^2)$; the stabiliser of a point
$P \coloneqq \{p_1, \dots, p_r\} \in \UConf_r(D^2)$ with respect to this this
action is the subgroup $\Diff_{\partial}(D^2,P)$ of diffeomorphisms that
preserve $P$ as a set. The induced long exact sequence on homotopy groups of the
resulting fibration
\[\Diff_{\partial}(D^2,P) \to \Diff_{\partial}(D^2)
  \to C_r(\mathring{D}^2) \]
thus implies that $\BrG_r \cong \pi_0 \Diff_{\partial}(D^2,P)$, the mapping
class group of a genus $0$ surface with $r$ (unordered) punctures. A classical
argument by Fadell and Neuwirth \cite{FadellNeuwirth} shows that the
configuration spaces $\tilde{C}_r(\mathring{D}^2)$ and $C_r(\mathring{D}^2)$ are
aspherical, which implies the well-known result
$C_r(\mathring{D}^2) \simeq B \BrG_r$.

We remark that Alexander's theory of arc systems developed in detail for
surfaces in \iref{Section}{sec:arcSystems}, \iref{Definitions}{defn:arc},
\ref{defi:fixedarccomplex}, \ref{defi:cutlocus},
\iref{Propositions}{prop:Alexandermethod} and \ref{prop:cutlocus},
\iref{Construction}{constr:cutlocus}, and \iref{Lemma}{lem:cutlocus}, has a
canonical analogue if the surfaces with boundary considered before are replaced
by a punctured disc: now the arcs are required to start and end in
$\partial D^2 = S^1$ and their interior must lie in
$\mathring{D}^2 \backslash \{p_1, \dots, p_r\}$. Most importantly, we can also
define the notion of irreducibility of elements of braid groups as follows.

\begin{defi}
  \label{defi:Br_irreducible}
  \begin{enumerate}
  \item A braid $\gamma \in \BrG_r$ is \emph{reducible} if there exists an
    essential arc in $\mathring{D}^2 \backslash \{p_1, \dots, p_r\}$ that is
    fixed by $\gamma$; otherwise it is called \emph{irreducible}. In other
    words, a braid $\gamma$ is reducible if it is conjugate to a block sum of
    braids; see \iref{Figure}{fig:braidred}.
  \item Let
    $\mathfrak I_r \subseteq \Lambda C_r(\mathring{D}^2)=\Lambda B\mathrm{Br}_r$
    denote the subspace of free loops whose corresponding conjugacy classes in
    $\BrG_r$ consist of irreducible elements.
  \end{enumerate}
\end{defi}

\begin{figure}
  \centering
  \begin{tikzpicture}[yscale=.9]  
    \draw (1,3) to[out=-90,in=90] (2,1.5) to[out=-90,in=90] (1,0);
    \draw[white,line width=3mm](2,3) to[out=-90,in=90] (0,0);  
    \draw (2,3) to[out=-90,in=90] (0,0);
    \draw[white,line width=3mm] (0,3) to[out=-90,in=90] (2,0);
    \draw (0,3) to[out=-90,in=90] (2,0);
    \draw (7,3) -- (7,0);
    \draw (6,0) -- (6,3);
    \draw[white,line width=3mm] (5,3) to[out=-90,in=90] (7.25,1.5) to[out=-90,in=90] (5,0);
    \draw (5,3) to[out=-90,in=90] (7.25,1.5) to[out=-90,in=90] (5,0);
    \draw[white,line width=3mm] (7,1.5) -- (7,0);
    \draw[white,line width=3mm] (6,1.5) -- (6,0);
    \draw (7,1.5) -- (7,0);
    \draw (6,1.5) -- (6,0);
    \node at (0,3) {\tiny $\bullet$};
    \node at (0,0) {\tiny $\bullet$};
    \node at (1,3) {\tiny $\bullet$};
    \node at (1,0) {\tiny $\bullet$};
    \node at (2,3) {\tiny $\bullet$};
    \node at (2,0) {\tiny $\bullet$};
    \node at (7,3) {\tiny $\bullet$};
    \node at (7,0) {\tiny $\bullet$};
    \node at (5,3) {\tiny $\bullet$};
    \node at (5,0) {\tiny $\bullet$};
    \node at (6,3) {\tiny $\bullet$};
    \node at (6,0) {\tiny $\bullet$};
    \node at (0,3.3) {\tiny $1$};
    \node at (1,3.3) {\tiny $2$};
    \node at (2,3.3) {\tiny $3$};
    \node at (5,3.3) {\tiny $1$};
    \node at (6,3.3) {\tiny $2$};
    \node at (7,3.3) {\tiny $3$};
    \node at (0,-.3) {\tiny $1$};
    \node at (1,-.3) {\tiny $2$};
    \node at (2,-.3) {\tiny $3$};
    \node at (5,-.3) {\tiny $1$};
    \node at (6,-.3) {\tiny $2$};
    \node at (7,-.3) {\tiny $3$};  
  \end{tikzpicture}
  \caption{The left braid is reducible as it is conjugate to a block sum
    of two braids, whereas the second one is not.}\label{fig:braidred}
\end{figure}
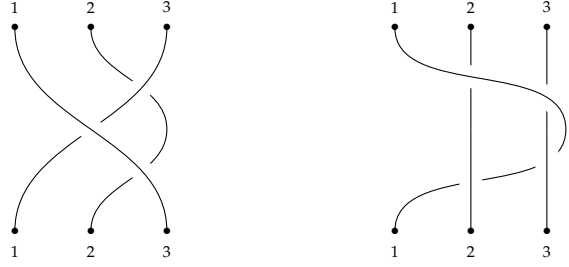

The genus-0 and colour-1 part of $\scM$, which coincides with the \emph{framed}
little 2-discs operad $\smash{\scD_2^{\text{fr}}}$ and acts on the union
$\Lambda \coprod_{r\ge 0}C_r(\mathring{D}^2)$. The Lie group $R_1\cong S^1$ acts
on $\frI\coloneqq \coprod_{r\ge 1} \frI_r$, and analogous to
\iref{Theorem}{theo:isfreealg} one obtains that
\[\Lambda \coprod_{r\ge 0}C_r(\mathring{D}^2) \simeq
  F_{S^1}^{\smash{\scD_2^{\mathrm{fr}}}}(\frI).\]

The free $\smash{\scD_2^{\mathrm{fr}}}$-algebra over a given $S^1$-space is, as
a $\scD_2$-algebra, equivalent to the free $\scD_2$-algebra over the underlying
space without the $S^1$-action as the operation spaces $\scD^{\mathrm{fr}}_2(r)$
and $\scD_2(r)\times (S^1)^r$ are equivalent as free right $(S^1)^r$-spaces,
whence the coend construction just cancels the toric factor. We therefore
obtain, after group completion, the following identification.

\begin{theo}\label{theo:app1} There is a homotopy equivalence
  \[\Omega B\Lambda \coprod_{n \ge 1} B \BrG_r \simeq
    \Omega^2 \Sigma_+^2 \coprod_{r\ge 1}
    \coprod_{\substack{[\gamma] \in \on{Conj}(\BrG_r)\\ \text{irreducible}}}
    B Z(\gamma,\BrG_r).\]
\end{theo}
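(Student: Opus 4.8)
The plan is to run the braid-group analogue of the proof of \tref{Theorem}{theo:main}, replacing surfaces with boundary by punctured discs throughout. As recorded in the discussion preceding the statement, \tref{Definitions}{defn:arc}, \ref{defi:fixedarccomplex}, \ref{defi:cutlocus}, together with \tref{Proposition}{prop:Alexandermethod}, \tref{Lemma}{lem:cutlocus} and \tref{Proposition}{prop:cutlocus}, admit verbatim analogues for $\mathring D^2\setminus\{p_1,\dots,p_r\}$, where arcs now run between points of $\del D^2$ and avoid the punctures. The first step is to check these analogues and, for each $\gamma\in\BrG_r$, extract the canonical decomposition of $\mathring D^2\setminus\{p_1,\dots,p_r\}$ along the cut locus of $\gamma$ into a \emph{white} region $W$ and a \emph{yellow} region $Y$. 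The feature special to the disc is that cutting a disc along a system of disjoint arcs produces only disc-shaped cells; consequently $W$ is a connected planar surface with exactly one outgoing boundary curve (namely $\del D^2$), with $h\ge 1$ inner boundary curves and \emph{no} punctures, while every connected component of $Y$ is a once-bounded punctured disc, carrying — by the analogue of \tref{Lemma}{lem:phiiirreducible} — a braid that is irreducible in the sense of \tref{Definition}{defi:Br_irreducible}. This is exactly why the operad appearing below is \emph{monochromatic}, in contrast with the coloured operad $\scM$.

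The second step is the structure theorem for braid centralisers, obtained by running \tref{Section}{sec:centralisers} in this setting: grouping the yellow components into similarity classes as in the analogue of \iref{Notation}{nota:Yi}, one writes $Y=\coprod_{i=1}^{r'}\coprod_{j=1}^{s_i}Y_{i,j}$ with $Y_{i,j}$ carrying the irreducible braid $\bar\gamma_i\in\BrG_{m_i}$ and $h=\sum_i s_i$, and obtains the analogue of \tref{Proposition}{prop:Zdelphistructure}: since $W$ is connected there is an isomorphism
\[
  Z(\gamma,\BrG_r)\;\cong\;\Big(\Gamma^{\frS_h}(W)\times^{\frS_h}\prod_i Z(\bar\gamma_i)\wr\frS_{s_i}\Big)\Big/\Z^h,
\]
with $\Z^h$ freely generated by the couples $(D_{c_i},D_{c_i}^{-1})$ of Dehn twists about the $h$ curves $c_i$ bounding the yellow components. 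One point to keep track of is that braid groups already use \emph{unordered} punctures, so no "extended" mapping class group is needed on the yellow side; otherwise the identification of the kernel and the surjectivity of the gluing map go through exactly as in \tref{Section}{sec:centralisers}, using the disconnected form of \cite[Thm.\,3.18]{FarbMargalit}.

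With this group-theoretic input in hand one passes to classifying spaces as in \tref{Section}{sec:LM1asFreeAlg}. Let $\frI_r\subseteq\Lambda C_r(\mathring D^2)=\Lambda B\BrG_r$ be the union of the components of irreducible monodromy; since $C_r(\mathring D^2)$ is aspherical (Fadell–Neuwirth \cite{FadellNeuwirth}), $\frI_r\simeq\coprod_{[\gamma]\text{ irr}}BZ(\gamma,\BrG_r)$, and $\frI=\coprod_{r\ge1}\frI_r$ carries the $R_1\cong S^1$-action. The genus-$0$, colour-$1$ part $\scD_2^{\mathrm{fr}}$ of $\scM$ acts on $\coprod_{r\ge0}C_r(\mathring D^2)$ by inserting a punctured disc into each hole of a planar surface, hence acts on the free loop space. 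Decomposing both sides into connected components, one matches, for fixed $\gamma$, the corresponding component of $F^{\scD_2^{\mathrm{fr}}}_{S^1}(\frI)$ with $\Lambda C_r(\mathring D^2)(\gamma)$ via the gluing map, whose effect on fundamental groups is exactly the surjection $\epsilon$ of \tref{Section}{sec:centralisers} with kernel $\Z^h$ coming from the torus $T^h$ in $\scD_2^{\mathrm{fr}}$, precisely as in the proof of \tref{Theorem}{theo:isfreealg}. This yields the equivalence $\Lambda\coprod_{r\ge0}C_r(\mathring D^2)\simeq F^{\scD_2^{\mathrm{fr}}}_{S^1}(\frI)$.

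Finally, since $\scD_2^{\mathrm{fr}}=\scD_2\otimes S^1$ and the relative suboperad is $\scR=\scB\otimes S^1$, a base-change identity of the type of \tref{Lemma}{lem:freedeg} identifies $F^{\scD_2^{\mathrm{fr}}}_{S^1}(\frI)$, regarded as a $\scD_2$-algebra — the residual $S^1$-action affects neither the homotopy type nor the group completion, which depends only on the underlying $\scD_2$-structure — with the free $\scD_2$-algebra $F^{\scD_2}(\frI_+)$. As $\scD_2(0)$ is a point there is no initial-algebra summand, so the group completion of the free $\scD_2$-algebra on a based space (by the group completion theorem \cite{SegalMcDuff}) gives
\[
  \Omega B\Lambda\coprod_{r\ge0}C_r(\mathring D^2)\;\simeq\;\Omega B F^{\scD_2}(\frI_+)\;\simeq\;\Omega^2\Sigma^2(\frI_+)\;=\;\Omega^2\Sigma^2_+\,\frI,
\]
and substituting $\frI\simeq\coprod_{r\ge1}\coprod_{[\gamma]\text{ irr}}BZ(\gamma,\BrG_r)$ gives \tref{Theorem}{theo:app1}. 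The main obstacle is the first half of the programme: genuinely verifying that the arc-system machinery of \tref{Section}{sec:arcSystems} and the centraliser analysis of \tref{Section}{sec:centralisers} transfer to punctured discs — although this is "canonically analogous", each of the relevant statements must be re-inspected, and one must make and exploit the observation that the cells of a disc cut along arcs are again discs, which forces monochromaticity. Once that is done, the passage to classifying spaces and the group completion are routine adaptations of \tref{Section}{sec:LM1asFreeAlg} and \tref{Example}{ex:relfreeM}.
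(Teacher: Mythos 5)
Your proposal is correct and follows essentially the same route as the paper's own Appendix B sketch: transfer the arc-system and cut-locus machinery to punctured discs, observe that the cut locus forces each yellow piece to be a once-bounded punctured disc (so the operad becomes the monochromatic $\scD_2^{\mathrm{fr}}$, the genus-$0$ colour-$1$ part of $\scM$), deduce $\Lambda\coprod_r C_r(\mathring D^2)\simeq F^{\scD_2^{\mathrm{fr}}}_{S^1}(\frI)$ as in \tref{Theorem}{theo:isfreealg}, and then use $\scD_2^{\mathrm{fr}}=\scD_2\otimes S^1$ together with a \tref{Lemma}{lem:freedeg}-type base-change to reduce to the free $\scD_2$-algebra on $\frI$, whose group completion is $\Omega^2\Sigma^2_+\frI$. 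Your write-up merely spells out the centraliser structure theorem for braids and the monochromaticity observation in more detail than the paper's condensed appendix, but these are exactly the points the paper indicates with the phrase ``analogous to \tref{Theorem}{theo:isfreealg}''.
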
  

\subsection{Symmetric groups}

We conclude by considering free loop spaces of configuration spaces of an
infinite-dimensional disc $\mathring{D}^{\infty}$. These are classifying spaces
of symmetric groups, i.e. $\smash{C_r(\mathring{D}^{\infty}) \simeq
  B\fS_r}$. The analogous notion of irreducibility is even simpler: an element
of $\fS_r$, i.e.\ a permutation, is irreducible if and only if it comprises a
single cycle. In this case its centraliser is cyclic of order $r$, and as above
one obtains that
\[\Lambda \coprod_{r\ge 0} B\fS_r \simeq F^{\scD_{\infty}} \coprod_{k\ge 1} B(\mathbb Z/k).\]
After group completion we obtain the analogue of
\iref{Theorem}{theo:app1} for symmetric groups \cite[Cor.\,4.32]{JR_thesis}.

\begin{theo}\label{theo:app2} There is a homotopy equivalence
  \[\Omega B\Lambda \coprod_{r\ge 0}  B\fS_r \simeq
    \Omega^{\infty} \Sigma_+^{\infty} \coprod_{k\ge 1} B(\mathbb Z/k).\]
\end{theo}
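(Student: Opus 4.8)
The plan is to follow exactly the same blueprint as for Theorems~\ref{theo:main} and \ref{theo:app1}, but now in the much simpler world of symmetric groups, where all the hard operadic input is either trivial or already available. First I would set up the algebraic picture: the collection $\coprod_{r\ge0}B\fS_r$, realised as $\coprod_r C_r(\mathring D^\infty)$, is an $E_\infty$-algebra, i.e.\ an algebra over $\scD_\infty$, with the multiplication given by ``juxtaposition'' of configurations in disjoint half-spaces of $\mathring D^\infty$. Applying the free loop functor $\Lambda$ pointwise gives a $\scD_\infty$-algebra structure on $\Lambda\coprod_r B\fS_r$.

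\medskip

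The main structural step is the analogue of \tref{Theorem}{theo:isfreealg}: I would show that $\Lambda\coprod_{r\ge0}B\fS_r$ is the free $\scD_\infty$-algebra on the space $\coprod_{k\ge1}B(\Z/k)$. Here ``irreducibility'' of a permutation means being a single $k$-cycle; an arbitrary permutation is conjugate to a disjoint union (block sum) of cycles, and the decomposition into cycles is canonical, which is the shadow of the cut-locus construction of \tref{Section}{sec:arcSystems} in this degenerate setting. Concretely, using $\Lambda B\fS_r\simeq\coprod_{[\sigma]\in\on{Conj}(\fS_r)}BZ(\sigma,\fS_r)$ and the fact that for $\sigma$ a product of cycles of lengths $k_1,\dots,k_m$ (with multiplicities) one has $Z(\sigma,\fS_r)\cong\prod_k(\Z/k)\wr\fS_{m_k}$, one identifies the connected component indexed by $[\sigma]$ with the corresponding component of the free $\scD_\infty$-algebra on $\coprod_{k\ge1}B(\Z/k)$: the component of $F^{\scD_\infty}(\coprod_k B(\Z/k))$ indexed by an unordered tuple of cycle types is $\scD_\infty(m)\times_{\fS_{m_1}\times\dots}\prod_k (B(\Z/k))^{m_k}$, which (since $\scD_\infty(m)$ is contractible) is a $B(\prod_k(\Z/k)\wr\fS_{m_k})$. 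This is exactly the $E_\infty$ analogue of \tref{Lemma}{lem:FscMphiconnected} and the Proposition following it, and there is no twisting by a torus $T^n$ and no ``white region'' $W$ to keep track of, because a cycle leaves no essential arc to cut along and the ambient disc $\mathring D^\infty$ is contractible — so the bookkeeping collapses entirely. Since $\scD_\infty$ is an $A_\infty$ (indeed $E_\infty$) operad with homological stability and its group completion is $\Omega^\infty\Sigma^\infty_+$ of the generators, \cite{BasterraEtAl} (or \tref{Corollary}{cor:maincor} with the monochromatic operad $\scD_\infty$, trivial group family, and the trivial degree monoid $\caA=0$, so that $\Omega B\scO\binom{}{\bullet}$ is a point) gives
\[
  \Omega B\,\Lambda\coprod_{r\ge0}B\fS_r
  \;\simeq\; \Omega B\,F^{\scD_\infty}\!\Big(\coprod_{k\ge1}B(\Z/k)\Big)
  \;\simeq\; \Omega^\infty\Sigma^\infty_+\coprod_{k\ge1}B(\Z/k),
\]
which is the claimed equivalence.

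\medskip

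I expect the only genuinely delicate point to be the identification of $\Lambda\coprod_r B\fS_r$ with the free $\scD_\infty$-algebra on $\coprod_k B(\Z/k)$ \emph{as $\scD_\infty$-algebras}, matching the component decompositions compatibly with the stabilisation maps; all the group-theoretic inputs (centralisers of permutations, uniqueness of cycle decomposition) are elementary and classical, and can be quoted or proved in a line. Everything else — the asphericity of configuration spaces of $\mathring D^\infty$, the group-completion theorem for $E_\infty$-algebras, the computation $\Omega B(\coprod_r B\fS_r)\simeq\Omega^\infty\Sigma^\infty_+(\ast)=Q S^0$ (Barratt--Priddy--Quillen) as the degenerate base case — is standard. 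As noted in the excerpt, this statement already appears as \cite[Cor.~4.32]{JR_thesis}, so I would present the above as a streamlined derivation from \tref{Theorem}{theo:main}'s machinery rather than reproving the input theorems.
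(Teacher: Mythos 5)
Your proposal is correct and follows exactly the paper's route: identify $\Lambda\coprod_{r\ge0} B\fS_r$ as the free $\scD_\infty$-algebra on $\coprod_{k\ge1}B(\Z/k)$ via cycle decomposition and the centraliser formula $Z(\sigma,\fS_r)\cong\prod_k(\Z/k)\wr\fS_{m_k}$, then group-complete using \tref{Corollary}{cor:maincor} with trivial group family and trivial degree monoid. The paper's appendix treatment is a one-line sketch, and you have simply filled in the component-by-component identification of the free algebra that it compresses into ``as above one obtains that''.
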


\printbibliography[heading=bibintoc]

\begin{addr}
  \auth
  {4.7cm}
  {Andrea Bianchi}
  {https://andreabianchi.sites.ku.dk/}
  {Department of Mathematical Sciences\\
    University of Copenhagen\\
    Universitetsparken 5\\
    DK-2100 Copenhagen, Denmark}
  {anbi@math.ku.dk}
\hspace*{1.5mm}
  \auth
  {3.7cm}
  {Florian Kranhold}
  {https://www.math.uni-bonn.de/people/kranhold/}
  {Max Planck Institute\\
    \hspace*{2mm}for Mathematics, Bonn\\
    Vivatsgasse 7\\
    53111 Bonn, Germany}
  {fkranhold@mpim-bonn.mpg.de}
\hspace*{2.1mm}
  \auth
  {4cm}
  {Jens Reinhold}
  {https://sites.google.com/view/jens-reinhold/home}
  {Mathematics Münster\\
  University of Münster\\
    Orléans-Ring 10\\
    48149 Münster, Germany}
  {jens.reinhold@uni-muenster.de jens.reinhold@posteo.de}
  \hspace*{2mm}
\end{addr}

\end{document}